\newtheorem{theor}{Theorem}
\newtheorem{lemma}[theor]{Lemma}
\newtheorem{prop}[theor]{Proposition}
\newtheorem{coro}[theor]{Corollary}
\newtheorem{thm}{Theorem} 
\theoremstyle{remark}
\newtheorem{remark}{Remark}[section]
\newcommand{\na}{\mathbb{N}}
\newcommand{\re}{\mathbb{R}}
\newcommand{\rn}{\mathbb{R}^n}
\newcommand{\ent}{\mathbb{Z}}
\newcommand{\abs}[1]{\vert #1 \vert}
\newcommand{\norm}[2]{\|#1\|_{#2}}
\newcommand{\Sw}{\mathcal{S}(\rn)}
\begin{document}

\subjclass[2010]{Primary 26D10, 31B10, 35S05 47G30; Secondary 42B15, 42B20, 46E35}

\keywords{Bilinear operators, Poincar\'e inequalities, pseudodifferential operators, fractional Leibniz  rules}

\thanks{First author is supported by the ANR under the project AFoMEN no. 2011-JS01-001-01. Second, third, and fourth authors  supported by the NSF under grants DMS 0901587, DMS 1201504, and DMS 1101327, respectively.}

\address{Fr\'ed\'eric Bernicot, CNRS-Universit\'e de Nantes, Laboratoire Jean Leray. 2, rue de la Houssini\`ere
44322 Nantes cedex 3 (France).} \email{frederic.bernicot@univ-nantes.fr }

\address{Diego Maldonado, Department of Mathematics, Kansas State University. 138 Cardwell Hall,
Manhattan, KS-66506 (USA).} \email{dmaldona@math.ksu.edu}

\address{Kabe Moen, Department of Mathematics, University of Alabama, Tuscaloosa, AL-35487-0350
(USA).}\email{kmoen@as.ua.edu}

\address{Virginia Naibo, Department of Mathematics, Kansas State University. 138 Cardwell Hall,
Manhattan, KS-66506 (USA).} \email{vnaibo@math.ksu.edu}

\title{Bilinear Sobolev-Poincar\'e inequalities and Leibniz-type rules}

\author{Fr\'ed\'eric Bernicot, Diego Maldonado, Kabe Moen, and Virginia Naibo}
\date{\today}

\begin{abstract}
The dual purpose of this article is to establish bilinear Poincar\'e-type estimates associated to an approximation of the identity and to explore the connections between bilinear pseudo-differential operators and bilinear potential-type operators. The common underlying theme in both topics is their applications to Leibniz-type rules in Sobolev and Campanato-Morrey spaces under Sobolev scaling.
\end{abstract}

\maketitle

\section{Introduction}\label{sec:intro}

Leibniz-type rules quantify the regularity of a product of functions in terms of the regularity of its factors. In this sense, Leibniz-type rules are represented by inequalities of the form
\begin{equation}\label{leibrule}
\norm{fg}{Z} \lesssim \norm{f}{X_1} \norm{g}{Y_1} + \norm{f}{X_2}\norm{g}{Y_2},
\end{equation}
where $X_1$, $X_2$, $Y_1$, $Y_2$, and $Z$ are appropriate functional spaces. Along these lines, perhaps the better-known Leibniz-type rules correspond to the fractional Leibniz rules, pioneered by Kato-Ponce \cite{KaPo},  Christ-Weinstein \cite{CWein} and Kenig-Ponce-Vega \cite{KPVe} in their work on PDEs, where the spaces $X_1$, $X_2$, $Y_1$, $Y_2$, and $Z$ belong to the scale of Sobolev spaces $W^{m,p}$; namely,
\begin{equation}\label{katoponce}
\norm{fg}{W^{m,q}} \lesssim \norm{f}{W^{m,p_1}} \norm{g}{L^{p_2}} + \norm{f}{L^{p_1}}\norm{g}{W^{m,p_2}},
\end{equation}
where $m \geq 0$ and
\begin{equation}\label{Holderscale}
\frac{1}{q} = \frac{1}{p_1} + \frac{1}{p_2} \quad  \text{ with }  1 < p_1, p_2 < \infty, 1 \leq q.
\end{equation}
The estimates \eqref{katoponce} follow as a consequence of interpolation and the boundedness properties on products of  Lebesgue spaces of bilinear Coifman-Meyer multipliers (\cite{cm,GT}): If $\sigma$ satisfies
\begin{equation}\label{cmmultipliers}
|\partial_\xi^\alpha \partial_\eta^\beta \sigma(\xi, \eta)|\leq C_{\alpha,
\beta} (|\xi|+|\eta|)^{ - (|\alpha|+|\beta|)}, \quad \xi, \eta
\in \re^n,\,\alpha, \,\beta \in \na_0^n,\,|\alpha|+|\beta|\le C_n,
\end{equation}
where $C_n$ is a certain constant depending only on $n,$
and
\[
T_\sigma(f,g)(x) := \int_{\re^{2n}} \sigma(\xi,\eta) \hat{f}(\xi) \hat{g}(\eta) e^{i x \cdot
(\xi + \eta)} \, d\xi \, d\eta, \qquad x\in\re^n, f, g \in \Sw,
\]
then $T_\sigma$ is bounded from $L^{p_1}\times L^{p_2}$ into $L^q$, where $p_1, p_2,$ and $q$ conform to the H\"older scaling \eqref{Holderscale}.
Then, inequalities \eqref{katoponce} are obtained from this result after observing that, by frequency decoupling, the identity
 \begin{equation}\label{freqdecoup}
 J^m (fg)(x)= T_{\sigma_1}(J^m f, g)(x)+ T_{\sigma_2}(f, J^m g)(x),
 \end{equation}
holds true for some bilinear symbols $\sigma_1$ and $\sigma_2$  and
$$
\widehat{J^m(h)}(\xi):=(1+\abs{\xi}^2)^{m/2}\,\hat{h}(\xi), \quad \xi\in\re^n, m > 0, h \in \Sw.
$$
For $m$ sufficiently large, depending only on dimension, the symbols $\sigma_1$ and $\sigma_2$ satisfy  \eqref{cmmultipliers} and interpolation with the case $m=0$ (notice that here $q \geq 1$) gives \eqref{katoponce} for any $m>0$.
Two immediate conclusions can be derived from this approach. First, since the symbol $\sigma_0 \equiv 1$ satisfies \eqref{cmmultipliers} and yields, through $T_{\sigma_0}$, the product of two functions, the H\"older scaling \eqref{Holderscale} occurs naturally. Second, the identity \eqref{freqdecoup} can be exploited to produce Leibniz-type rules \eqref{leibrule} involving function spaces that interact well with  $J^m$ (for example, Besov and Triebel-Lizorkin spaces) provided that mapping properties for bilinear multipliers $T_\sigma$ are established for such spaces. Indeed, implementations of this program (see, for instance, \cite{BerT, GK, MN09b}), produce Besov, Triebel-Lizorkin, and mixed Besov-Lebesgue Leibniz-type rules.

 A Littlewood-Paley-free path towards Leibniz-type rules was introduced in \cite{MMN} in
 the scales of Campanato-Morrey spaces. In this context, the role of
the identity \eqref{freqdecoup} is played by the inequality
\begin{equation}\label{repformbilinear}
|f(x)g(x) - f_B g_B| \lesssim \mathcal{I}_1(|\nabla f| \chi_B, |g| \chi_B) + \mathcal{I}_1(|f| \chi_B, |\nabla g| \chi_B), \quad x \in B,
\end{equation}
where $B \subset \rn$ is a ball, $f,g \in \mathcal{C}^1(B)$, $\mathcal{I}_1$ is a bilinear potential operator, and $f_B:=\frac{1}{\abs{B}}\int_Bf(x)\,dx$. Inequality \eqref{repformbilinear} arises as a bilinear interpretation of the linear inequality
\begin{equation}\label{repformlin}
|f(x) - f_B| \lesssim I_1(|\nabla f| \chi_B), \quad x \in B, \,f \in \mathcal{C}^1(B),
\end{equation}
where $I_1$ denotes the Riesz potential of order 1. Inequality \eqref{repformlin} is usually referred to as a representation formula (for the oscillation $|f(x) - f_B|$). In the linear setting, representation formulas and Poincar\'e inequalities imply embeddings of Campanato-Morrey spaces (see, for instance, \cite{Lu98} for such embeddings in the Carnot-Carath\'eodory framework). As proved in \cite{MMN}, via \eqref{repformbilinear}, the bilinear analogs to these embeddings come in the form of Campanato-Morrey Leibniz-type rules. More precisely, in the scale of Campanato-Morrey spaces ($\mathcal{L}^{p,\lambda}(w)$ and $L^{q,\lambda}(w)$ below), a typical weighted Leibniz-type rule takes the form (see \cite{MMN})
\begin{equation}\label{leibcampanatomorrey}
\norm{fg}{{L}^{q,\lambda}(w)} \lesssim \norm{\nabla f}{\mathcal{L}^{p_1,\lambda_1}(u)} \norm{g}{\mathcal{L}^{p_2,\lambda_2}(v)} +  \norm{ f}{\mathcal{L}^{p_1,\lambda_1}(u)} \norm{\nabla g}{\mathcal{L}^{p_2,\lambda_2}(v)},
\end{equation}
for (a large class of) weights $u, v, w$ and indices $q, \lambda, p_1, \lambda_1$, $p_2$, and $\lambda_2$. In the unweighted case, the natural scaling for \eqref{leibcampanatomorrey} turns out to be the bilinear Sobolev scaling
\begin{equation}\label{Sobolevscale}
\frac{1}{q}  = \frac{1}{p_1} + \frac{1}{p_2} - \frac{1}{n} \quad  \text{ with }  1 < p_1, p_2 < \infty.
\end{equation}
 From \eqref{repformbilinear}, it now becomes apparent that the prevailing tools for obtaining inequalities \eqref{leibcampanatomorrey} rely on boundedness properties of suitable bilinear potential-type operators. Thus, in the scale of Campanato-Morrey spaces, bilinear potential-type operators play the role that paraproducts and the bilinear Coifman-Meyer multipliers play in the proofs of the Sobolev-based Leibniz-type rules \eqref{katoponce} and their Besov and Triebel-Lizorkin counterparts. Accordingly, the time-frequency Fourier-based tools in the latter are replaced by real-analysis methods in the former.

 The purpose of this article is to further develop time-frequency and real-analysis approaches that allow to prove new Leibniz-type rules in Sobolev and Campanato-Morrey spaces.  In the rest of this introduction we feature some of the main results as we explain the organization of the manuscript.

 In  Section \ref{sec:tools} we recall some definitions and known results on boundedness properties of bilinear fractional integrals in weighted and unweighted Lebesgue spaces that will be useful for our proofs.

In Section \ref{sec:bpapproxident} we explore the behavior of the
bilinear oscillation $|f(x)g(x) - f_B g_B|$ when the mean-value
operator is replaced by an approximation of the identity $\{S_t\}$.
Our exposition includes the case of the infinitesimal generator $L$
of an analytic semigroup $\{S_t\}_{t>0}$ on $L^2(\re^n)$ (i.e.
$S_t=e^{-tL}$) whose kernel $p_t(x,y)$ has  fast-enough off-diagonal
decay. The quantity   $S_t f = e^{-tL}f$ can be thought of  as an
average version of $f$ at the scale $t$ and plays the role of $f_B$
for some $t=t_B,$ when defining function spaces, such as $BMO_L$ and
$H^1_L,$ which better capture  properties of the  solutions to $L
u=0;$ see for instance \cite{DY1}. In the linear case, the new study
of Sobolev-Poincar\'e inequalities associated to the oscillation $|f
- S_{t_B}f|$ has been successfully carried out in \cite{BJM, JM}
(see also \cite{BB}), yielding  Sobolev-Poincar\'e type inequalities
such as
\begin{equation}\label{sobpoinexpanded}
\left( \frac{1}{|B|} \int_B |f - S_{t_B}f|^q \right)^{1/q} \lesssim \sum\limits_{k \in \na_0} \alpha_k\, r(2^k B) \left(\frac{1}{|2^k B|} \int_{2^k B} |\nabla f|^p \right)^{1/p},
\end{equation}
for suitable choices of indices $1 < p < q$ and sequences
$\{\alpha_k\} \subset [0, \infty)$. As described in \cite{BJM, JM},
the presence of the series expansion on the right-hand side of
\eqref{sobpoinexpanded} accounts for the lack of localization of the
approximation of the identity $\{S_t\}$. In this vein, we study
bilinear oscillations of the type $|fg - S_{t_B}f S_{t_B}g|$ and
establish bilinear Poincar\'e-type inequalities in the Euclidean
setting associated to a general approximation of the identity
$\{S_t\}$.   We prove:
\begin{theor} \label{thm:mainpoincare} Let ${\mathcal S}:=\{S_t\}_{t>0}$  and ${\mathcal S'}:=\{t\partial_t S_t\}_{t>0}$ be approximations of
the identity in $\re^n$ of order $m>0$ and constant $\varepsilon$ in \eqref{gammadecay},
 $1<p_1,p_2<\infty,$ $q>0,$ and  $0<\alpha <\min\{1,\varepsilon\}$   such that
$\frac{1}{q}=\frac{1}{p_1}+\frac{1}{p_2} - \frac{1-\alpha}{n}.$
\begin{align*}
\lefteqn{\left(\int_B|f g - S_{r(B)^m}(f) S_{r(B)^m}(g)|^q\right)^{1/q} } \\
& &  \lesssim\, r(B)^\alpha \sum_{l\geq 0} 2^{-l(\varepsilon-\alpha)} \left[ \left(\int_{2^{l+1}B} |\nabla f|^{p_1}\right)^{1/p_1} \left(\int_{2^{l+1}B}|g| \right)^{1/p_2}\right. \\
& & +\left. \left(\int_{2^{l+1}B} | f|^{p_1}\right)^{1/p_1} \left(\int_{2^{l+1}B}|\nabla g| \right)^{1/p_2}\right] .
\end{align*}
\end{theor}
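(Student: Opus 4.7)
The plan is to reduce the bilinear oscillation to two linear oscillations via the algebraic identity
\[
fg - S_t(f)\,S_t(g) \;=\; (f - S_t f)\,g + S_t(f)\,(g - S_t g), \qquad t=r(B)^m,
\]
and then estimate each summand by H\"older's inequality on $B$ combined with an order-$\alpha$ fractional analogue of \eqref{sobpoinexpanded}. This linear fractional Sobolev--Poincar\'e inequality, which follows from the methods of \cite{BJM,JM} together with a H\"older interpolation inside $B$, reads
\[
\left(\frac{1}{|B|}\int_B |h - S_t h|^{\tilde p}\right)^{1/\tilde p} \;\lesssim\; r(B)^\alpha \sum_{l\ge 0} 2^{-l(\varepsilon-\alpha)}\left(\frac{1}{|2^{l+1}B|}\int_{2^{l+1}B}|\nabla h|^{p}\right)^{1/p},
\]
with $\frac{1}{\tilde p}=\frac{1}{p}-\frac{1-\alpha}{n}$; the hypothesis $\alpha<\min\{1,\varepsilon\}$ is exactly what makes the series converge.

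For the first summand, H\"older on $B$ with exponents $\tilde p_1$ and $p_2$, where $\frac{1}{\tilde p_1}=\frac{1}{p_1}-\frac{1-\alpha}{n}$ so that $\frac{1}{\tilde p_1}+\frac{1}{p_2}=\frac{1}{q}$, yields $\|(f-S_tf)g\|_{L^q(B)}\le\|f-S_tf\|_{L^{\tilde p_1}(B)}\|g\|_{L^{p_2}(B)}$. Applying the linear fractional estimate to $\|f-S_tf\|_{L^{\tilde p_1}(B)}$ and bounding $\|g\|_{L^{p_2}(B)}\le\|g\|_{L^{p_2}(2^{l+1}B)}$ term-by-term inside the resulting series produces the first piece of the right-hand side in the statement.

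For the second summand, H\"older (now with exponents $p_1$ and $\tilde p_2$, where $\frac{1}{\tilde p_2}=\frac{1}{p_2}-\frac{1-\alpha}{n}$) gives $\|S_t(f)(g-S_tg)\|_{L^q(B)}\le\|S_tf\|_{L^{p_1}(B)}\|g-S_tg\|_{L^{\tilde p_2}(B)}$; the second factor is again controlled by the linear fractional estimate. For the first factor, split the integral defining $S_tf(x)$ over $2B$ and the dyadic rings $C_k=2^{k+1}B\setminus 2^kB$ and use the off-diagonal decay \eqref{gammadecay} with $t=r(B)^m$ in the form $|p_t(x,y)|\lesssim r(B)^{-n}2^{-k(n+\varepsilon)}$ for $x\in B$, $y\in C_k$; H\"older in $y$ and integration in $x\in B$ then give
\[
\|S_tf\|_{L^{p_1}(B)}\;\lesssim\;\sum_{k\ge 0} 2^{-k(\varepsilon+n/p_1)}\|f\|_{L^{p_1}(2^{k+1}B)}.
\]

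The main obstacle is the consolidation of the two geometric series now multiplying each other (indexed by $k$ from $\|S_tf\|$ and $l$ from $\|g-S_tg\|$) into the single series of the statement. Using the monotonicity of $\|f\|_{L^{p_1}(2^{k+1}B)}$ and $\|\nabla g\|_{L^{p_2}(2^{l+1}B)}$ in $k,l$ and writing $j=\max(k,l)$, one dominates the product of local norms by $\|f\|_{L^{p_1}(2^{j+1}B)}\,\|\nabla g\|_{L^{p_2}(2^{j+1}B)}$, after which the Schur-type estimate
\[
\sum_{\max(k,l)=j} 2^{-k(\varepsilon+n/p_1)}\,2^{-l(\varepsilon-\alpha)}\;\lesssim\;2^{-j(\varepsilon-\alpha)},
\]
valid since $\alpha<\varepsilon$, collapses the double sum and produces the second piece of the right-hand side. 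The symmetry of the algebraic decomposition between $f$ and $g$ then gives precisely the two terms in the statement.
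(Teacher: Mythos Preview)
Your approach is genuinely different from the paper's. The paper never separates the two factors: it works with the tensor semigroup $\mathcal{P}_t=S_t\otimes S_t$, writes $F(x,x)-\mathcal{P}_{r(B)^m}F(x,x)=-\int_0^{r(B)^m}t\partial_t\mathcal{P}_t(F-F_{B_t\times B_t})(x,x)\,\frac{dt}{t}$, and derives a pointwise bilinear representation formula bounding the oscillation by operators $\mathcal{J}_{2^{l+1}B}$ with logarithmically perturbed bilinear Riesz kernels; the $L^{p_1}\times L^{p_2}\to L^q$ bound for $\mathcal{J}_B$ (with norm $\lesssim r(B)^\alpha$) then finishes the proof. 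Your route via the algebraic split $fg-S_tf\,S_tg=(f-S_tf)g+S_tf\,(g-S_tg)$ plus the linear inequality \eqref{sobpoinexpanded} is more elementary when it works, and the Schur argument collapsing the double sum is fine.

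There is, however, a real gap in the range of exponents. Your H\"older step needs $\tilde p_1>0$, i.e.\ $\frac{1}{p_1}>\frac{1-\alpha}{n}$, and symmetrically $\frac{1}{p_2}>\frac{1-\alpha}{n}$. The theorem only assumes $1<p_1,p_2<\infty$ and $\frac{1}{q}=\frac{1}{p_1}+\frac{1}{p_2}-\frac{1-\alpha}{n}>0$, which allows, say, $n=2$, $\alpha=\tfrac12$, $p_1=10$, $p_2=5$, $q=20$; here $\frac{1}{\tilde p_1}=\frac{1}{10}-\frac{1}{4}<0$ and your first H\"older split is impossible. More generally, whenever $p_1\ge \frac{n}{1-\alpha}$ (equivalently $q\ge p_2$) the linear Sobolev--Poincar\'e you invoke has no target Lebesgue exponent, and there is no way to recover $\|g\|_{L^{p_2}}$ from an $L^q$ norm on $B$ since $q>p_2$. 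The paper's bilinear potential operator $\mathcal{J}_B$ absorbs the full smoothing $\frac{1-\alpha}{n}$ jointly, which is exactly what lets it cover arbitrary $p_1,p_2\in(1,\infty)$; your factor-by-factor reduction forces each $p_i$ to individually carry the whole gain, and that is strictly stronger than the hypothesis.
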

\noindent If fact, our full result is a  more general weighted version of Theorem \ref{thm:mainpoincare} (see Theorem \ref{thm:bp}).
The proof consists in establishing a bilinear representation formula tailored to the semigroup $\{S_t\}$, which, as expected, turns out to be an expanded version of \eqref{repformbilinear} (see Theorem \ref{thm:representation}). This bilinear representation formula involves logarithmic perturbations of the bilinear fractional integral used in \cite{MMN}, whose kernels are proved to still satisfy appropriate growth conditions that guarantee boundedness of the operator on products of weighted Lebesgue spaces.

In Section \ref{sec:campanato} we  define bilinear Campanato-Morrey
spaces associated to $\{S_t\}$ and use the results of
section~\ref{sec:bpapproxident}  to produce associated (weighted)
Leibniz-type rules.

 In Section \ref{sec:extensions} we point out relevant extensions to the contexts of doubling Riemannian manifolds and Carnot groups.

In Section \ref{sec:bpseudo} we close the circle of ideas developed in Sections \ref{sec:intro}-\ref{sec:bpapproxident} by relating bilinear
 pseudo-differential operators  and bilinear potential operators.
More specifically, we study bilinear pseudo-differential operators
of the form
 \begin{align} \label{operator}
  T_\sigma(f,g)(x) & = \int_{\re^{2n}} e^{ix(\xi+\eta)} \sigma(x,\xi,\eta)\widehat{f}(\xi) \widehat{g}(\eta) d\xi d\eta, \quad f,g\in\mathcal{S}(\re^n), \,x\in \re^n.
\end{align}
We relate such operators to potential operators via the inequalities
 \begin{equation}\label{newbond}
 |T_\sigma(f,g)| \lesssim \mathcal{B}_s(|f|,|g|) \quad \text{ and } \quad |T_\sigma(f,g)| \lesssim \mathcal{I}_s(|f|,|g|), \quad f, g \in \Sw,
 \end{equation}
where $\mathcal{B}_s$ is the bilinear fractional integral of order
$s$, introduced and studied in \cite{G} and \cite{KS},
$\mathcal{I}_s$ is the bilinear Riesz potential of order $s$
introduced in \cite{KS}, and $\sigma$  belongs to standard classes
of bilinear symbols of order $-s$. As a consequence of these  bonds
between bilinear pseudo-differential and potential operators, we
obtain the following (see Sections \ref{sec:tools} and
\ref{sec:bpseudo} for pertinent definitions):
\begin{theor}\label{thm:mainpdo}
Suppose $n\in\na$  and consider exponents $p_1,p_2\in(1,\infty)$  and $q,\,s>0$ that satisfy
\begin{equation}\label{Sobolevscales}
\frac{1}{q}  = \frac{1}{p_1} + \frac{1}{p_2} - \frac{s}{n}.
\end{equation}
\begin{enumerate}[(a)]
\item  If  $s\in(0,2n),$ $0\le \delta<1,$ and  $\sigma \in BS^{-s}_{1,\delta}(\re^n)\cup \dot{BS^{-s}_{1,\delta}}(\re^n)$ then
$T_\sigma$ is bounded from $L^{p_1}\times L^{p_2}$ into $L^q$.
\item  If $s\in (0,n),$ $\theta\in (0,\pi)\setminus \{\pi/2,3\pi/4\}$, $0\leq \delta<1$
and $\sigma \in BS^{-s}_{1,\delta;\theta}(\re^n)\cup \dot{ BS^{-s}_{1,\delta;\theta}}(\re^n)$
then the bilinear operator $T_\sigma$ is bounded from $L^{p_1} \times L^{p_2}$ into $L^q.$
\end{enumerate}
\end{theor}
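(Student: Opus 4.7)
The plan is to establish Theorem \ref{thm:mainpdo} by first proving the pointwise bonds announced in \eqref{newbond} relating $T_\sigma$ to the bilinear potential operators, and then invoking the known $L^{p_1}\times L^{p_2}\to L^q$ boundedness of $\mathcal{B}_s$ and $\mathcal{I}_s$ under the Sobolev scaling \eqref{Sobolevscales} (results of \cite{G} and \cite{KS}). Concretely, Theorem \ref{thm:mainpdo} will be deduced from the pointwise bound $|T_\sigma(f,g)(x)|\lesssim \mathcal{B}_s(|f|,|g|)(x)$ for part (a), and from the finer bound $|T_\sigma(f,g)(x)|\lesssim \mathcal{I}_s(|f|,|g|)(x)$ for part (b), both for $f,g\in\mathcal{S}(\re^n)$.

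The first main step is a kernel-side estimate. Writing
\[
T_\sigma(f,g)(x)=\int_{\re^{2n}} K(x,x-y,x-z)\,f(y)g(z)\,dy\,dz,\qquad K(x,u,v):=\int_{\re^{2n}} e^{i(u\cdot\xi+v\cdot\eta)}\sigma(x,\xi,\eta)\,d\xi\,d\eta,
\]
I would estimate $K$ by a dyadic decomposition in the frequency variables combined with standard integration by parts against $(1-\Delta_\xi)$ and $(1-\Delta_\eta)$. For $\sigma\in BS^{-s}_{1,\delta}$, the symbol estimates $|\partial_\xi^\alpha\partial_\eta^\beta\sigma(x,\xi,\eta)|\lesssim(1+|\xi|+|\eta|)^{-s-|\alpha|-|\beta|}$ (uniformly in $x$, since only $\xi,\eta$-derivatives enter the kernel) are transverse to $\delta$, which permits the usual split into the regions $|\xi|+|\eta|\lesssim(|u|+|v|)^{-1}$ and its complement, and yields
\[
|K(x,u,v)|\lesssim (|u|+|v|)^{-(2n-s)},\qquad 0<s<2n,
\]
with the low-frequency integrability being exactly what the hypothesis $s<2n$ ensures. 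The homogeneous (dotted) case $\dot{BS^{-s}_{1,\delta}}$ is handled by a separate treatment of the unit-frequency neighborhood via a Schwartz cutoff so that the low-frequency contribution yields a Schwartz kernel dominated by $(|u|+|v|)^{-(2n-s)}$ as well. Inserting this bound into the oscillatory integral representation of $T_\sigma$ produces the first bond in \eqref{newbond}, and Grafakos' and Kenig--Stein's $L^{p_1}\times L^{p_2}\to L^q$ bounds for $\mathcal{B}_s$ under \eqref{Sobolevscales} then finish part (a).

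For part (b), the additional parameter $\theta$ restricts the frequency support of the symbol to a sector avoiding the degenerate directions $\xi=-\eta$ (which correspond to $\theta\in\{\pi/2,3\pi/4\}$, where the bilinear operator exhibits the Coifman--Meyer degeneracy on the anti-diagonal and cannot be dominated by the separable Riesz kernel). In this sectorial setting, a further integration by parts in the transverse direction within the support of $\sigma$ upgrades the off-diagonal decay of $K$ from the $\mathcal{B}_s$-type kernel to the $\mathcal{I}_s$-type kernel $(|u|^2+|v|^2)^{(s-2n)/2}$, uniformly in $x$. This gives the second bond in \eqref{newbond}, and then the known mapping properties of $\mathcal{I}_s$ on products of Lebesgue spaces complete part (b), under the more restrictive range $0<s<n$ which is natural for $\mathcal{I}_s$.

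The main obstacle I expect is the kernel estimate step, specifically controlling $K$ uniformly in $x$ and reconciling the cases $\delta>0$ with the homogeneous class: for $\delta<1$ the $x$-dependence of $\sigma$ does not propagate into the $(u,v)$-kernel bound, but the homogeneous class requires an additional low-frequency splitting to avoid the singularity of $\sigma$ at the origin. For part (b), the more delicate point is to pin down the precise quantitative dependence on the sector $\theta$ and to understand why the exceptional angles $\pi/2$ and $3\pi/4$ are genuine obstructions rather than artifacts of the proof; this is where the geometry of the cancellation in the oscillatory integral must be exploited most carefully.
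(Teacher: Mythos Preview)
Your overall strategy---kernel estimates leading to pointwise domination by bilinear potential operators, then invoking their known $L^{p_1}\times L^{p_2}\to L^q$ bounds---matches the paper's. However, you have the two potential operators interchanged, and this is not merely a labeling slip: it reflects a genuine misunderstanding of part (b).

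For part (a), your kernel estimate $|K(x,u,v)|\lesssim (|u|+|v|)^{-(2n-s)}$ is exactly what the paper proves (via a continuous Littlewood--Paley decomposition $\int_0^\infty \Psi(t\xi,t\eta)\,\frac{dt}{t}=1$ and integration by parts on each shell). But this kernel is the kernel of $\mathcal{I}_s$, the operator integrating over $\re^{2n}$ against $(|x-y|+|x-z|)^{-(2n-s)}$, not of $\mathcal{B}_s$. So the correct conclusion of your own computation is $|T_\sigma(f,g)|\lesssim \mathcal{I}_s(|f|,|g|)$, and the relevant boundedness result is that of $\mathcal{I}_s$ for $0<s<2n$ (Kenig--Stein and \cite{Kabe}). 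With this correction part (a) goes through.

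Part (b) has a real gap. The classes $BS^{-s}_{1,\delta;\theta}$ do \emph{not} impose a sectorial support condition; they impose decay only in the single direction $\xi-\tan(\theta)\eta$, with no decay whatsoever in the transverse direction. Consequently the $2n$-dimensional kernel estimate you rely on fails: integration by parts in the transverse frequency variable gains nothing, and there is no way to dominate the general $T_\sigma$ by either $\mathcal{I}_s$ or $\mathcal{B}_s$ via a kernel bound on $\re^{2n}$. The paper's proof of part (b) (see Theorem~\ref{thm:pdobounds}) in fact requires the additional structural hypothesis that $\sigma(x,\xi,\eta)=\sigma_0(x,\xi-\tan(\theta)\eta)$ with $\sigma_0\in S^{-s}_{1,\delta}(\re^n)$. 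Under this assumption one writes
\[
T_\sigma(f,g)(x)=\int_{\re^n} k(x,y)\,f(x+y)\,g(x-\tan(\theta)\,y)\,dy,\qquad k(x,y)=\widehat{\sigma_0(x,\cdot)}(y),
\]
and a one-variable version of the kernel argument gives $|k(x,y)|\lesssim |y|^{s-n}$, hence $|T_\sigma(f,g)|\lesssim \mathcal{B}_s(|f|,|g|)$. Here $\mathcal{B}_s$ is the single-integral operator $\int_{\re^n}|y|^{-(n-s)}f(x-s_1 y)g(x-s_2 y)\,dy$, and its natural range is $0<s<n$---this, not anything about $\mathcal{I}_s$, is the reason for the restriction $s<n$ in part (b). The excluded angles $\theta=\pi/2,\,3\pi/4$ correspond respectively to the degenerate cases $\tan\theta=\infty$ and $\tan\theta=-1$ (i.e.\ $s_1=s_2$ in $\mathcal{B}_s$), not to the anti-diagonal $\xi=-\eta$ as you suggest.
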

\noindent We actually prove a more general weighted version of
Theorem \ref{thm:mainpdo}, for which we refer the reader to Theorem
\ref{thm:pdobounds} for a more precise statement.  In
section~\ref{sec:leibnizsob} we present some consequences of Theorem
\ref{thm:mainpdo}, such as  Sobolev-based fractional Leibniz rules of the
form
\begin{equation}\label{sobolevkatoponce}
\norm{fg}{W^{m,q}} \lesssim \norm{f}{W^{s+m,p_1}} \norm{g}{L^{p_2}} + \norm{f}{L^{p_1}}\norm{g}{W^{s+ m,p_2}},
\end{equation}
for a range of indices $m$ and $s$, with the novelty $0 < q < 1$,
under the bilinear Sobolev scaling of equation
\eqref{Sobolevscales}.  The relation \eqref{Sobolevscales} sheds
additional light onto the balance between integrability and
smoothness built into inequalities of the type \eqref{katoponce}.
Notice that inequality \eqref{sobolevkatoponce} cannot be obtained
by applying Sobolev embedding and then the fractional Leibniz rule
\eqref{katoponce}, because equation \eqref{Sobolevscales} allows for
$0<q<1$ in which case the Sobolev embedding $W^{m+s,r}\subseteq
W^{m,q}$ fails for any choice of $r$ with $1/r=1/p_1 + 1/p_2$ and
$q$ as in \eqref{Sobolevscales}.

The bilinear Poincar\'e estimates introduced in \cite{MMN} rely on
the oscillation of the pointwise product of two functions (i.e.
$T_\sigma$ with $\sigma \equiv 1$); in turn, they give rise to
bilinear Sobolev inequalities of the form
\begin{equation}
 \|fg\|_{L^q} \lesssim \|\nabla f\|_{L^{p_1}} \|g\|_{L^{p_2}} + \|f\|_{L^{p_1}}\|\nabla g\|_{L^{p_2}} \label{eq:sob},
  \end{equation}
for exponents $p_1,p_2>1$ and $q>0$ satisfying the Sobolev relation
\eqref{Sobolevscale}. These results correspond to the limit of
bilinear Poincar\'e inequalities on balls, by making the radius of
the ball tend to infinity. We direct the reader to  \cite{MMN, Kabe,
MN} for other versions of \eqref{eq:sob}, including weights and
higher order derivatives in the context of H\"ormander vector
fields. The results presented in Section \ref{sec:leibnizsob}
further substantiates inequalities of the type \eqref{eq:sob} under
Sobolev scaling and unifies their study in the language of bilinear
pseudo-differential operators.

 Throughout the paper, we use upper-case letters to label theorems corresponding to known results
while we use single numbers (with no reference to the section) for
theorems, propositions and corollaries that are new and proved in
this article.

\section{Bilinear fractional integrals and their boundedness properties on weighted Lebesgue spaces}\label{sec:tools}

Given a weight $w$ defined on $\re^n$ and $p > 0$, the notation $L^p_w$ will be used to refer to the weighted Lebesgue space of all functions $f:\re^n\to\mathbb{C}$ such that $\|f\|_{L^p_w}^p:=\int_{\re^n}\abs{f(x)}^pw(x)\,dx<\infty$, when $w\equiv 1$ we will simply write $L^p.$

  If $w_1,\,w_2$ are weights defined on $\re^n,$  $1< p_1,\,p_{2}<\infty,$ $q>0,$
  and $  w:=w_1^{q/p_1}w_2^{q/p_2},$
we say that $(w_1,w_2)$ satisfies the $A_{(p_1,p_2),q}$ condition (or that $(w_1,w_2)$ belongs to the  class $A_{(p_1,p_2),q}$) if
$$[(w_1,w_2)]_{A_{(p_1,p_2),q}}:= \sup_{B }\Big(\frac{1}{|B|}\int_B w(x)\,dx\Big)\,
\prod_{j=1}^2 \Big(\frac{1}{|B|}\int_B w_j(x)^{1-p'_j}\,dx
\Big)^{\frac{q}{p'_j}}<\infty,$$
where the supremum is taken over all Euclidean balls $B\subset \re^n$ and $\abs{B}$ denotes the Lebesgue measure of $B$.

The  classes $A_{(p_1,p_2),q}$ are  inspired in the classes of weights $A_{p,q},$ $1\le p,\,q<\infty,$ defined by Muckenhoupt and Wheeden in \cite{MW} to study weighted norm inequalities for the fractional integral: a weight $u$ defined on $\re^n$ is in the class $A_{p,q}$ if
$$\sup_B \left(\frac{1}{|B|}\int_{B} u^{\frac{q}{p}}\,dx\right)\left(\frac{1}{|B|}\int_B u^{(1-p')}\,dx\right)^{\frac{q}{p'}}<\infty.
$$
The classes $A_{(p_1,p_2),q}$ for $1/q=1/p_1+1/p_2$  were introduced in \cite{LOPTT} to study characterizations of weights for boundedness properties  of certain bilinear maximal functions and bilinear Calder\'on-Zygmund operators in weighted Lebesgue spaces. As shown in \cite{Kabe}, the classes $A_{(p_1,p_2),q}$ characterize the weights rendering analogous bounds for bilinear fractional integral operators .

\begin{remark} \label{rem:weights}  If $(w_1,w_2)$ satisfies the $A_{(p_1,p_2),q}$ condition then $w=w_1^{q/p_1}w_2^{q/p_2}$ and $w_i^{1-p_i'},$ $i=1,2,$ are $A_\infty$ weights as shown in \cite[Theorem 3.6 ]{LOPTT} and \cite[Theorem 3.4]{Kabe}.

\end{remark}

For $\alpha>0$, we consider bilinear fractional integral operators on ${\mathbb R}^n$ of order $\alpha>0$ defined by
\begin{align}
\mathcal{B}_{\alpha} (f,g)(x) &:= \int_{{\mathbb R}^{n}}
 \frac{f(x-s_1 y)g(x-s_2 y)}{|y|^{n-\alpha}}\,dy,\quad x\in\re^n.\label{def:B}\\
\mathcal{I}_{\alpha} (f,g)(x) &:= \int_{{\mathbb R}^{2n}}
 \frac{f(y) g(z)}{(|x-y|+|x-z|)^{2n-\alpha}}\,dydz,\quad x\in\re^n,  \label{def:multfracop}
\end{align}
where $s_1 \ne s_2$ are nonzero real numbers. In the following theorem we summarize  results concerning boundedness properties on weighted and unweighted Lebesgue spaces for the operators $\mathcal{B}_\alpha$ and $\mathcal{I}_\alpha$, which will be useful in some of our proofs.

\begin{thm}\label{fracint} In $\re^n:$
\begin{enumerate}[(a)]
\item \label{thm:Ialpha} \cite{KS, Kabe}  Let $\alpha \in (0,2n),$ $1<p_1,p_2<\infty$ and $q>0$ such that
$ \frac{1}{q}=\frac{1}{p_1}+\frac{1}{p_2}-\frac{\alpha}{n}.$ Then $\mathcal{I}_\alpha$ is bounded from $L^{p_1}_{w_1} \times L^{p_2}_{w_2}$ into $L^q_{w}$ for $ w:= w_1^{q/p_1}w_2^{q/p_2}$ and  pairs of weights $(w_1,w_2)$ satisfying the $A_{(p_1,p_2),q}$ condition.

\item \label{thm:Balphaunweighted} \cite{G, KS} Let $\alpha \in (0,n),$ $1<p_1,p_2<\infty$ and $q>0$ such that
$ \frac{1}{q}=\frac{1}{p_1}+\frac{1}{p_2}-\frac{\alpha}{n}.$ Then $\mathcal{B}_\alpha$ is bounded from $L^{p_1} \times L^{p_2}$ into $L^q$.
\item \label{thm:Balphaweighted}{\rm{[Remark \ref{weightsBalpha}]}} Let $\alpha\in (0,n),$ $1<p_1,p_2<\infty$ such that $1/p:=1/p_1+1/p_2<1$ and $q>1$ such that $1/q=1/p-\alpha/n$.  Then $\mathcal{B}_\alpha$ is bounded from $L^{p_1}_{w_1} \times L^{p_2}_{w_2}$ into $L^q_{w}$ for $ w:= w_1^{q/p_1}w_2^{q/p_2}$ and  weights $w_1,\,w_2$ in $A_{p,q}$.
\end{enumerate}
 \end{thm}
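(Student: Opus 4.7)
The plan is to factor $\mathcal{B}_\alpha$ into a pointwise product of two linear Riesz potentials via H\"older and then invoke the classical (linear) Muckenhoupt--Wheeden theorem on each factor. The two hypotheses of (c) serve distinct purposes: $1/p_1+1/p_2=1/p<1$ is exactly what makes $\lambda:=p_1/p$ and $\mu:=p_2/p$ a pair of conjugate H\"older exponents with $\lambda,\mu>1$, while $1<p,q<\infty$ is what makes the weighted linear bound $I_\alpha: L^p_{w_i}\to L^q_{w_i^{q/p}}$ available when $w_i\in A_{p,q}$.

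The first step is a pointwise inequality. Splitting the kernel $|y|^{-(n-\alpha)}=|y|^{-(n-\alpha)/\lambda}\cdot|y|^{-(n-\alpha)/\mu}$ and applying H\"older in $y$ with exponents $(\lambda,\mu)$, followed by the changes of variables $z=s_i y$ in each factor, I would obtain
\[
|\mathcal{B}_\alpha(f,g)(x)|\lesssim I_\alpha(|f|^\lambda)(x)^{1/\lambda}\,I_\alpha(|g|^\mu)(x)^{1/\mu},
\]
with an implicit constant depending only on $n,\alpha,s_1,s_2$.

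Next I would raise this to the $q$-th power, integrate against $w=w_1^{q/p_1}w_2^{q/p_2}$, regroup the weight as $(w_1^{q/p})^{1/\lambda}(w_2^{q/p})^{1/\mu}$, and apply a second H\"older in $x$ with the same exponents $(\lambda,\mu)$ to separate the two factors and arrive at
\[
\|\mathcal{B}_\alpha(f,g)\|_{L^q_w}^{q}\lesssim \|I_\alpha(|f|^\lambda)\|_{L^q_{w_1^{q/p}}}^{q/\lambda}\cdot \|I_\alpha(|g|^\mu)\|_{L^q_{w_2^{q/p}}}^{q/\mu}.
\]
The proof then closes by the Muckenhoupt--Wheeden theorem: since $w_i\in A_{p,q}$ and $1<p,q<\infty$, we have $\|I_\alpha(h)\|_{L^q_{w_i^{q/p}}}\lesssim\|h\|_{L^p_{w_i}}$; substituting $h=|f|^\lambda$ and using the identity $\||f|^\lambda\|_{L^p_{w_1}}=\|f\|_{L^{p_1}_{w_1}}^{\lambda}$ (which is valid precisely because $\lambda p=p_1$), and symmetrically for $g$, produces the claim after extracting the $q$-th root. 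I do not foresee a substantive obstacle: everything hinges on identifying the conjugate pair $(\lambda,\mu)=(p_1/p,\,p_2/p)$, after which the Sobolev scaling $1/q=1/p-\alpha/n$ and the arithmetic $\lambda p=p_1$, $\mu p=p_2$ make every remaining step mechanical. The only subtle point worth flagging is that the extra hypothesis $q>1$ (absent in part (b)) is forced on us in (c) solely in order to invoke the linear weighted theorem.
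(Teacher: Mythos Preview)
Your proposal is correct and follows essentially the same route as the paper's own argument in Remark~\ref{weightsBalpha}: set the conjugate pair $(\lambda,\mu)=(p_1/p,\,p_2/p)$, apply H\"older in $y$ to dominate $|\mathcal{B}_\alpha(f,g)|$ pointwise by $I_\alpha(|f|^\lambda)^{1/\lambda}I_\alpha(|g|^\mu)^{1/\mu}$, then H\"older again in $x$ and invoke the linear Muckenhoupt--Wheeden bound $I_\alpha:L^p_{w_i}\to L^q_{w_i^{q/p}}$ on each factor. Your added remarks on the role of the hypotheses $1/p<1$ and $q>1$ are accurate and make the logic more transparent than the paper's terse presentation.
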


 \begin{remark}\label{weightsBalpha} Part \eqref{thm:Balphaweighted} of Theorem~\ref{fracint} follows from the following observations.
 Muckenhoupt and Wheeden \cite{MW} showed that the linear fractional integral operator
$$I_\alpha f(x):=\int_{\re^n} \frac{f(x-y)}{|y|^{n-\alpha}}\,dy$$
satisfies
$$\left(\int_{\re^n} |I_\alpha f(x)|^q u^{\frac{q}{p}}\,dx\right)^{1/q}\leq C\left(\int_{\re^n} |f(x)|^p u\,dx\right)^{1/p}$$
for $1/q=1/p-\alpha/n,$  $u\in A_{p,q}$ and $p,\,q>1$. Using $p$ and $q$ as in the statement of part \eqref{thm:Balphaweighted} of Theorem \ref{fracint},  let $r=p_1/p$ and $s=p_2/p$, so that $r,s>1$ and $1/r+1/s=1$. By H\"older's inequality
$$|\mathcal{B}_\alpha(f,g)|\lesssim  I_\alpha(|f|^r)^{1/r}I_\alpha(|g|^s)^{1/s},$$
and
\begin{align*}
\left(\int_{\re^n} |\mathcal{B}_\alpha(f,g)|^qw\right)^{1/q}&\leq \left(\int_{\re^n}I_\alpha(|f|^r)^{q/r}I_\alpha(|g|^s)^{q/s}w_1^{\frac{q}{p_1}}w_2^\frac{q}{p_2}\right)^{1/q}\\
&\leq \left(\int_{\re^n}I_\alpha(|f|^r)^{q}w_1^{\frac{q}{p}}\,dx\right)^{1/qr}\left(\int_{\re^n}I_\alpha(|g|^s)^{q}w_2^{\frac{q}{p}}\right)^{1/sq}.
\end{align*}
Using the result of Muckenhoupt and Wheeden, the last inequality is bounded by
$$C\left(\int_{\re^n} |f|^{rp} w_1\right)^{1/rp}\left(\int_{\re^n} |g|^{sp} w_2\right)^{1/sp}=C\left(\int_{\re^n} |f|^{p_1} w_1\right)^{1/p_1}\left(\int_{\re^n}|g|^{p_2} w_2\right)^{1/p_2},$$
which is the desired result.

 \end{remark}

 Multilinear potential operators, of which $\mathcal{I}_\alpha$ is a particular case, were studied in \cite{MMN} in the context of spaces of homogeneous type. We now  briefly recall some those results, as they will be used in the proofs in the next sections.

Let $(X,\rho,\mu)$ be a \emph{space of homogenous type}. That is, $X$ is a non-empty set, $\rho$ is a quasi-metric defined on $X$ that satisfies the quasi-triangle inequality
\begin{equation}\label{quasi}
\rho(x,y)\leq \kappa(\rho(x,z)+\rho(z,y)),\qquad x,y,z\in X,
\end{equation}
for some $\kappa\geq 1,$ and $\mu$ is a Borel measure on $X$ (with respect to the
topology defined by $\rho$) such that there exists a constant $L_0\ge
0$ verifying
\begin{equation}\label{doublingMu}
0<\mu(B_\rho(x,2r)\le L_0\,\mu(B_\rho(x,r))<\infty
\end{equation}
for all $x\in X$ and $0<r<\infty,$ and where $B_\rho(x,r)=\{y\in
X:\rho(x,y)<r\}$ is the $\rho$-ball of center $x$ and radius $r.$ Given a ball $B=B_\rho(x,r)$ and $\theta > 0$ we will usually write $r(B)$ to denote the radius $r$ and $\theta B$ to denote $B_\rho(x, \theta r)$. In the Euclidean setting, this is, when $X=\re^n,$ $\rho$ is Euclidean distance and $\mu$ is Lebesgue measure, we use the notation $B(x,r)$ instead of $B_\rho(x,r)$.

The measure $\mu$ is said to satisfy the {\it reverse doubling property} if for every $\eta>1$ there are constants  $c(\eta)>0$ and $\gamma>0$ such that
\begin{equation}\label{reversedoubling}
\frac{\mu(B_\rho(x_1,r_1))}{\mu(B_\rho(x_2,r_2))}\ge c(\eta) \,\left(\frac{r_1}{r_2}\right)^\gamma,
\end{equation}
whenever $B_\rho(x_2,r_2)\subset B_\rho(x_1,r_1),$ $x_1,\,x_2\in X$ and $0<r_1,\,r_2\le\eta\, \rm{diam}_\rho(X).$

We consider bilinear potential operators of the form
\begin{equation} \label{generalpot}\mathcal{T}(f,g)(x) = \int_{X^2} f(y)g(z) K(x,y,z) \,d\mu(y)d\mu(z),
\end{equation}
where the kernel $K$ is the restriction of a nonnegative continuous kernel $\tilde{K}(x_1,x_2,y,z)$ (i.e. $K(x,y,z)=\tilde{K}(x,x,y,z)$ for $(x,y,z)\in X\times X \times X$) that satisfies the following \emph{growth conditions}: for every $c>1$ there exists $C>1$ such that
\begin{eqnarray}\label{growth}
\tilde{K}(x_1,x_2,y,z)&\leq& C \tilde{K}(v,w,y,z) \quad {\rm if} \ \rho(v,y)+\rho(w,z) \leq c \,(\rho(x_1,y)+\rho(x_2,z)), \text{ and} \nonumber \\ \\
\tilde{K}(x_1,x_2,y,z)&\leq& C \tilde{K}(y,z,v,w) \quad {\rm if} \ \rho(y,v)+\rho(z,w) \leq c \,(\rho(x_1,y)+\rho(x_2,z)). \nonumber
\end{eqnarray}
The functional $\varphi$ associated to $K$ is defined by
$$
\varphi(B):=\sup\{ K(x,y,z) : (x,y,z) \in B\times B\times B, \rho(x,y)+\rho(x,z)\geq c\,r(B)\}
$$
for a sufficiently small positive constant $c$ and for $B$ a $\rho$ ball such that $r(B)\le \eta \,\rm{diam}_\rho(X),$ for some fixed $\eta>1.$
The functional $\varphi$ associated to $K$  will be assumed to satisfy the following property:  there exists $\delta>0$  such that for all $C_1>1$  there exists $C_2>0$ such that
\begin{equation} \label{phiassumption}
\varphi(B')\mu(B')^2 \leq C_2\,\left(\frac{r(B')}{r(B)}\right)^\delta \varphi(B)\mu(B)^2
\end{equation}
for  all balls  $B'\subset B,$ with  $r(B'),\,r(B)< C_1\,  \rm{diam}_\rho(X)$.

We note that, in the Euclidean setting, the operator $\mathcal{I}_\alpha$ defined in \eqref{def:multfracop} has kernel and associated functional given, respectively, by
\[K(x,y,z)=\frac{1}{(\abs{x-y}+\abs{x-z})^{2n-\alpha}}\qquad \text{and}\qquad \varphi(B)\sim r(B)^{\alpha-2n},\]
and both satisfy \eqref{growth} and \eqref{phiassumption}.

\begin{thm}[\cite{MMN}]\label{general2w} Suppose that $1<p_1, p_2\leq \infty,$ $\frac{1}{p}=\frac{1}{p_1}+\frac{1}{p_2}$ and $\frac{1}{2}<p\le q<\infty.$
  Let $(X,\rho,\mu)$ be a space of homogeneous type  that satisfies the reverse doubling property \eqref{reversedoubling} and let  $K$ be a kernel such that \eqref{growth} holds with $\varphi$ satisfying \eqref{phiassumption}.  Furthermore, let $u, v_k,$
$k=1,2$ be weights defined on $X$ that satisfy condition
\eqref{general2wq>1hm} if $q>1$ or condition \eqref{general2wq<1hm} if $q\le 1,$
where
\begin{equation}\label{general2wq>1hm}
\sup_{B\, \rho\text{-ball}} \varphi(B) \mu(B)^{\frac{1}{q} + \frac{1}{{p_1}'}+\frac{1}{{p_2'}}}
\left( \frac{1}{\mu(B)}\int_B u^{qt} d\mu \right)^{1/qt}
\prod_{j=1}^2 \left( \frac{1}{\mu(B)}\int_B v_i^{-tp_i'} d\mu
\right)^{1/tp_i'} < \infty,
\end{equation}
for some $t > 1$,

\begin{equation}\label{general2wq<1hm}
\sup_{B\, \rho\text{-ball}} \varphi(B) \mu(B)^{\frac{1}{q} + \frac{1}{{p_1}'}+\frac{1}{{p_2'}}}
\left( \frac{1}{\mu(B)}\int_B u^{q} d\mu \right)^{1/q} \prod_{j=1}^2
\left( \frac{1}{\mu(B)}\int_B v_i^{-tp_i'} d\mu \right)^{1/tp_i'} <
\infty,
\end{equation}
for some $t > 1, $ with the supremum taken over $\rho$-balls with $r(B)\lesssim \text{diam}_{\rho}(X).$
Then there exists a constant $C$  such that
\[
\left(\int_{X}\left(\abs{\mathcal{T}(f_1,f_2)}u\right)^q\,d\mu\right)^{1/q}\le
C \prod_{k=1}^2\left(\int_X(\abs{f_k}v_k)^{p_k}\,d\mu\right)^{1/p_k}
\]
for all $(f_1,f_2)\in L^{p_1}_{v_1^{p_1}}(X )\times L^{p_2}_{v_2^{p_2}}(X).$ The constant $C$ depends only on the constants appearing in \eqref{quasi}, \eqref{doublingMu}, \eqref{reversedoubling}, \eqref{growth}, \eqref{phiassumption}, \eqref{general2wq>1hm} and \eqref{general2wq<1hm}.
\end{thm}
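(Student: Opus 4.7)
The plan is to dominate $\mathcal T(f,g)$ pointwise by a bilinear generalized fractional maximal function tied to the functional $\varphi$, and then to prove the corresponding weighted $L^{p_1}\times L^{p_2}\to L^q$ bound for that maximal operator by a Calder\'on--Zygmund-type decomposition adapted to $(X,\rho,\mu)$. The auxiliary operator is
$$M_\varphi(f,g)(x):=\sup_{B\ni x,\,r(B)\lesssim \text{diam}_\rho(X)}\varphi(B)\,\mu(B)^2\,\Big(\frac{1}{\mu(B)}\int_B|f|\,d\mu\Big)\Big(\frac{1}{\mu(B)}\int_B|g|\,d\mu\Big).$$

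First I would establish the pointwise bound $\mathcal T(f,g)(x)\lesssim M_\varphi(f,g)(x)$. Fix $x$ and decompose $X\times X$ into dyadic annuli $A_k(x)=\{(y,z):2^{k-1}\le\rho(x,y)+\rho(x,z)<2^k\}$, with an accompanying $\rho$-ball $B_k$ centered at $x$ of radius comparable to $2^k$. The growth condition \eqref{growth} yields $K(x,y,z)\lesssim\varphi(B_k)$ throughout $A_k(x)$, so the contribution of $A_k(x)$ is at most $\varphi(B_k)\mu(B_k)^2\,(\frac{1}{\mu(B_k)}\int_{B_k}|f|)(\frac{1}{\mu(B_k)}\int_{B_k}|g|)\le M_\varphi(f,g)(x)$. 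By \eqref{phiassumption} combined with reverse doubling \eqref{reversedoubling}, the quantity $\varphi(B_k)\mu(B_k)^2$ decays geometrically away from the scale that realizes the supremum defining $M_\varphi(f,g)(x)$, so summing over $k$ yields the claim up to a constant.

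Next I would prove the weighted boundedness of $M_\varphi$. For $q>1$ I would run a classical Calder\'on--Zygmund argument: decompose $\{M_\varphi(f,g)>\lambda\}$ into almost-disjoint maximal balls $\{B_j\}$ on which $\varphi(B_j)\mu(B_j)^2$ times the product of the two averages is $\sim\lambda$; use the testing condition \eqref{general2wq>1hm} together with H\"older's inequality (the $t>1$ in the exponents supplies the reverse-H\"older room) to bound $\int_{B_j}u^q$ by a constant times $\lambda^{-q}\prod_k(\int_{B_j}|f_kv_k|^{p_k})^{q/p_k}$; sum in $j$ via bounded overlap; and integrate in $\lambda$ through the layer-cake formula. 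For $q\le 1$, the layer-cake/duality step fails; instead, discretize $M_\varphi$ via a Sawyer--Wheeden-style family of principal balls $\{B_j\}$ with pairwise disjoint reduced sets $E_j$ on which $M_\varphi(f,g)\sim a_j$, write
$$\int_X (M_\varphi(f,g)u)^q\,d\mu \lesssim \sum_j a_j^q \int_{E_j} u^q\,d\mu,$$
apply \eqref{general2wq<1hm} together with H\"older (the $t>1$ on $v_k^{-p_k'}$ still supplies the room, while $u^q$ appears at bare power), and finally exploit $q\le 1$ plus the sparsity of the principal family to replace $\sum_j \prod_k(\int_{B_j}|f_kv_k|^{p_k})^{q/p_k}$ by $\prod_k(\int_X|f_kv_k|^{p_k})^{q/p_k}$.

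The main obstacle is the $q\le 1$ regime: engineering the principal-ball family in $(X,\rho,\mu)$ so that it is sparse, compatible with the functional $\varphi$, and fine enough that the tail estimates coming from \eqref{phiassumption} and \eqref{reversedoubling} produce exactly the exponent balance $\tfrac{1}{q}+\tfrac{1}{p_1'}+\tfrac{1}{p_2'}$ required by \eqref{general2wq<1hm}. The differing shapes of \eqref{general2wq>1hm} and \eqref{general2wq<1hm}---the former allows a power $t$ on $u^q$ whereas the latter does not---reflect exactly this loss of duality in the non-Banach range and dictate how the testing conditions are inserted into each case.
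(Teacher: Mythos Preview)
This theorem is quoted from \cite{MMN} and the present paper does not supply a proof; it is stated as a known result (note the upper-case label and the explicit citation) and then invoked as a tool. So there is no ``paper's own proof'' to compare with here, only the argument in \cite{MMN}. Nonetheless, your proposal has a real gap that deserves comment.

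The problematic step is the pointwise domination $\mathcal T(f,g)(x)\lesssim M_\varphi(f,g)(x)$. Your annular decomposition correctly gives
\[
\mathcal T(f,g)(x)\lesssim \sum_{k}\varphi(B_k)\,\mu(B_k)^2\Big(\frac{1}{\mu(B_k)}\int_{B_k}|f|\Big)\Big(\frac{1}{\mu(B_k)}\int_{B_k}|g|\Big),
\]
but the sum over $k$ does not collapse to the supremum $M_\varphi(f,g)(x)$. Condition \eqref{phiassumption} says $\varphi(B')\mu(B')^2\lesssim (r(B')/r(B))^\delta\varphi(B)\mu(B)^2$ for $B'\subset B$, so $\varphi(B_k)\mu(B_k)^2$ is essentially \emph{increasing} in $k$; reverse doubling does not reverse this. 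Hence there is no geometric decay for large $k$, and in fact already in the Euclidean linear model $I_\alpha f\lesssim M_\alpha f$ fails pointwise (take $f=\sum_j 2^{-j\alpha}\chi_{\{|x|\sim 2^j\}}$: every term in the dyadic sum at the origin is $\sim 1$, so $I_\alpha f(0)=\infty$ while $M_\alpha f(0)\sim 1$). The same obstruction applies to $\mathcal T$ versus $M_\varphi$.

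What does work---and what the machinery you describe in Step~2 is really built for---is to skip the pointwise domination by a maximal function altogether and run the Sawyer--Wheeden/P\'erez principal-ball argument \emph{directly on the discretized sum}. One organizes the dyadic scales into stopping-time families; condition \eqref{phiassumption} is then used inside the argument to sum the non-principal scales below each principal ball (this is the origin of the factor $(1-D^{-\delta})^{-1}$ recorded in Remark~\ref{remm}), and the weight conditions \eqref{general2wq>1hm}--\eqref{general2wq<1hm} enter exactly as you indicate. Your Step~2 outline is essentially correct in spirit; the fix is to feed it the dyadic model of $\mathcal T$ rather than $M_\varphi$.
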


\begin{remark} \label{remm}   A careful examination of the proof of Theorem \ref{general2w} yields
$$
\|\mathcal{T}\|_{\text{op}}\lesssim \frac{C}{1-D^{-\delta}}\,\mathcal{W},
$$
where  $\mathcal{W}$ is the constant from \eqref{general2wq>1hm} or \eqref{general2wq<1hm},
$C=\max(C_1,C_2)$ and $\delta>0$ are the constants from \eqref{phiassumption}, and $D>1$ is a structural constant.
\end{remark}

\begin{remark} \label{weightscomparison} In the Euclidean setting, consider weights $w_1,\,w_2\in A_{(p_1,p_2),q}$ and $w=w_1^{{q}/{p_1}}w_2^{{q}/{p_2}},$ for some $1<p_1,\,p_2<\infty,$ $0<\frac{1}{q}<\frac{1}{p_1}+\frac{1}{p_2}$  and suppose that $\mathcal{T}$ is an operator of the form \eqref{generalpot} such that
\[
\sup_{B} \varphi(B) |B|^{\frac{1}{q} + \frac{1}{{p_1}'}+\frac{1}{{p_2'}}}\sim\sup_{B} \varphi(B) r(B)^{\frac{n}{q} + \frac{n}{{p_1}'}+\frac{n}{{p_2'}}}<\infty.
\]
It then follows that $u:=w^{\frac{1}{q}}$ and $v_k:=w_k^{\frac{1}{p_k}},$ $k=1,2,$ satisfy \eqref{general2wq>1hm} and \eqref{general2wq<1hm}. Indeed,
the second factor in \eqref{general2wq>1hm} is given by
\begin{align}\label{general2wq>1hm2}
\sup_{\,x\in\re^n,r>0}
\left( \frac{1}{|B(x,r)|}\int_{B(x,r)} w^{t} dx \right)^{1/qt}
\prod_{j=1}^2 \left( \frac{1}{|B(x,r)|}\int_{B(x,r)} w_i^{-\frac{t}{p_i-1}} dx.
\right)^{1/tp_i'}
\end{align}
Since  $w,\,w_1^{-\frac{1}{p_1-1}},\,w_2^{-\frac{1}{p_2-1}}$ are $A_\infty$ weights (see Remark \ref{rem:weights}), there exists $t>1$ such that \eqref{general2wq>1hm2} is bounded by
\[
\sup_{B}
\left( \frac{1}{|B|}\int_{B} w \,dx \right)^{1/q}
\prod_{j=1}^2 \left( \frac{1}{|B|}\int_{B} w_i^{-\frac{1}{p_i-1}} dx
\right)^{1/p_i'}=[(w_1,w_2)]_{A_{(p_1,p_2),q}} <\infty,
\]
where finiteness is due to $(\,w_1,w_2)$ satisfying the $A_{(p_1,p_2),q}$ condition. A similar reasoning applies to \eqref{general2wq<1hm}.
\end{remark}

The last two remarks imply the following

\begin{coro}
In the $n$-dimensional Euclidean setting, consider weights $w_1,\,w_2\in A_{(p_1,p_2),q}$ and $w=w_1^{{q}/{p_1}}w_2^{{q}/{p_2}},$ for some $1<p_1,\,p_2<\infty,$ $0<\frac{1}{q}<\frac{1}{p_1}+\frac{1}{p_2}.$  Suppose that $\mathcal{T}$ is an operator of the form \eqref{generalpot} such that
its kernel satisfies \eqref{growth}, the associated functional  $\varphi$ satisfies \eqref{phiassumption}, and
\[
\sup_{B} \varphi(B) r(B)^{\frac{n}{q} + \frac{n}{{p_1}'}+\frac{n}{{p_2'}}}<\infty.
\]
Then there exists a constant $A$  such that
\[
\left(\int_{\re^n}\abs{\mathcal{T}(f_1,f_2)}^qw\,dx\right)^{1/q}\le
A \prod_{k=1}^2\left(\int_{\re^n}\abs{f_k}^{p_k}w_k\,dx\right)^{1/p_k}
\]
for all $(f_1,f_2)\in L^{p_1}_{w_1}\times L^{p_2}_{w_1}.$ The constant $A$ satisfies
\[A\le c\, \sup_{B} \varphi(B) r(B)^{\frac{n}{q} + \frac{n}{{p_1}'}+\frac{n}{{p_2'}}},\]
where $c$ depends only on  $[(w_1,w_2)]_{A_{(p_1,p_2),q}} $ and other absolute constants.
\end{coro}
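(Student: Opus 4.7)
The plan is to deduce this corollary by combining Theorem~\ref{general2w} with the two preceding remarks, since the Euclidean space $(\re^n,|\cdot|,dx)$ is a space of homogeneous type satisfying the reverse doubling property \eqref{reversedoubling}, so the ambient hypotheses on $(X,\rho,\mu)$ are automatic. I would specialize Theorem~\ref{general2w} by setting $u:=w^{1/q}$ and $v_k:=w_k^{1/p_k}$ for $k=1,2$, so that the target norm becomes $\|\mathcal{T}(f_1,f_2)\|_{L^q_w}$ and the source norms become $\|f_k\|_{L^{p_k}_{w_k}}$, matching the desired inequality.

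First I would verify the appropriate $A_\infty$-type condition \eqref{general2wq>1hm} or \eqref{general2wq<1hm}, depending on whether $q>1$ or $q\le 1$. This is exactly the content of Remark~\ref{weightscomparison}: because $(w_1,w_2)\in A_{(p_1,p_2),q}$, Remark~\ref{rem:weights} guarantees that $w$, $w_1^{-1/(p_1-1)}$ and $w_2^{-1/(p_2-1)}$ are $A_\infty$ weights. The self-improvement property of $A_\infty$ then yields a common $t>1$ for which the three weighted $t$-averages over any ball are controlled by their $1$-averages; plugging this into \eqref{general2wq>1hm} (or \eqref{general2wq<1hm}) reduces the supremum to
\[
\sup_B \varphi(B)\,r(B)^{\frac{n}{q}+\frac{n}{p_1'}+\frac{n}{p_2'}}\, \left( \frac{1}{|B|}\int_{B} w\,dx \right)^{1/q}\prod_{j=1}^2 \left( \frac{1}{|B|}\int_{B} w_j^{-1/(p_j-1)}\,dx \right)^{1/p_j'},
\]
which factors as the hypothesized scaling supremum times $[(w_1,w_2)]_{A_{(p_1,p_2),q}}$, hence is finite.

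At this point Theorem~\ref{general2w} applies and gives the desired boundedness. For the explicit control of the constant $A$, I would invoke Remark~\ref{remm}, which asserts that the operator norm is bounded by a structural multiple of the supremum $\mathcal{W}$ appearing in \eqref{general2wq>1hm}/\eqref{general2wq<1hm} (with the factor $C/(1-D^{-\delta})$ depending only on the constants from \eqref{growth} and \eqref{phiassumption}). Combining this with the factorization above produces the bound
\[
A \,\le\, c\,\sup_B \varphi(B)\,r(B)^{\frac{n}{q}+\frac{n}{p_1'}+\frac{n}{p_2'}},
\]
where $c$ absorbs $[(w_1,w_2)]_{A_{(p_1,p_2),q}}$ together with the structural constants.

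There is no substantive obstacle here: the corollary is a clean packaging of Theorem~\ref{general2w} in the Euclidean, two-weight $A_{(p_1,p_2),q}$ setting, so the only mild care needed is to separate the $q>1$ and $q\le 1$ cases when invoking \eqref{general2wq>1hm} versus \eqref{general2wq<1hm} and to check that the $A_\infty$ self-improvement can be taken simultaneously for the three relevant weights, which is immediate since one may take $t$ to be the minimum of the three self-improvement exponents.
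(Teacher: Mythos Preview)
Your proposal is correct and matches the paper's approach exactly: the paper states that the corollary follows from ``the last two remarks,'' i.e., Remark~\ref{remm} and Remark~\ref{weightscomparison}, applied to Theorem~\ref{general2w} in the Euclidean setting, which is precisely the argument you have written out in detail.
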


\section{Bilinear Poincar\'e-type inequalities relative to an approximation of  the identity}\label{sec:bpapproxident}

An \emph{approximation of the identity}  of order $m>0$ in $\re^n$ is a collection of operators ${\mathcal S}:=\{S_t\}_{t>0}$ acting on functions defined on $\re^n,$
\[
S_tf(x)=\int_{\re^n}p_t(x,y)f(y)\,dy, \quad x \in \rn,
\]
such that for each $t>0$ the kernels $p_t$ satisfy $\int_{\re^n}p_t(x,y)\,dy=1$ for all $x$  and  the scaled Poisson bound
\begin{equation}\label{pbound}
\abs{p_t(x,y)}\le t^{-n/m}\,\gamma\left(\frac{|x-y|}{t^{1/m}}\right), \qquad x,\,y\in\re^n,
\end{equation}
where $\gamma:[0,\infty)\to[0,\infty)$ is a bounded, decreasing function for which
 \begin{equation}\label{gammadecay}
\lim_{r\to\infty} r^{2n+\varepsilon}\gamma(r)=0,\qquad \text{ for some }\, \varepsilon>0.
\end{equation}

 As examples, it is well-known that if  a sectorial operator $L$ generates a holomorphic semigroup $\{e^{-zL}\}_z$ whose  kernels satisfy suitable pointwise bounds, then $S_t=e^{-tL}$ gives rise to an approximation of the identity. The resolvents $S_t=(1+tL)^{-M}$ or $S_t=1-(1-e^{-tL})^N$ can be considered as well.
 We refer the reader to \cite{DY1} and \cite{Mc} for more details concerning holomorphic functional calculus.
Other  examples can be built on a second-order divergence form operator $L=-\textrm{div} (A \nabla)$ with an elliptic matrix-valued function $A$. Since $L$ is maximal accretive, it admits a bounded $H_\infty$-calculus on $L^2(\re^n)$. Moreover, when $A$ has real entries or when the dimension $n\in\{1,2\}$, then the operator $L$ generates an analytic semigroup on $L^2$ with a heat kernel satisfying Gaussian upper-bounds.

The main result of this section is the following:

\begin{theor} \label{thm:bp} Let ${\mathcal S}:=\{S_t\}_{t>0}$  and ${\mathcal S'}:=\{t\partial_t S_t\}_{t>0}$ be approximations of the identity in $\re^n$ of order $m>0$ and constant $\varepsilon$ in \eqref{gammadecay},
 $1<p_1,p_2<\infty,$ $q>0,$ and  $0<\alpha <\min\{1,\varepsilon\}$   such that
$\frac{1}{q}=\frac{1}{p_1}+\frac{1}{p_2} - \frac{1-\alpha}{n}.$
If   $(w_1,w_2)$ satisfy the $A_{(p_1,p_2),q}$ condition and $w:=w_{1}^{q/p_1} w_2^{q/p_2}$, then  there exists a  constant $C$ such that for all Euclidean  balls $B$
\begin{align*}
\lefteqn{\left\|f g - S_{r(B)^m}(f) S_{r(B)^m}(g)\right\|_{L^q_w(B)}} & & \\
& &  \le C\, r(B)^\alpha \sum_{l\geq 0} 2^{-l(\varepsilon-\alpha)} \left[\left\|\nabla f \right\|_{L_{w_1}^{p_1}(2^{l+1}B)} \left\|g \right\|_{L^{p_2}_{w_2}(2^{l+1}B)} + \left\|f\right\|_{L^{p_1}_{w_1}(2^{l+1} B)} \left\|\nabla g\right\|_{L^{p_2}_{w_2}(2^{l+1}B)} \right].
\end{align*}
\end{theor}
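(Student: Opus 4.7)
The plan is to first establish a pointwise bilinear representation formula for $|fg - S_{r(B)^m}f\cdot S_{r(B)^m}g|$ adapted to the semigroup $\{S_t\}$, and then apply the weighted boundedness of the bilinear fractional integral $\mathcal{I}_{1-\alpha}$ from Theorem \ref{fracint}(a) dyadic piece by dyadic piece. Concretely, with $t := r(B)^m$, the target pointwise inequality is
\begin{align*}
|f(x)g(x)-S_tf(x)S_tg(x)| &\lesssim r(B)^\alpha\sum_{l\ge 0} 2^{-l(\varepsilon-\alpha)}\Bigl[\mathcal{I}_{1-\alpha}(|\nabla f|\chi_{2^{l+1}B},|g|\chi_{2^{l+1}B})(x) \\
& \qquad +\mathcal{I}_{1-\alpha}(|f|\chi_{2^{l+1}B},|\nabla g|\chi_{2^{l+1}B})(x)\Bigr]
\end{align*}
for all $x\in B$; Theorem \ref{fracint}(a) applied to each summand and summation in $l$ then reproduces the stated weighted estimate.

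For the pointwise formula, I would begin from the elementary decomposition
$$fg(x) - S_tf(x)S_tg(x) = (I-S_t)(fg)(x) + \bigl[S_t(fg)(x) - S_tf(x)S_tg(x)\bigr].$$
The first summand equals $\int p_t(x,y)[f(x)g(x)-f(y)g(y)]\,dy$ because $\int p_t(x,y)\,dy = 1$, and the split $f(x)g(x) - f(y)g(y) = (f(x)-f(y))g(x) + f(y)(g(x)-g(y))$ converts it into a sum of linear oscillations of $f$ (resp.\ $g$) weighted pointwise by $g(x)$ (resp.\ $f(y)$). Using $\int p_t(x,z)\,dz=1$, the second summand is symmetrized into $\tfrac{1}{2}\int\!\!\int p_t(x,y)p_t(x,z)(f(y)-f(z))(g(y)-g(z))\,dy\,dz$, and the triangle inequality $|f(y)-f(z)|\le|f(y)-f(x)|+|f(x)-f(z)|$ reduces this piece, again, to products of linear $f$-oscillations against bounded averages of $g$ (and symmetrically). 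Applying the pointwise Riesz representation
$$|f(x)-f(u)| \lesssim \int_{B(x,c|x-u|)}\frac{|\nabla f(y)|}{|x-y|^{n-1}}\,dy$$
and Fubini converts each resulting linear-oscillation integral into an integral against $|\nabla f|$ multiplied by the Poisson tail $\int_{|x-y|\gtrsim r}p_t(x,y)\,dy$, which by \eqref{gammadecay} is bounded by $\min\!\bigl(1,(r(B)/r)^{n+\varepsilon}\bigr)$. Decomposing $\re^n$ into the dyadic annuli $2^{l+1}B\setminus 2^lB$ and trading the factor $(r(B)/r)^{\varepsilon-\alpha}$ from this tail for the shift from kernel order $2n-1$ to order $2n-(1-\alpha)$ delivers, on each annulus, the kernel of $\mathcal{I}_{1-\alpha}$ multiplied by the prefactor $r(B)^\alpha 2^{-l(\varepsilon-\alpha)}$, which is exactly the desired pointwise bound.

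Once the pointwise formula is in place, Theorem \ref{fracint}(a) (applied with order $1-\alpha$ and the pair $(w_1,w_2)\in A_{(p_1,p_2),q}$) gives, uniformly in $l$ and $B$,
$$\bigl\|\mathcal{I}_{1-\alpha}(|\nabla f|\chi_{2^{l+1}B},|g|\chi_{2^{l+1}B})\bigr\|_{L^q_w(B)} \lesssim \|\nabla f\|_{L^{p_1}_{w_1}(2^{l+1}B)}\|g\|_{L^{p_2}_{w_2}(2^{l+1}B)}$$
and its symmetric analogue, so multiplying by $r(B)^\alpha 2^{-l(\varepsilon-\alpha)}$ and summing in $l$ yields the theorem. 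I expect the main technical obstacle to be the bookkeeping in deriving the pointwise formula: verifying that the trade between the Poisson-tail decay exponent $\varepsilon$ and the fractional-order shift to $1-\alpha$ works out uniformly in $l$, and that the residual kernel on each annulus is genuinely comparable to that of $\mathcal{I}_{1-\alpha}$ (possibly up to the logarithmic factor alluded to in the introduction, whose associated functional still satisfies the scaling condition \eqref{phiassumption}). The auxiliary hypothesis that $\{t\partial_tS_t\}$ is itself an approximation of the identity would naturally enter through the alternative derivation in which one integrates the time derivative $\partial_s(S_sf\cdot S_sg) = (\partial_sS_sf)\,S_sg + S_sf\,(\partial_sS_sg)$ from $s=0$ to $s=t$, inheriting the Poisson bounds directly via the kernel of $t\partial_tS_t$ and providing the cleanest route to the $r(B)^\alpha 2^{-l(\varepsilon-\alpha)}$ decay.
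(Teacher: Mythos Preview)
Your primary decomposition has a genuine gap. After splitting $(I-S_t)(fg)$ via $f(x)g(x)-f(y)g(y)=(f(x)-f(y))g(x)+f(y)(g(x)-g(y))$, the first sub-term carries the \emph{pointwise} factor $g(x)$, so the linear Riesz representation for $f$ produces an expression of the shape $|g(x)|\cdot I_{1}(|\nabla f|\chi_{2^{l}B})(x)$ (a linear fractional integral times a point value of $g$), not the bilinear $\mathcal{I}_{1-\alpha}(|\nabla f|\chi,|g|\chi)(x)$. No pointwise inequality converts the former into the latter: $\mathcal{I}_{1-\alpha}$ integrates $|g|$ against a kernel, whereas your expression evaluates $g$ at a single point. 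If instead you try to take $L^q_w$ norms directly, H\"older forces the linear Muckenhoupt--Wheeden bound $I_{1-\alpha}:L^{p_1}_{w_1}\to L^{q_1}_{w_1^{q_1/p_1}}$ with $1/q_1=1/p_1-(1-\alpha)/n$, and this requires $w_1\in A_{p_1,q_1}$, which is \emph{not} implied by the genuinely bilinear hypothesis $(w_1,w_2)\in A_{(p_1,p_2),q}$. The second sub-term and the symmetrized covariance piece, after your reductions, mix a Riesz kernel in one variable with a Poisson kernel in the other, and a direct check (take $|\nabla g|$ concentrated near $x$ and $|f|$ supported in a far annulus) shows this is not pointwise dominated by the bilinear kernel $(|x-y|+|x-z|)^{-(2n-1+\alpha)}$ either.

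The paper avoids this by passing to the tensor product on $\re^{2n}$. With $F(y,z)=f(y)g(z)$ and $\mathcal P_t:=S_t\otimes S_t$ one writes
\[
f(x)g(x)-S_tf(x)S_tg(x)=F(x,x)-\mathcal P_t(F)(x,x)=-\int_0^{r(B)^m} s\,\partial_s\mathcal P_s\bigl(F-F_{B_s\times B_s}\bigr)(x,x)\,\frac{ds}{s},
\]
using $\partial_s S_s(1)=0$. The key step is the \emph{linear} representation formula in dimension $2n$,
\[
\bigl|F(y,z)-F_{B_s\times B_s}\bigr|\lesssim\iint_{B_s\times B_s}\frac{|\nabla_{(a,b)}F(a,b)|}{(|y-a|+|z-b|)^{2n-1}}\,da\,db,
\]
which, since $|\nabla F(a,b)|\le|\nabla f(a)||g(b)|+|f(a)||\nabla g(b)|$, places \emph{both} factors under the integral and generates the bilinear kernel intrinsically. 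After the annular decomposition (with telescoping averages $f_{2^kB_s}g_{2^kB_s}$ for the far pieces) and integration in $s$, which produces a logarithmic factor, the pointwise formula features operators $\mathcal J_{2^{l+1}B}$ with kernel $(|x-a|+|x-b|)^{-(2n-1)}\log\bigl(c\,r(B)/(|x-a|+|x-b|)\bigr)$ and decay $2^{-l\varepsilon}$. The factor $r(B)^\alpha$ does \emph{not} appear in the pointwise formula; it arises only in the subsequent norm estimate $\|\mathcal J_B\|_{L^{p_1}_{w_1}\times L^{p_2}_{w_2}\to L^q_w}\lesssim r(B)^\alpha$, obtained from Theorem~\ref{general2w} after checking that the logarithmic kernel still satisfies \eqref{growth} and \eqref{phiassumption}. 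Your closing remark about integrating $\partial_s(S_sf\cdot S_sg)$ is the right instinct---it is the diagonal of the paper's $\partial_s\mathcal P_s$---but as you phrased it, it still handles the factors one at a time; the $2n$-dimensional tensor viewpoint is precisely what makes the bilinear structure appear for free.
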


\begin{remark} It is possible to  consider two collections of operators ${\mathcal S}^1:=\{S^1_t\}_{t>0}$ and ${\mathcal S}^2:=\{S^2_t\}_{t>0}$, then the proof of Theorem \ref{thm:bp} holds true when estimating the oscillation
$ \|f g - S^1_{r(B)^m}(f) S^2_{r(B)^m}(g)\|_{L^q_w(B)}.$
\end{remark}

\begin{remark} Note that condition \eqref{gammadecay} assumes exponent $2n+\varepsilon$ rather than $n+\varepsilon.$  This is quite natural in our context since the proof of Theorem \ref{thm:bp} involves   the semigroup ${\mathcal P_t}:=S_t\otimes S_t$  which is expected to have decay for $2n$-dimensional variables.
\end{remark}

 \begin{remark}  The scaling of the result in Theorem~\ref{thm:bp}  is in accordance with the  classical situation corresponding to $\alpha=0$ and obtained in \cite{MMN}.
More precisely, a particular case of \cite[Theorem 1]{MMN} reads
\begin{equation}\label{particularcase}
\left\|f g - f_B g_B\right\|_{L^q_w(B)}  \le C\, (\left\|\nabla f \right\|_{L_{w_1}^{p_1}(B)} \left\|g \right\|_{L^{p_2}_{w_2}(B)} + \left\|f\right\|_{L^{p_1}_{w_1}(B)} \left\|\nabla g\right\|_{L^{p_2}_{w_2}(B)})
\end{equation}
for $\frac{1}{q}=\frac{1}{p_1}+\frac{1}{p_2}-\frac{1}{n}$, with $\frac{1}{p_1}+\frac{1}{p_2}  < 2$, $(w_1,w_2)\in A_{(p_1,p_2),q}$ and $w=w_1^{q/{p_1}}w_2^{q/{p_2}}$.  H\"older's inequality and the conditions on the weights imply
\[
\left\|f g - f_B g_B\right\|_{L^q_w(B)}  \le C\, r(B)^\alpha\, (\left\|\nabla f \right\|_{L_{w_1}^{p_1}(B)} \left\|g \right\|_{L^{p_2}_{w_2}(B)} + \left\|f\right\|_{L^{p_1}_{w_1}(B)} \left\|\nabla g\right\|_{L^{p_2}_{w_2}(B)})
\]
for  $\frac{1}{q}=\frac{1}{p_1}+\frac{1}{p_2}-\frac{1-\alpha}{n},$ $(w_1,w_2)\in A_{(p_1,p_2),q}$ and $w=w_1^{q/{p_1}}w_2^{q/{p_2}}$.

 For instance, let $p_1,\,p_2,\,q,\,\alpha,$ $w_1,\,w_2,\,w$ be as in the statement of Theorem~\ref{thm:bp}. Define $\bar{q}$ by $\frac{1}{\bar{q}}=\frac{1}{p_1}+\frac{1}{p_2}-\frac{1}{n}=\frac{1}{q}-\frac{\alpha}{n}$ and assume that $\bar{q}>0$ and that the pair $(w_1, w_2)$ is in $A_{(p_1,p_2),\bar{q}}.$ Setting $\bar{w}=w_1^{\bar{q}/{p_1}}w_2^{\bar{q}/p_2}$ and using H\"older's inequality and \eqref{particularcase}  we obtain
 \begin{align*}
 \left\|f g - f_B g_B\right\|_{L^q_w(B)}  &\le \left(\int_B\bar{w}\,\left(w_1^{\frac{q-\bar{q}}{p_1}}w_2^{\frac{q-\bar{q}}{p_2}}\right)^{(\frac{\bar{q}}{q})'}\right)^{\frac{1}{q(\bar{q}/q)'}} \, \left\|f g - f_B g_B\right\|_{L^{\bar{q}}_{\bar{w}(B)}}\\
 &\lesssim r(B)^\alpha\,(\left\|\nabla f \right\|_{L_{w_1}^{p_1}(B)} \left\|g \right\|_{L^{p_2}_{w_2}(B)} + \left\|f\right\|_{L^{p_1}_{w_1}(B)} \left\|\nabla g\right\|_{L^{p_2}_{w_2}(B)}).
 \end{align*}

 Note, however, that Theorem \ref{thm:bp} does not include the case $\alpha=0.$

 \end{remark}

\begin{remark}
Since we do not require spatial regularity on the kernels $p_t$ in \eqref{pbound}, our results can be extended  to every subset of ${\mathbb R^n}$ (not necessarily Lipschitz) by considering truncations as used in \cite{DY2}.
\end{remark}

Our proof of Theorem \ref{thm:bp} is based on an appropriate representation formula for the bilinear oscillations associated to the approximation of the identity and the boundedness properties of operators studied in \cite{MMN}. We present the details in the next two subsections.

\subsection{Representation formula.}\label{sec:repformula}
We start by introducing the collection of bilinear operators that shape our representation formula. For a ball $B\subset \re^n$, the operator ${\mathcal J}_{B}$ is defined as
\begin{equation}
{\mathcal J}_{B}(f_1,f_2)(x) :=\int_{B\times B} K(x,(a,b))  f_1(a)f_2(b)\, da \, db \quad x\in B , \label{eq:J} \end{equation}
with kernel
\[
K({x}, (a,b)):=\frac{1}{(\abs{x-a}+\abs{x-b})^{2n-1}}\log\left(\frac{8\,r(B)}{\abs{x-a}+\abs{x-b}}\right), \qquad  x,\,a,\,b\in B.
\]

\begin{theor}[Bilinear representation formula] \label{thm:representation}
Let $\mathcal{S}=\{S_t\}_{t>0}$  and $\mathcal{S}'=\{t\partial_t S_t\}_{t>0}$ be approximations of the identity in $\re^n$ of order $m>0$ and constant $\varepsilon$ in \eqref{gammadecay}. There exists a constant $C>0$ such that for every ball $B\subset \re^n$ and $x\in B,$
\begin{align*}
\lefteqn{\left|f(x) g(x) - S_{r(B)^m}(f)(x) S_{r(B)^m}(g)(x)\right|}  & \\
&   \le\, C\, \sum_{l\geq 0} 2^{-l\varepsilon} \left[\mathcal{J}_{2^{l+1}B}(|\nabla f|\chi_{2^{l+1}B},|g|\chi_{2^{l+1}B})(x)+ {\mathcal J}_{2^{l+1}B}(|f|\chi_{2^{l+1} B},|\nabla g|\chi_{2^{l+1}B})(x)\right].
\end{align*}
\end{theor}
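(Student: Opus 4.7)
The natural strategy, hinted at by the remark preceding the theorem, is to lift the bilinear problem to a linear problem on $\re^{2n}$ via the tensor-product semigroup. I put $\mathcal P_t:=S_t\otimes S_t$ and $F(y_1,y_2):=f(y_1)g(y_2)$, so that $\mathcal P_tF(x,x)=S_tf(x)S_tg(x)$ and the bilinear oscillation becomes the linear oscillation $F(\vec x)-\mathcal P_TF(\vec x)$ at $\vec x:=(x,x)$ with $T:=r(B)^m$. Before anything else I would verify that $\mathcal P_t$ behaves as an approximation of the identity on $\re^{2n}$ of order $m$: the product estimate $\gamma(\sigma_1)\gamma(\sigma_2)\lesssim\bar\gamma(\sigma_1+\sigma_2)$, valid because of the polynomial decay of $\gamma$, combined with $|\vec y-\vec x|\simeq |y_1-x|+|y_2-x|$, yields $|p_t(x_1,y_1)p_t(x_2,y_2)|\le u^{-2n}\bar\gamma(|\vec y-\vec x|/u)$ with $\bar\gamma(\lambda)\lesssim\min(1,\lambda^{-(2n+\varepsilon)})$, where $u:=t^{1/m}$. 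Since $t\partial_t\mathcal P_t=(t\partial_t S_t)\otimes S_t+S_t\otimes(t\partial_t S_t)$, its kernel inherits the same decay and has mean zero on $\re^{2n}$; this is exactly what the hypothesis on $\mathcal S'$ provides.

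The heart of the argument is a pointwise linear Sobolev--Poincar\'e-type representation on $\re^{2n}$:
$$
|F(\vec x)-\mathcal P_TF(\vec x)|\;\lesssim\;\int_{\re^{2n}}\frac{|\nabla F(\vec a)|}{|\vec a-\vec x|^{2n-1}}\,\Phi\!\left(\tfrac{|\vec a-\vec x|}{r(B)}\right)\,d\vec a,
$$
with $\Phi(\lambda)\simeq\log(1/\lambda)+1$ for $\lambda\le 1$ and $\Phi(\lambda)\lesssim\lambda^{-\varepsilon}$ for $\lambda\ge 1$. I would derive this along standard lines: the fundamental theorem $F-\mathcal P_TF=-\int_0^T t^{-1}(t\partial_t\mathcal P_tF)\,dt$; the mean-zero property to replace $F(\vec y)$ by $F(\vec y)-F(\vec x)$; the bound $|F(\vec y)-F(\vec x)|\le\int_0^{|\vec y-\vec x|}|\nabla F|(\vec x+r\vec\omega_y)\,dr$; and polar coordinates plus Fubini on $\re^{2n}$. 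The radial integration bundles into $\bar\Psi(\lambda):=\int_\lambda^\infty\sigma^{2n-1}\bar\gamma(\sigma)\,d\sigma$, which is bounded and satisfies $\bar\Psi(\lambda)\lesssim\lambda^{-\varepsilon}$ for $\lambda\ge 1$; the subsequent time integral $\int_0^{r(B)}\bar\Psi(R/u)\,du/u$ then yields the logarithm in the inner regime $R\le r(B)$ (because $\bar\Psi$ is $O(1)$ there and $\int du/u$ diverges) and the $\lambda^{-\varepsilon}$ decay in the outer regime. This is the main obstacle: the logarithm must be extracted from precisely this transitional regime, and the $\varepsilon$-decay outside hinges on $\bar\gamma$ inheriting the $2n+\varepsilon$ exponent from \eqref{gammadecay} (not merely $n+\varepsilon$).

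With the representation in hand, I would split $\re^{2n}=(2B)^2\cup\bigsqcup_{l\ge 1}\bigl((2^{l+1}B)^2\setminus(2^lB)^2\bigr)$ and bound $|\nabla F(\vec a)|\le|\nabla f(a_1)|\,|g(a_2)|+|f(a_1)|\,|\nabla g(a_2)|$. On the innermost piece $(2B)^2$ the log behavior of $\Phi$, together with $|\vec a-\vec x|\simeq|a_1-x|+|a_2-x|$, matches the kernel of $\mathcal J_{2B}$ up to a constant; on the $l$-th shell $\Phi\lesssim 2^{-l\varepsilon}$, and since $\log(8\cdot 2^{l+1}r(B)/(|a_1-x|+|a_2-x|))\gtrsim 1$ throughout $(2^{l+1}B)^2$, the Riesz-type kernel $(|a_1-x|+|a_2-x|)^{-(2n-1)}$ is absorbed into that of $\mathcal J_{2^{l+1}B}$. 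Summing over $l$ and separating the $|\nabla F|$ bound into its two pieces produces the stated representation with both $\mathcal J_{2^{l+1}B}(|\nabla f|\chi_{2^{l+1}B},|g|\chi_{2^{l+1}B})$ and $\mathcal J_{2^{l+1}B}(|f|\chi_{2^{l+1}B},|\nabla g|\chi_{2^{l+1}B})$ terms.
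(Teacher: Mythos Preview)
Your proposal is correct and arrives at the same conclusion, but by a genuinely different route from the paper. Both arguments lift to $\re^{2n}$ via $\mathcal P_t=S_t\otimes S_t$ and start from $F(\vec x)-\mathcal P_TF(\vec x)=-\int_0^T t\partial_t\mathcal P_tF(\vec x)\,\frac{dt}{t}$, but from here they diverge. The paper exploits the cancellation by subtracting the \emph{average} $F_{B_t\times B_t}$, applies the classical representation formula $|F(\vec y)-F_{B_t\times B_t}|\lesssim\iint_{B_t\times B_t}|\nabla F(\vec a)|\,|\vec y-\vec a|^{1-2n}\,d\vec a$ on each concentric ball, and then---because on the far annuli $C_l(B_t\times B_t)$ the ``wrong'' average has been subtracted---must telescope through the chain $|f_{2^{k+1}B_t}g_{2^{k+1}B_t}-f_{2^kB_t}g_{2^kB_t}|$, which produces an extra factor of $l$ compensated by the full product decay $2^{-l(n+\varepsilon)}$. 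You instead subtract the \emph{point value} $F(\vec x)$, use the line-integral bound $|F(\vec y)-F(\vec x)|\le\int_0^{|\vec y-\vec x|}|\nabla F(\vec x+r\omega)|\,dr$, and carry out the polar-coordinates/Fubini computation once and for all, obtaining a single global kernel $\Phi(|\vec a-\vec x|/r(B))/|\vec a-\vec x|^{2n-1}$ whose behaviour (logarithmic near $\vec x$, $\lambda^{-\varepsilon}$ at infinity) you then read off dyadically. Your path is shorter and avoids the telescoping entirely; the paper's path, on the other hand, stays closer to the localized representation formulas used elsewhere in \cite{MMN} and makes the role of the mean-value oscillation more explicit, which is perhaps more portable to the doubling-manifold and Carnot-group settings of Section~\ref{sec:extensions} where straight-line integrals are less convenient.
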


\begin{remark} As mentioned in the Introduction, since the approximation operator $S_{r(B)^m}$ is not a local operator, we cannot expect perfectly localized estimates as for the ``classical'' Poincar\'e inequality.
\end{remark}

\begin{proof}
We consider  the operator on $\re^{2n}$ given by $\mathcal{P}_t  := S_t \otimes S_t$, that is,
$$
\mathcal{P}_t(F)(x,x) = \int_{\re^n} \int_{\re^n} p_t(x,y) p_t(x,z) F(y,z) \, dy dz.
$$
For given functions $f$ and $g$ defined on $\re^n$, let $F(y,z):=f(y) g(z).$
Fix $B$ of radius $r(B),$ $x\in B$ and for each $t\in (0,r(B)^m)$ let $B_t$ be the ball of radius $t^{1/m}$ centered at $x\in \re^n$.
Then
\begin{align*}
F(x,x) - \mathcal{P}_{r(B)^m}(F)(x,x) & = - \int_0^{r(B)^m} t \partial_t \mathcal{P}_t(F)(x,x) \, \frac{dt}{t}\\
 & =  - \int_0^{r(B)^m} t \partial_t  \mathcal{P}_t(F- F_{B_t \times B_t})(x,x)\, \frac{dt}{t},
\end{align*}
where we used that $F_{B_t \times B_t}= f_{B _t}g_{B_t}$ is a constant and $\partial_t S_t(1) = 0$ for all $t > 0$.
The pointwise bounds \eqref{pbound} for the kernels $p_t(x,y)$ give
\begin{align*}
\left|t \partial_t \mathcal{P}_t  (F - F_{B_t \times B_t})(x,x) \right|& \\
& \hspace{-4cm} \lesssim \int_{\re^n} \int_{\re^n} {t^{-\frac{2n}{m}}}  \left(1+\frac{|x-y|}{t^{\frac{1}{m}}} \right)^{-2n-\varepsilon} \left(1+\frac{|x-z|}{t^{\frac{1}{m}}} \right)^{-2n-\varepsilon} |f(y)g(z) - f_{B_t} g_{B_t}| \, dy dz \\
&  \hspace{-4cm} \lesssim  \iint\limits_{B_t \times B_t} t^{-\frac{2n}{m}} \, \left(1+\frac{|x-y|}{t^{\frac{1}{m}}} \right)^{-2n-\varepsilon} \left(1+\frac{|x-z|}{t^{\frac{1}{m}}} \right)^{-2n-\varepsilon} |f(y)g(z) - f_{B_t} g_{B_t}| \, dy dz \\
& \hspace{-4cm} + \sum_{l \in \na}  \ \iint\limits_{C_l(B_t \times B_t)} t^{-\frac{2n}{m}} \,   \left(1+\frac{|x-y|}{t^{\frac{1}{m}}} \right)^{-2n-\varepsilon} \left(1+\frac{|x-z|}{t^{\frac{1}{m}}} \right)^{-2n-\varepsilon}|f(y)g(z) - f_{B_t} g_{B_t}| \, dy dz\\
& \hspace{-4cm} =: I_0(f,g,t)(x) + \sum_{l\in \na} I_l(f,g,t)(x),
\end{align*}
where for $l\geq 1$, $C_l(B_t\times B_t)$ denotes the annulus
$$ C_l(B_t \times B_t):= 2^{l}(B_t \times B_t) \setminus 2^{l-1}(B_t \times B_t).$$
We now proceed to estimating each of the terms $I_l(f,g,t),$ $l\ge 0.$

\subsection*{The bound for $I_0(f,g,t)$.}

Notice that for all $y, z \in B_t,$
\begin{equation}\label{repfg}
|f(y)g(z) - f_{B_t} g_{B_t}| \lesssim \iint_{B_t \times B_t}  \frac{|\nabla f(a)||g(b)| + |f(a)||\nabla g(b)| }{(|y-a|+|z-b|)^{2n-1}} \, da db.
\end{equation}
Indeed, the usual representation formula for a linear oscillation in $(\re^n)^2$ gives
$$
|F(y,z) - F_{B_t\times B_t}| \leq C \int_{B_t\times B_t} \frac{|\nabla F(a,b)|}{|(y,z)-(a,b)|^{2n-1}} dadb
$$
which yields \eqref{repfg}. Hence, we get
\begin{align*}
I_0(f,g,t)(x) & \lesssim \iint\limits_{B_t \times B_t} t^{-2n/m} |f(y)g(z) - f_{B_t} g_{B_t}| \, dy dz\\
& \lesssim \iint\limits_{B_t \times B_t} (|\nabla f(a)||g(b)| + |f(a)||\nabla g(b)|) I(a,b,t) \, da \,db,
\end{align*}
where
$$
I(a,b,t) := \ \ \iint\limits_{B_t \times B_t} \frac{t^{-2n/m}}{(|y-a|+|z-b|)^{2n-1}} \,dy \,dz.
$$
For $a,b\in B_t,$ we have
\begin{align}
I(a,b,t)  & \leq \
\iint\limits_{\genfrac{}{}{0pt}{}{|y-a|\leq 2t^{1/m}}{|z-b|\leq 2 t^{1/m}}} \ \frac{1}{(|y-a|+|z-b|)^{2n-1}}\frac{dy}{t^{n/m}} \frac{dz}{t^{n/m}} \nonumber \\
& \lesssim \int_{0}^{2t^{1/m}} \int_{0}^{2t^{1/m}} \frac{1}{(u+v)^{2n-1}}u^{n-1} v^{n-1} \frac{du}{t^{n/m}} \frac{dv}{t^{n/m}} \nonumber \\
\nonumber \\
& \lesssim t^{(-2n+1)/m} \int_{0}^{1} \int_{0}^{1} \frac{u^{n-1} v^{n-1}}{(u+v)^{2n-1}}  du dv \lesssim t^{(-2n+1)/m}, \label{eq:int}
\end{align}
where the last integral is controlled by separately estimating for $v\geq u$ and for $u \geq v$.
We conclude that, for $a,b\in B_t$,
\begin{align}
\iint\limits_{B_t\times B_t} \frac{1}{(|y-a|+|z-b|)^{2n-1}}\frac{dy}{t^{n/m}} \frac{dz}{t^{n/m}}
& \lesssim t^{(-2n+1)/m} \lesssim \frac{1}{(|x-a|+|x-b|)^{2n-1}}, \label{eq:aze} \end{align}
and therefore
 \begin{align*}
 I_0(f,g,t)(x) & \lesssim \iint\limits_{B_t \times B_t}  \frac{|\nabla f(a)||g(b)| + |f(a)||\nabla g(b)|}{(|x-a|+|x-b|)^{2n-1}} \, da \,db.
\end{align*}
Integration with respect to the variable $t\in(0,r(B)^m)$ yields,
\begin{align*}
& \int_0^{r(B)^m} I_0(f,g,t)(x) \, \frac{dt}{t}  \lesssim \int_0^{r(B)^m} \iint\limits_{B_t \times B_t} \frac{|\nabla f(a)||g(b)| + |f(a)||\nabla g(b)|}{(|x-a|+|x-b|)^{2n-1}} \, da \,db \, \frac{dt}{t} \\
& \lesssim \int_0^{r(B)^m} \iint\limits_{\genfrac{}{}{0pt}{}{|x-a|\leq t^{1/m}}{|x-b|\leq t^{1/m}}} \frac{|\nabla f(a)||g(b)| + |f(a)||\nabla g(b)|}{(|x-a|+|x-b|)^{2n-1}} \, da \,db \, \frac{dt}{t} \\
& \lesssim \iint\limits_{{2B\times 2B}}
\int_{\genfrac{}{}{0pt}{}{0\leq t\leq r(B)^m}{\genfrac{}{}{0pt}{}{|x-a|\leq t^{1/m}}{|x-b|\leq t^{1/m}}}}
 \frac{|\nabla f(a)||g(b)| + |f(a)||\nabla g(b)|}{(|x-a|+|x-b|)^{2n-1}} \, \frac{dt}{t} \, da \,db\\
& \lesssim \iint\limits_{2B\times 2B} \frac{|\nabla f(a)||g(b)| + |f(a)||\nabla g(b)|}{(|x-a|+|x-b|)^{2n-1}} \log\left( 1+\frac{r(B)^m}{\max\{|x-a|^m,|x-b|^m\}} \right) \, da \,db \\
& \lesssim \iint\limits_{2B\times 2B} \frac{|\nabla f(a)||g(b)| + |f(a)||\nabla g(b)|}{(|x-a|+|x-b|)^{2n-1}} \log\left( \frac{16 r(B)}{|x-a|+|x-b|} \right) \, da \,db \\
& \lesssim {\mathcal J}_{2B}(|\nabla f|,|g|)(x)+ {\mathcal J}_{2B}(|f|,|\nabla g|)(x),
\end{align*}
where the operator ${\mathcal J}_{2B}$ was defined in \eqref{eq:J}. It remains to  treat the terms $I_l(f,g,t)(x)$ with $l\geq 1$.

\subsection*{The bound for $I_l(f,g,t)$ with $l\geq 1$.}

Recall that  $I_l$ is given by
\begin{align*}
 I_l(f,g,t)(x) & := \iint\limits_{C_l(B_t \times B_t)} t^{-2n/m} \left(1+\frac{|x-y|}{t^{\frac{1}{m}}} \right)^{-2n-\varepsilon} \left(1+\frac{|x-z|}{t^{\frac{1}{m}}} \right)^{-2n-\varepsilon}\\
 & \times |f(y)g(z) - f_{B_t} g_{B_t}| \, dy dz,
\end{align*}
where $B_t=B(x,t^{1/m})$ (and therefore $x\in B_t$) and $C_l(B_t \times B_t):=2^{l}(B_t \times B_t) \setminus 2^{l-1}(B_t \times B_t)$. We have to estimate the oscillation $|f(y)g(z) - f_{B_t} g_{B_t}|,$ with $(y,z)\in C_l(B_t\times B_t)$, for which we consider the intermediate averages as follows:
 $$|f(y)g(z) - f_{B_t} g_{B_t}| \leq |f(y)g(z) - f_{2^{l} B_t} g_{2^{l} B_t}| + \sum_{k=0}^{l-1} \left|f_{2^{k+1} B_t} g_{2^{k+1} B_t}-f_{2^{k} B_t} g_{2^{k} B_t} \right|.$$
For all $k\in 0,..., l-1$, we use
 \begin{align*}
  & \left|f_{2^{k+1} B_t} g_{2^{k+1} B_t}-f_{2^{k} B_t} g_{2^{k} B_t} \right|  \lesssim (2^k t^{1/m})^{-2n}\  \iint\limits_{2^{k}B_t \times 2^k B_t} \ |f(u)g(v) - f_{2^{k+1} B_t} g_{2^{k+1} B_t}| du dv \\
 & \lesssim (2^k t^{1/m})^{-2n} \ \iint\limits_{2^{k+1}B_t \times 2^{k+1} B_t} \ |f(u)g(v) - f_{2^{k+1} B_t} g_{2^{k+1} B_t}| du dv.
 \end{align*}
 As done in \eqref{repfg} applied to the ball $2^{k+1} B_t$, we obtain that for $(u,v)\in 2^{k+1}B_t \times 2^{k+1} B_t$
 $$\left|f(u)g(v) - f_{2^{k+1} B_t} g_{2^{k+1} B_t} \right| \lesssim \ \iint\limits_{2^{k+1} B_t \times 2^{k+1} B_t} \ \frac{|\nabla f(a)||g(b)| + |f(a)||\nabla g(b)| }{(|u-a|+|v-b|)^{2n-1}} \, da db.$$
 Proceeding as in \eqref{eq:aze}, by replacing the ball $B_t$ with $2^{k+1} B_t$, we have that for $(a,b)\in 2^{k+1} B_t \times 2^{k+1} B_t$ and $(u,v)\in 2^{k+1} B_t \times 2^{k+1} B_t$,
 \begin{equation} \iint\limits_{2^{k+1}B_t \times2^{k+1} B_t} \ \frac{2^{-2kn} t^{-2n/m} dy dz}{(|u-a|+|v-b|)^{2n-1}} \lesssim (2^k t^{1/m})^{-(2n-1)} \lesssim (|x-a|+|x-b|)^{1-2n}. \label{equa} \end{equation}
Combining everything we have
 \begin{align*}
 & \left|f_{2^{k+1} B_t} g_{2^{k+1} B_t}-f_{2^{k} B_t} g_{2^{k} B_t} \right|\\
  & \lesssim \frac{2^{2kn} t^{2n/m}}{(2^k t^{1/m})^{2n}}
 \ \iint\limits_{2^{k+1} B_t \times 2^{k+1} B_t} \ \frac{|\nabla f(a)||g(b)| + |f(a)||\nabla g(b)| }{(|x-a|+|x-b|)^{2n-1}} \, da db \\
 & \lesssim \ \iint\limits_{2^{k+1} B_t \times 2^{k+1} B_t} \ \frac{|\nabla f(a)||g(b)| + |f(a)||\nabla g(b)| }{(|x-a|+|x-b|)^{2n-1}} \, da db.
 \end{align*}
We conclude that for $(y,z)\in 2^{l+1}(B_t \times B_t)$ (actually for any $y$ and $z$)
 \begin{align}
  |f(y)g(z) - f_{B_t} g_{B_t}| & \lesssim  |f(y)g(z) - f_{2^{l} B_t} g_{2^{l} B_t}|  \nonumber \\
 & \ +\sum_{k=0}^{l-1} \ \ \iint\limits_{2^{k+1} B_t \times 2^{k+1} B_t}  \frac{|\nabla f(a)||g(b)| + |f(a)||\nabla g(b)| }{(|x-a|+|x-b|)^{2n-1}} \, da db. \label{eq:poinc}
 \end{align}
Consequently,
 \begin{align*}
  I_l(f,g,t)(x) & \lesssim I_l^1(f,g,t)(x)+I_l^2(f,g,t)(x)
\end{align*}
with
\begin{align*}
 I_l^1(f,g,t)(x):= & \ \iint\limits_{C_l(B_t \times B_t)} \ \ \left[\left(1+\frac{|x-y|}{t^{\frac{1}{m}}} \right) \left(1+\frac{|x-z|}{t^{\frac{1}{m}}} \right)\right]^{-2n-\varepsilon} \\
 &\times  |f(y)g(z) - f_{2^{l} B_t} g_{2^{l} B_t}|  \frac{dydz}{t^{2n/m}}
\end{align*}
and
\begin{align*}
I_l^2(f,g,t)(x):= & \sum_{k=0}^{l} \ \ \iint\limits_{C_l(B_t \times B_t)} \ \ \Bigg[ \ \ \iint\limits_{2^{k+1} B_t \times 2^{k+1} B_t} \ \left(1+\frac{|x-y|}{t^{\frac{1}{m}}} \right)^{-2n-\varepsilon} \left(1+\frac{|x-z|}{t^{\frac{1}{m}}} \right)^{-2n-\varepsilon} \\
 & \hspace{0.5cm}   \frac{|\nabla f(a)||g(b)| + |f(a)||\nabla g(b)| }{(|x-a|+|x-b|)^{2n-1}} \, da db \Bigg] \frac{dydz}{t^{2n/m}}.
\end{align*}
The first term $I_l^1(f,g,t)(x)$ can be estimated in the same way  as the quantity $I_0(f,g,t)$ by replacing $B_t$ with $2^l B_t$. Since $(y,z)\in C_l(B_t \times B_t)$ and $x\in B_t$, the term
$$\left(1+\frac{|x-y|}{t^{\frac{1}{m}}} \right)^{-2n-\varepsilon} \left(1+\frac{|x-z|}{t^{\frac{1}{m}}} \right)^{-2n-\varepsilon}$$
provides an extra factor $2^{-l(\varepsilon+2n)}$ which partially compensates the normalization coefficient  $2^{2ln}$. So we have
\begin{align*}
\int_0^{r(B)^m} I_l^1(f,g,t)(x)  \frac{dt}{t} & \lesssim 2^{-l\varepsilon} \Big[\mathcal{J}_{2^{l+1}B}(|\nabla f|\chi_{2^{l+1}B},|g|\chi_{2^{l+1}B})(x)  \\
& \hspace{1cm} + {\mathcal J}_{2^{l+1}B}(|f|\chi_{2^{l+1} B},|\nabla g|\chi_{2^{l+1}B})(x)\Big].
\end{align*}
We now study the term related to $I_l^2(f,g,t)(x)$. Since $x\in B_t$,
$$ \iint\limits_{C_l(B_t \times B_t)}    \left(1+\frac{|x-y|}{t^{\frac{1}{m}}} \right)^{-2n-\varepsilon} \left(1+\frac{|x-z|}{t^{\frac{1}{m}}} \right)^{-2n-\varepsilon} \frac{dydz}{t^{2n/m}} \lesssim 2^{-l(\varepsilon+n)}.$$
Integrating in the variable $t\in(0,r(B)^m)$, we obtain
 \begin{align*}
  \int_0^{r(B)^m} I_l^2(f,g,t)(x) \, \frac{dt}{t}  & \\
 & \hspace{-3cm} \lesssim \int_0^{r(B)^m} 2^{-l(\varepsilon+n)} \sum_{k=0}^{l-1} \iint_{2^{k+1} B_t \times 2^{k+1} B_t}  \frac{|\nabla f(a)||g(b)| + |f(a)||\nabla g(b)|}{(|x-a|+|x-b|)^{2n-1}}  dadb \, \frac{dt}{t} \\
 & \hspace{-3cm} \lesssim l 2^{-l(\varepsilon+n)}  \iint_{2^lB \times 2^l B} \left(\int_{\genfrac{}{}{0pt}{}{0\leq t\leq r(B)^m}{\genfrac{}{}{0pt}{}{|x-a|\leq 2^lt^{1/m}}{|x-b|\leq 2^l t^{1/m}}}} \, \frac{dt}{t} \right)\frac{|\nabla f(a)||g(b)| + |f(a)||\nabla g(b)|}{(|x-a|+|x-b|)^{2n-1}} dadb \\
&  \hspace{-3cm} \lesssim l 2^{-l(\varepsilon+n)}  \iint_{2^lB \times 2^l B} \frac{|\nabla f(a)||g(b)| + |f(a)||\nabla g(b)|}{(|x-a|+|x-b|)^{2n-1}}\\
& \times \log\left(1+\frac{r(B)}{2^{-l}(|x-a|+|x-b|)} \right) dadb \\
& \hspace{-3cm} \lesssim l 2^{-l(\varepsilon+n)}  \iint_{2^lB \times 2^l B} \frac{|\nabla f(a)||g(b)| + |f(a)||\nabla g(b)|}{(|x-a|+|x-b|)^{2n-1}} \log\left(\frac{8\cdot2^{l+1} r(B)}{(|x-a|+|x-b|)} \right) dadb \\
& \hspace{-3cm} \lesssim l 2^{-l(\varepsilon+n)}  \Big[\mathcal{J}_{2^{l+1}B}(|\nabla f|\chi_{2^{l+1}B},|g|\chi_{2^{l+1}B})(x)+ {\mathcal J}_{2^{l+1}B}(|f|\chi_{2^{l+1} B},|\nabla g|\chi_{2^{l+1}B})(x) \Big].\end{align*}
Having obtained pointwise estimates both for $I_0(f,g,t)$ and $I_l(f,g,t)$, we can now conclude the proof of the theorem.

\subsection*{End of the proof of Theorem \ref{thm:representation}}

Using the estimates for $I_0(f,g,t)$, $I_l^1(f,g,t)$ and $I_l^2(f,g,t)$, we finally obtain that
\begin{align*}
 & \left|f(x) g(x) - S_{r(B)^m}(f)(x) S_{r(B)^m}(g)(x)\right| \\
& \lesssim  \sum_{l\geq 0} 2^{-l\varepsilon} (1+ l 2^{-ln})\\
& \times \Big[\mathcal{J}_{2^{l+1}B}(|\nabla f|\chi_{2^{l+1}B},|g|\chi_{2^{l+1}B})(x)+ {\mathcal J}_{2^{l+1}B}(|f|\chi_{2^{l+1} B},|\nabla g|\chi_{2^{l+1}B})(x)\Big] \\
& \lesssim \sum_{l\geq 0} 2^{-l\varepsilon} \Big[\mathcal{J}_{2^{l+1}B}(|\nabla f|\chi_{2^{l+1}B},|g|\chi_{2^{l+1}B})(x)+ {\mathcal J}_{2^{l+1}B}(|f|\chi_{2^{l+1} B},|\nabla g|\chi_{2^{l+1}B})(x)\Big].
\end{align*}
\end{proof}

\subsection{Boundedness properties of the operator ${\mathcal J}_B$}\label{sec:operatorJ}

Boundedness properties of the operators ${\mathcal J}_B$ follow from results for multilinear potential operators in the context of spaces of homogeneous type studied in \cite{MMN}. We use those results, which were recalled in Section \ref{sec:tools}, to prove the following proposition.

\begin{prop}\label{jbbound}  Let $p_1,p_2>1,$  $\,q>0,$ $0<\alpha\le 1$ and
$\frac{1}{q}=\frac{1}{p_1}+\frac{1}{p_2} - \frac{1-\alpha}{n}.$ If  $(w_1,w_2)$ belongs to the class $A_{(p_1,p_2),q}$ then  the operator $\mathcal{J}_B$ defined in \eqref{eq:J} satisfies
$$ \left\|{\mathcal J}_{B} \right\|_{L^{p_1}_{w_1}(B) \times L^{p_2}_{w_2}(B) \to L^q_{w}(B)} \lesssim \left[r(B)\right]^\alpha,$$
with a constant uniform in $B.$
\end{prop}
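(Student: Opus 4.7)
The plan is to realize $\mathcal{J}_B$ as pointwise dominated by $r(B)^{\alpha}$ times the bilinear Riesz potential $\mathcal{I}_{1-\alpha}$, after which the desired weighted estimate becomes an immediate consequence of part \eqref{thm:Ialpha} of Theorem~\ref{fracint}. The only mildly subtle step is trading the logarithm appearing in the kernel of $\mathcal{J}_B$ for a pure negative power of the distance.

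The pointwise observation is that for $x,a,b\in B$ one has $D:=|x-a|+|x-b|\le 4\,r(B)$, so $8\,r(B)/D\ge 2\ge 1$. The elementary inequality $\log t\le \alpha^{-1} t^{\alpha}$, valid for all $t\ge 1$ and $\alpha>0$ (one checks that $\alpha^{-1}t^\alpha-\log t$ is positive at $t=1$ and increasing on $[1,\infty)$), therefore yields
\[
K(x,(a,b))=\frac{\log(8\,r(B)/D)}{D^{2n-1}} \;\le\; \frac{C_\alpha\, r(B)^{\alpha}}{D^{\,2n-(1-\alpha)}},
\]
which is exactly $C_\alpha\, r(B)^{\alpha}$ times the kernel of $\mathcal{I}_{1-\alpha}$ from \eqref{def:multfracop}. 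Hence for every $x\in B$,
\[
|\mathcal{J}_B(f_1,f_2)(x)| \;\le\; C_\alpha\, r(B)^{\alpha}\, \mathcal{I}_{1-\alpha}\bigl(|f_1|\chi_B,|f_2|\chi_B\bigr)(x).
\]

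In the main range $\alpha\in(0,1)$ the order $1-\alpha$ lies in $(0,1)\subset(0,2n)$, the scaling $1/q=1/p_1+1/p_2-(1-\alpha)/n$ is exactly the one in part \eqref{thm:Ialpha} of Theorem~\ref{fracint}, and the hypothesis $(w_1,w_2)\in A_{(p_1,p_2),q}$ matches precisely. Applying that theorem to $\mathcal{I}_{1-\alpha}(|f_1|\chi_B,|f_2|\chi_B)$ and using the trivial monotonicity $\|\cdot\|_{L^q_w(B)}\le\|\cdot\|_{L^q_w(\re^n)}$ delivers the claimed operator-norm bound of order $r(B)^{\alpha}$, with constant uniform in $B$.

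At the endpoint $\alpha=1$ the argument above degenerates, since $\mathcal{I}_0$ is not a fractional integral and the majorized kernel becomes non-integrable near the diagonal. That case can be handled instead by applying Theorem~\ref{general2w} on the bounded space of homogeneous type $(B,|\cdot|,dx|_B)$: the growth conditions \eqref{growth} for $K$ are immediate, the functional $\varphi(B')\asymp r(B')^{-(2n-1)}\log(r(B)/r(B'))$ satisfies \eqref{phiassumption} for any $\delta\in(0,2n+1)$ since $t^{2n+1-\delta}\log(1/t)$ is bounded on $(0,1]$, and the elementary calculus estimate $\sup_{0<r'\le r(B)}(r')^{\alpha}\log(r(B)/r')\lesssim_\alpha r(B)^{\alpha}$ together with Remark~\ref{weightscomparison} bounds the supremum in \eqref{general2wq>1hm}--\eqref{general2wq<1hm} by $r(B)^{\alpha}$; Remark~\ref{remm} then converts this into the desired operator-norm bound. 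The decisive step in the whole argument is the pointwise kernel domination: once the logarithm is traded for a small power of $D$, every remaining ingredient is already in the paper.
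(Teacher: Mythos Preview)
Your argument for $\alpha\in(0,1)$ is correct and takes a genuinely different, more direct route than the paper. The paper never passes through $\mathcal{I}_{1-\alpha}$; instead it works entirely inside the abstract potential-operator framework of Theorem~\ref{general2w}, viewing $(B,|\cdot|,dx)$ as a space of homogeneous type, verifying the growth condition \eqref{growth} for the kernel and condition \eqref{phiassumption} for the associated functional $\varphi$, and then checking that the weight suprema \eqref{general2wq>1hm}--\eqref{general2wq<1hm} are bounded by a constant times $r(B)^\alpha$ via the $A_{(p_1,p_2),q}$ hypothesis and reverse H\"older (Remark~\ref{weightscomparison}). Your pointwise domination $\log(8r(B)/D)\le C_\alpha(r(B)/D)^\alpha$ sidesteps all of this and reduces the matter to the off-the-shelf weighted bound for $\mathcal{I}_{1-\alpha}$ in Theorem~\ref{fracint}\eqref{thm:Ialpha}. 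The gain is brevity; the cost is that the endpoint $\alpha=1$ must be handled separately, whereas the paper's argument treats all $\alpha\in(0,1]$ at once (and in fact only $\alpha<1$ is ever used, in Theorems~\ref{thm:bp} and~\ref{leibniz2}).

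Your sketch for $\alpha=1$ is essentially the paper's own argument, but one detail is off: the admissible range for $\delta$ in \eqref{phiassumption} is $(0,1)$, not $(0,2n+1)$. Indeed, with $t=r(B')/r(B'')\in(0,1]$ one needs $t\bigl(1+c\log(1/t)\bigr)\lesssim t^\delta$, which forces $1-\delta>0$; the paper obtains this from the elementary bound $\log(t')/\log(t)\lesssim (t'/t)^\gamma$ for $2\le t\le t'$ and $0<\gamma<1$. This does not affect the conclusion, since any single admissible $\delta$ suffices.
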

\begin{proof} Following the results in \cite{MMN}, we work in the space of homogeneous type $(B,\abs{\cdot -\cdot}, dx)$ noting that the constants in  \eqref{quasi}, \eqref{doublingMu}, \eqref{reversedoubling} are independent of $B$.

We will consider the kernel
\[
\tilde{K}((x,y),(a,b)):= \frac{1}{(\abs{x-a}+\abs{y-b})^{2n-1}}\log\left(\frac{8\,r(B)}{\abs{x-a}+\abs{y-b}}\right), \qquad  x,\,y,\,a,\,b\in B
\]
and check that $\tilde{K}$ satisfies \eqref{growth} and \eqref{phiassumption}. For condition \eqref{growth}, note  that for any $c>1$ the function $h(t)=\frac{1}{t^{2n-1}}\log(\frac{8r(B)}{t})$ satisfies $h(t)\le C h(t')$ if $t'\le c\,t$ and $t,\,t'\le 4r(B),$ for some $C>0$ independent of $B.$
Regarding condition \eqref{phiassumption}, recall that the ball with center $x\in B$ and radius $r>0$ in the space $(B,\abs{\cdot -\cdot}, dx)$ is $B(x,r) \cap B$ where $B(x,r)$ is the Euclidean ball in $\re^n$ of radius $r$ centered at $x.$  Since  for $x\in B$ and $r\lesssim r(B),$ $\abs{B(x,r)\cap B}\sim \abs{B(x,r)}=c_n\, r^n,$ we then have to prove  that there exists $\delta>0$ such that given $C_1>1$ there is $C_2>0$  independent of $B$ for which
\[
\frac{\varphi(B_1 \cap B)}{\varphi(B_2\cap B)}\le C_2\, \left(\frac{r_2}{r_1}\right)^{2n-\delta}, \]
 for all balls  $B_i=B(x_i,r_i),$ $x_i\in B,$  $r_i\le C_1r(B),$ $ B_1\cap B\subset B_2\cap B$,
where
\[
\varphi(B_i\cap B)=\sup\{K(x,a,b): x,\,a,\,b \in B_i\cap B, \, \abs{x-a}+\abs{x-b}\ge c\, r_i\}
\]
for some fixed positive small constant $c$ and $i=1,\,2$.
We have  $\varphi(B_i\cap B)=(\frac{1}{cr_i})^{2n-1}\log(\frac{8r(B)}{cr_i})$  which gives
\[
\frac{\varphi(B_1\cap B)}{\varphi(B_2\cap B)}\sim\left(\frac{r_2}{r_1}\right)^{2n-1} \, \frac{\log\left(\frac{8r(B)}{cr_1}\right)}{\log\left(\frac{8r(B)}{cr_2}\right)}\lesssim \left(\frac{r_2}{r_1}\right)^{2n-\delta}, \quad 0<\delta<1,
\]
since $\frac{\log(t')}{\log(t)}\lesssim (\frac{t'}{t})^{\gamma}$ for $2\le t\le t'$ and $0<\gamma<1.$

We now check that the assumptions on the weights $w_1,$ $w_2,$ and $w$ imply  \eqref{general2wq>1hm} if $q>1$ and \eqref{general2wq<1hm} if $q\le1$ with $u=w^{1/q}=w_1^{1/p_1}w_2^{1/p_2},$  $v_k=w_k^{1/p_k},$ $k=1,2.$ This means that we have to prove  that there exists $t>1$ such that
\begin{equation}\label{qlarger1}
\sup_{Q} \varphi(Q) |Q|^{\frac{1}{q} + \frac{1}{{p_1}'}+\frac{1}{{p_2'}}}
\left( \frac{1}{|Q|}\int_Q w^{t} dx \right)^{1/qt}
\prod_{j=1}^2 \left( \frac{1}{|Q|}\int_Q w_i^{-\frac{t}{p_i-1}} dx
\right)^{1/tp_i'} < \infty, \quad q>1,
\end{equation}
and
\begin{equation}\label{qsmaller1}
\sup_{Q} \varphi(Q) |Q|^{\frac{1}{q} + \frac{1}{{p_1}'}+\frac{1}{{p_2'}}}
\left( \frac{1}{|Q|}\int_Q w dx \right)^{1/q}
\prod_{j=1}^2 \left( \frac{1}{|Q|}\int_Q w_i^{-\frac{t}{p_i-1}} dx
\right)^{1/tp_i'} < \infty, \quad q\le1,
\end{equation}
where the sup is taken over all balls $Q$ in the space $(B,|\cdot-\cdot|,dx)$ with $r(Q)\lesssim r(B).$ The proofs follow using the same ideas  as in  Remark \ref{weightscomparison} .
Let $Q$ be a ball in the space $(B,|\cdot-\cdot|,dx)$  with $r(Q)\lesssim r(B);$ then $Q=B\cap B(x,r)$ for some $x\in B$ and $r>0,$  $r(Q)=r\lesssim r(B)$ and $|Q|\sim |B(x,r)|.$
Moreover, using the relation between $p_1,\,p_2,\,q$ and $\alpha$ as in the statement of the proposition,
\begin{align*}
\varphi(Q)|Q|^{\frac{1}{q} + \frac{1}{{p_1}'}+\frac{1}{{p_2'}}} & \sim \frac{1}{r(Q)^{2n-1}}\log\left(\frac{8r(B)}{c\,r(Q)}\right)\,r(Q)^{2n-1+\alpha}\\
&= r(Q)^\alpha\,\log\left(\frac{8r(B)}{c\,r(Q)}\right)\lesssim r(B)^\alpha.
\end{align*}
In addition,  the second factor in \eqref{qlarger1} is bounded  by
\begin{align}\label{qlarger1_2}
\sup_{x\in\re^n,r>0}
\left( \frac{1}{|B(x,r)|}\int_{B(x,r)} w^{t} dx \right)^{1/qt}
\prod_{j=1}^2 \left( \frac{1}{|B(x,r)|}\int_{B(x,r)} w_i^{-\frac{t}{p_i-1}} dx
\right)^{1/tp_i'}.
\end{align}
Since  $w,\,w_1^{-\frac{1}{p_1-1}},\,w_2^{-\frac{1}{p_2-1}}$ are $A_\infty$ weights (see Remark \ref{rem:weights}), there exists $t>1$ such that \eqref{qlarger1_2} is bounded by
\[
\sup_{x\in\re^n,r>0}
\left( \frac{1}{|B(x,r)|}\int_{B(x,r)} w \,dx \right)^{1/q}
\prod_{j=1}^2 \left( \frac{1}{|B(x,r)|}\int_{B(x,r)} w_i^{-\frac{1}{p_i-1}} dx
\right)^{1/p_i'}<\infty,
\]
where finiteness is due to $(\,w_1,w_2)$ satisfying the $A_{(p_1,p_2),q}$ condition. A similar reasoning applies to \eqref{qsmaller1}.
We conclude that \eqref{qlarger1} and \eqref{qsmaller1} are bounded by a multiple (independent of $B$) of $r(B)^\alpha.$

By Theorem~\ref{general2w} and Remark~\ref{remm} we have that $\mathcal{J}_B$ is bounded from $L^{p_1}_{w_1}(B)\times L^{p_2}_{w_2}(B)$ into $L^q_{w}(B)$  and the operator norm  is bounded by  a multiple (uniform on $B$) of $ r(B)^\alpha.$
\end{proof}

\subsection{Proof of Theorem \ref{thm:bp}}
 Let  $p_1,\,p_2,$ $q,$ $w_1,$ $w_2$ and $w$  be as in the statement of  Theorem~\ref{thm:bp}.
By  Proposition \ref{jbbound} we have
$$ \left\|{\mathcal J}_{2^l B} \right\|_{L^{p_1}_{w_1}(B) \times L^{p_2}_{w_2}(B) \to L^q_{w}(B)} \lesssim \left[2^l r(B)\right]^\alpha,$$
uniformly in $B$ and $l\geq 0$, this and Theorem \ref{thm:representation} imply
\begin{align*}
\lefteqn{\left\|f g - S_{r(B)^m}(f) S_{r(B)^m}(g)\right\|_{L^q_w(B)}} & \\
&   \lesssim \sum_{l\geq 0} 2^{-l\varepsilon} 2^{\alpha (l+1)} r(B)^\alpha \left[\left\|\nabla f \right\|_{L^{p_1}_{w_1}(2^{l+1}B)} \left\|g \right\|_{L^{p_2}_{w_2}(2^{l+1}B)} + \left\|f\right\|_{L^{p_1}_{w_1}(2^{l+1} B)} \left\|\nabla g\right\|_{L^{p_2}_{w_2}(2^{l+1}B)} \right],
\end{align*}
which concludes the proof of Theorem~\ref{thm:bp}.
\qed

\bigskip

Applying an analogous  proof to that of Theorem \ref{thm:bp}, we obtain the following result:

\begin{theor} \label{coro:bp} Under the same assumptions of Theorem
\ref{thm:bp},
\begin{align*}
\lefteqn{\left\|f g - S_{r(B)^m} \left[S_{r(B)^m}(f)
S_{r(B)^m}(g)\right]\right\|_{L^q_w(B)}} & & \\
& &  \le C\, r(B)^\alpha \sum_{l\geq 0} 2^{-l(\varepsilon-\alpha)}
\left[\left\|\nabla f \right\|_{L_{w_1}^{p_1}(2^{l+1}B)} \left\|g
\right\|_{L^{p_2}_{w_2}(2^{l+1}B)} +
\left\|f\right\|_{L^{p_1}_{w_1}(2^{l+1} B)} \left\|\nabla
g\right\|_{L^{p_2}_{w_2}(2^{l+1}B)} \right].
\end{align*}
  \end{theor}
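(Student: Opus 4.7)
The plan is to mimic the proof of Theorem~\ref{thm:bp}, replacing the bilinear semigroup $\mathcal{P}_t = S_t\otimes S_t$ (whose diagonal value is $S_t(f)(x)\,S_t(g)(x)$) with the triple composition $\tilde{\mathcal{P}}_t(f,g)(x) := S_t\bigl[S_t(f)\,S_t(g)\bigr](x)$, whose kernel in the variables $(y,z)$ is
$$\tilde p_t(x;y,z) \;:=\; \int_{\re^n} p_t(x,u)\, p_t(u,y)\, p_t(u,z)\,du.$$
The first step is to show that $\tilde p_t$ enjoys essentially the same scaled Poisson bound as the tensor-product kernel $p_t(x,y)\,p_t(x,z)$: namely, for some $\varepsilon'\in(0,\varepsilon]$,
$$\tilde p_t(x;y,z)\;\lesssim\; t^{-2n/m}\Bigl(1+\tfrac{|x-y|}{t^{1/m}}\Bigr)^{-2n-\varepsilon'}\Bigl(1+\tfrac{|x-z|}{t^{1/m}}\Bigr)^{-2n-\varepsilon'}.$$
This follows from a routine convolution estimate using \eqref{pbound} and \eqref{gammadecay}: for any $(x,y,z)$, a triangle-inequality case split on the region of integration forces at least one of $|x-u|$, $|u-y|$, $|u-z|$ to be comparable to the corresponding distance from $x$, so splitting the $u$-integral into these three regions together with $\int p_t(\cdot,\cdot) = 1$ yields the required two-factor decay in $(y,z)$.

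Next, since $S_t(1)=1$ implies $\tilde{\mathcal{P}}_t(1,1)=1$, for any constants $c_1,c_2$ one has $\tilde{\mathcal{P}}_t(c_1,c_2)=c_1c_2$; in particular, with $F(y,z)=f(y)g(z)$ and $F_{B_t\times B_t}=f_{B_t}g_{B_t}$,
$$f(x)g(x)-\tilde{\mathcal{P}}_{r(B)^m}(f,g)(x)=-\int_0^{r(B)^m} t\,\partial_t\tilde{\mathcal{P}}_t\bigl(F-F_{B_t\times B_t}\bigr)(x)\,\frac{dt}{t}.$$
The product rule expands $\partial_t\tilde{\mathcal{P}}_t$ into three terms, in each of which exactly one factor of $S_t$ is replaced by $t\partial_t S_t$. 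By hypothesis $\{t\partial_t S_t\}_{t>0}$ is itself an AOI of order $m$ with the same constant $\varepsilon$, so each of the three resulting kernels satisfies the same bound as $\tilde p_t$ above, up to constants.

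From this point the argument is a verbatim transcription of the analysis in Theorem~\ref{thm:representation}: one splits the $(y,z)$-integration into $B_t\times B_t$ and the annular regions $C_l(B_t\times B_t)$; controls the oscillation $|F(y,z)-F_{B_t\times B_t}|$ via telescoping through the intermediate averages $f_{2^k B_t}g_{2^k B_t}$ and the linear representation \eqref{repfg} applied on each $2^{k+1}B_t$; collapses the inner $(y,z)$-convolutions to the singular kernel $(|x-a|+|x-b|)^{1-2n}$ via \eqref{eq:int} and \eqref{equa}; and integrates in $t\in(0,r(B)^m)$ to produce the logarithmic factor $\log\!\bigl(8r(B)/(|x-a|+|x-b|)\bigr)$ which is exactly the kernel of $\mathcal{J}_{2^{l+1}B}$. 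The annular decay contributes the familiar gain $2^{-l\varepsilon}$. Finally, taking $L^q_w(B)$ norms and invoking Proposition~\ref{jbbound} with the $A_{(p_1,p_2),q}$ hypothesis on $(w_1,w_2)$ converts each summand's ball radius $r(2^{l+1}B)=2^{l+1}r(B)$ into the advertised factor $r(B)^\alpha\, 2^{-l(\varepsilon-\alpha)}$.

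The only genuinely new ingredient is the decay estimate for $\tilde p_t$ (and its three product-rule siblings); this I expect to be the main, though only mildly technical, obstacle. Once it is in place, every remaining step is imported from the proof of Theorem~\ref{thm:bp} with no essential change.
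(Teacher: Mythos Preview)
Your approach is the same as the paper's: write $fg - S_{r(B)^m}[S_{r(B)^m}(f)S_{r(B)^m}(g)] = -\int_0^{r(B)^m} t\partial_t\,\tilde{\mathcal P}_t(F)\,\frac{dt}{t}$ with $\tilde{\mathcal P}_t:=S_t\circ(S_t\otimes S_t)$, observe the cancellation $\tilde{\mathcal P}_t(\mathbf 1)=1$, check that the kernel of $t\partial_t\tilde{\mathcal P}_t$ obeys the same decay used in Theorem~\ref{thm:representation}, and then rerun that proof verbatim together with Proposition~\ref{jbbound}.

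One technical point deserves care. The product-form bound you state,
\[
\tilde p_t(x;y,z)\;\lesssim\; t^{-2n/m}\Bigl(1+\tfrac{|x-y|}{t^{1/m}}\Bigr)^{-2n-\varepsilon'}\Bigl(1+\tfrac{|x-z|}{t^{1/m}}\Bigr)^{-2n-\varepsilon'},
\]
does \emph{not} hold in general with $\varepsilon'>0$: testing at $y=z$ shows the left side decays only like $(1+|x-y|/t^{1/m})^{-2n-\varepsilon}$, which forces $2(2n+\varepsilon')\le 2n+\varepsilon$, i.e.\ $\varepsilon'>0$ only if $\varepsilon>2n$. What your case-split argument actually yields (and what suffices for the proof) is the \emph{combined} bound
\[
\tilde p_t(x;y,z)\;\lesssim\; t^{-2n/m}\Bigl(1+\tfrac{|x-y|+|x-z|}{t^{1/m}}\Bigr)^{-2n-\varepsilon},
\]
together with the annular integral estimate $\int_{C_l(B_t\times B_t)}|\tilde p_t(x;y,z)|\,dydz\lesssim 2^{-l(n+\varepsilon)}$ (obtained by splitting the $u$-integral according to whether $|x-u|$ or $|u-y|$ carries the large scale and using $\int|p_t|\lesssim 1$). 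These are exactly the two facts invoked in the treatment of $I_l^1$ and $I_l^2$ in Theorem~\ref{thm:representation}, so the rest of your outline goes through unchanged.
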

We will leave it to the reader  to check the details for the fact that the proof of Theorem \ref{thm:bp} still
holds after noting that   a similar representation formula can be used as we can write
$$ f g - S_{r(B)^m} \left[S_{r(B)^m}(f) S_{r(B)^m}(g)\right] = -\int_0^{r(B)^m} t \partial_t S_{t}\left[\mathcal{P}_t(F) \right]
\frac{dt}{t},$$
since $t \partial_t S_{t}\left[\mathcal{P}_t \right]$ satisfies the
same estimates as $t \partial_t \mathcal{P}_t$ and the cancellation
property $ t \partial_t S_{t}\left[\mathcal{P}_t ({\bf 1})\right] = 0.$

\section{Leibniz-type rules in Campanato-Morrey spaces associated to generalized approximations of identity} \label{sec:campanato}

In this section we apply Theorem \ref{thm:bp} to prove a Leibniz-type rule of the form \eqref{leibrule} where the spaces $X_1$, $X_2$, $Y_1$, $Y_2$ belong to the scale of the classical Campanato-Morrey spaces and the space $Z$ quantifies the oscillation $|fg - S_{r(B)^m}(f)S_{r(B)^m}(f)|$ of the product $fg$ in $L^q(B)$ where $B \subset \re^n$ is a Euclidean  ball in $\re^n$ (compare to  \eqref{leibcampanatomorrey}).
In this context, it will become clear how, as announced in the Introduction, the bilinear potential operators introduced in Section \ref{sec:tools} play the role that paraproducts and the bilinear Coifman-Meyer multipliers play in the proofs of the Sobolev-based Leibniz-type rules \eqref{katoponce}.

Next, we recall the definition of the classical Campanato-Morrey spaces and introduce notions of bilinear Campanato-Morrey spaces associated to approximations of the identity and  semigroups.

For $p >0$ and $\lambda \geq 0$ we say that $f \in L^1_{loc}(\re^n)$ belongs to the \emph{Campanato-Morrey space} $\mathcal{L}^{p,\lambda}(\re^n)$ if
\begin{equation}
\norm{f}{\mathcal{L}^{p,\lambda}(\re^n)}:=\sup\limits_{B \subset \re^n}\frac{1}{|B|^{\lambda}} \left(\frac{1}{|B|} \int_B |f(x)|^p \, dx \right)^{\frac{1}{p}}
\end{equation}
is finite. For $f, g \in L^1(\re^n)$ we say that the pair $(f,g)$ belongs to the \emph{bilinear Campanato-Morrey space} $L_{{\mathcal S} \otimes  {\mathcal S} }^{p,\lambda}(\re^n)$ associated to an approximation of the identity  ${\mathcal S}=\{S_t\}_{t>0}$ of order $m>0$ if
\begin{equation}\label{biCM}
\norm{(f,g)}{L_{\mathcal{S} \otimes \mathcal{S}}^{p,\lambda}(\re^n)}: = \sup\limits_{B \subset \re^n} \frac{1}{|B|^{\lambda}} \left(\frac{1}{|B|} \int_B |f(x)g(x) - S_{r(B)^m}(f)(x)S_{r(B)^m}(g)(x)|^p \, dx \right)^{\frac{1}{p}}
\end{equation}
is finite. We use the notation ${\mathcal S} \otimes {\mathcal S}$ to signify that the oscillation in question coincides with the tensorial oscillation $|(f \otimes g)(x,y) - (S \otimes S)_{r(B)^m} (f \otimes g) (x,y)|$, for $x, y \in B$, restricted to the diagonal $x=y$. These new spaces $L_{{\mathcal S} \otimes {\mathcal S}}^{p,\lambda}(\re^n)$ arise as  natural bilinear counterparts to the Campanato-Morrey spaces $L_{\mathcal{S}}^{p,\lambda}(\re^n)$ associated to $\mathcal{S}$ introduced by Duong and Yan in \cite{DY1, DY2}. In this case, $f \in L_{\mathcal{S}}^{p,\lambda}(\re^n)$ if
\begin{equation}\label{CMPt}
\norm{f}{L_{\mathcal{S}}^{p,\lambda}(\re^n)}: = \sup\limits_{B \subset \re^n} \frac{1}{|B|^{\lambda}} \left(\frac{1}{|B|} \int_B |f(x) - S_{r(B)^m}(f)(x)|^p \, dx \right)^{\frac{1}{p}} < \infty.
\end{equation}

\begin{theor}\label{leibniz2} Let ${\mathcal S}:=\{S_t\}_{t>0}$ and ${\mathcal S'}:=\{t\partial_t S_t\}_{t>0}$ be   approximations of the identity of order $m>0$ in $\re^n$  and constant $\varepsilon$ in \eqref{gammadecay},  $1<p_1,p_2<\infty,$  $0<\alpha<\min(\varepsilon,1)$ and $q>0$ such that
$\frac{1}{q}=\frac{1}{p_1}+\frac{1}{p_2} - \frac{1-\alpha}{n}.$
Given $\lambda_1, \lambda_2 \geq 0$ set
$
\lambda = \frac{1}{n}+ \lambda_1 + \lambda_2
$
and assume that
$
\varepsilon > n \left(\lambda + \frac{1}{q} \right).
$
Then there exists a structural constant $C > 0$ such that the following Leibniz-type rule holds true
\begin{equation}\label{leibCMPt}
\norm{(f,g)}{L_{{\mathcal S} \otimes {\mathcal S}}^{q,\lambda}(\re^n)} \leq C \left(\norm{\nabla f}{\mathcal{L}^{p_1,\lambda_1}(\re^n)} \norm{g}{\mathcal{L}^{p_2,\lambda_2}(\re^n)} + \norm{f}{\mathcal{L}^{p_1,\lambda_1}(\re^n)} \norm{\nabla g}{\mathcal{L}^{p_2,\lambda_2}(\re^n)}\right).
\end{equation}

\end{theor}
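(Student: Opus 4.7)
\medskip

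\textbf{Proof plan.} The strategy is to apply Theorem \ref{thm:bp} on each ball with the trivial weights $w_1=w_2=w\equiv 1$ (which clearly satisfy $A_{(p_1,p_2),q}$) and then use the definition of the linear Campanato--Morrey norms to control the local $L^{p_i}$-norms appearing on the right-hand side, summing the geometric series that results. The condition $\varepsilon>n(\lambda+1/q)$ will turn out to be exactly what is needed for this series to converge.

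\medskip

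First I fix an arbitrary Euclidean ball $B\subset\re^n$. Recalling the definition of $L^{q,\lambda}_{\mathcal{S}\otimes\mathcal{S}}(\re^n)$ in \eqref{biCM}, it suffices to prove that
\[
\frac{1}{|B|^{\lambda+1/q}} \bigl\|fg - S_{r(B)^m}(f)S_{r(B)^m}(g)\bigr\|_{L^q(B)}
\]
is bounded by the right-hand side of \eqref{leibCMPt}, uniformly in $B$. Invoking Theorem \ref{thm:bp} with $w_1=w_2=1$, this expression is controlled by
\[
\frac{r(B)^\alpha}{|B|^{\lambda+1/q}}\sum_{l\geq 0} 2^{-l(\varepsilon-\alpha)} \Bigl[\|\nabla f\|_{L^{p_1}(2^{l+1}B)}\|g\|_{L^{p_2}(2^{l+1}B)} + \|f\|_{L^{p_1}(2^{l+1}B)}\|\nabla g\|_{L^{p_2}(2^{l+1}B)}\Bigr].
\]

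\medskip

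Next, for any function $h\in\mathcal{L}^{p,\lambda'}(\re^n)$ the definition of the Campanato--Morrey norm gives $\|h\|_{L^p(2^{l+1}B)}\le \|h\|_{\mathcal{L}^{p,\lambda'}(\re^n)}\,|2^{l+1}B|^{\lambda'+1/p}$. Applying this to each of the four factors and using $|2^{l+1}B|=2^{(l+1)n}|B|$, each summand is bounded by
\[
2^{-l(\varepsilon-\alpha)}\,2^{(l+1)n(\lambda_1+\lambda_2+1/p_1+1/p_2)}\,|B|^{\lambda_1+\lambda_2+1/p_1+1/p_2}\,\mathcal{N}(f,g),
\]
where $\mathcal{N}(f,g):=\|\nabla f\|_{\mathcal{L}^{p_1,\lambda_1}}\|g\|_{\mathcal{L}^{p_2,\lambda_2}}+\|f\|_{\mathcal{L}^{p_1,\lambda_1}}\|\nabla g\|_{\mathcal{L}^{p_2,\lambda_2}}$. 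Using the hypotheses $\lambda_1+\lambda_2=\lambda-1/n$ and $1/p_1+1/p_2=1/q+(1-\alpha)/n$, a direct computation yields
\[
\lambda_1+\lambda_2+\tfrac{1}{p_1}+\tfrac{1}{p_2}=\lambda+\tfrac{1}{q}-\tfrac{\alpha}{n}.
\]
Consequently
\[
\frac{r(B)^\alpha\,|B|^{\lambda_1+\lambda_2+1/p_1+1/p_2}}{|B|^{\lambda+1/q}}\;=\;r(B)^\alpha\,|B|^{-\alpha/n}\;\lesssim\;1,
\]
so the $B$-dependent prefactor is uniformly bounded, and the $l$-dependent exponent becomes $-l(\varepsilon-\alpha)+ln(\lambda+1/q-\alpha/n)=-l\bigl(\varepsilon-n(\lambda+1/q)\bigr)$.

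\medskip

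The assumption $\varepsilon>n(\lambda+1/q)$ therefore guarantees that $\sum_{l\geq 0}2^{-l(\varepsilon-n(\lambda+1/q))}<\infty$. Collecting everything gives
\[
\frac{1}{|B|^{\lambda+1/q}}\bigl\|fg - S_{r(B)^m}(f)S_{r(B)^m}(g)\bigr\|_{L^q(B)}\;\lesssim\;\mathcal{N}(f,g),
\]
with a constant independent of $B$, which is exactly \eqref{leibCMPt}. The only delicate point in the argument is the bookkeeping that matches the Sobolev-type scaling of $(p_1,p_2,q,\alpha)$ with the additive condition on $(\lambda,\lambda_1,\lambda_2)$ so as to produce the single convergence requirement $\varepsilon>n(\lambda+1/q)$; once that balance is checked, no further analytic tools beyond Theorem \ref{thm:bp} are needed.
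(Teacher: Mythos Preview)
Your proof is correct and follows essentially the same approach as the paper's: apply the unweighted case of Theorem~\ref{thm:bp}, bound each local $L^{p_i}$-norm by the corresponding Campanato--Morrey norm times $|2^{l+1}B|^{\lambda_i+1/p_i}$, and verify that the scaling relations collapse the resulting geometric series into the single convergence condition $\varepsilon>n(\lambda+1/q)$. The paper's proof is organized around the auxiliary quantity $s=\lambda_1+\lambda_2+1/p_1+1/p_2$ and the identity $\alpha+ns=n(\lambda+1/q)$, which is exactly your computation in slightly different notation.
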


\begin{proof}  From Theorem \ref{thm:bp} we have
\begin{align*}
\lefteqn{\left\|f g - S_{r(B)^m}(f) S_{r(B)^m}(g)\right\|_{L^q(B)}} & & \\
& &  \lesssim r(B)^\alpha \sum_{l\geq 0} 2^{-l(\varepsilon-\alpha)} \,\left( \left\|\nabla f \right\|_{L^{p_1}(2^{l}B)} \left\|g \right\|_{L^{p_2}(2^{l}B)} +  \left\|f\right\|_{L^{p_1}(2^{l} B)} \left\|\nabla g\right\|_{L^{p_2}(2^{l}B)} \right).
\end{align*}
By writing
$$
 \left\|\nabla f \right\|_{L^{p_1}(2^{l}B)} = |2^l B|^{\lambda_1 + \frac{1}{p_1}} \frac{1}{|2^l B|^{\lambda_1}}\left( \frac{1}{|2^l B|} \int_{2^l B} |\nabla f|^{p_1} \right)^{\frac{1}{p_1}} \leq |2^l B|^{\lambda_1 + \frac{1}{p_1}} \norm{\nabla f}{\mathcal{L}^{p_1,\lambda_1}(\re^n)}
$$
and
$$
 \left\|g \right\|_{L^{p_2}(2^{l}B)} = |2^l B|^{\lambda_2 + \frac{1}{p_2}} \frac{1}{|2^l B|^{\lambda_2}}\left( \frac{1}{|2^l B|} \int_{2^l B} |g|^{p_2} \right)^{\frac{1}{p_2}} \leq |2^l B|^{\lambda_2 + \frac{1}{p_2}} \norm{g}{\mathcal{L}^{p_2,\lambda_2}(\re^n)},
$$
and similarly with $\left\|f \right\|_{L^{p_1}(2^{l}B)}$ and $\left\|\nabla g\right\|_{L^{p_2}(2^{l}B)},$ and by setting $s:=\lambda_1 + \lambda_2 + \frac{1}{p_1} + \frac{1}{p_2}$, we obtain
\begin{align*}
& r(B)^\alpha \sum_{l\geq 0} 2^{-l(\varepsilon-\alpha)} \,\left( \left\|\nabla f \right\|_{L^{p_1}(2^{l}B)} \left\|g \right\|_{L^{p_2}(2^{l}B)} +  \left\|f\right\|_{L^{p_1}(2^{l} B)} \left\|\nabla g\right\|_{L^{p_2}(2^{l}B)} \right) \\
& \leq |B|^{\frac{\alpha}{n}+ s} \sum_{l\geq 0} 2^{-l(\varepsilon-\alpha - ns)} \,\left( \norm{\nabla f}{\mathcal{L}^{p_1,\lambda_1}(\re^n)} \norm{g}{\mathcal{L}^{p_2,\lambda_2}(\re^n)} + \norm{ f}{\mathcal{L}^{p_1,\lambda_1}(\re^n)}\norm{\nabla g}{\mathcal{L}^{p_2,\lambda_2}(\re^n)} \right) \\
& \leq C |B|^{\frac{\alpha}{n}+ s}  \left( \norm{\nabla f}{\mathcal{L}^{p_1,\lambda_1}(\re^n)} \norm{g}{\mathcal{L}^{p_2,\lambda_2}(\re^n)} + \norm{ f}{\mathcal{L}^{p_1,\lambda_1}(\re^n)}\norm{\nabla g}{\mathcal{L}^{p_2,\lambda_2}(\re^n)} \right),
\end{align*}
since $\alpha + ns = n \left(  \frac{\alpha}{n}+ \frac{1}{p_1} + \frac{1}{p_2} + \lambda_1 + \lambda_2 \right) = n \left( \lambda + \frac{1}{q}\right) < \varepsilon$.
Consequently, using that $\lambda + \frac{1}{q} = \frac{\alpha}{n}+ s$,
\begin{align*}
 & \hspace{-2cm} \frac{1}{|B|^{\lambda}} \left(\frac{1}{|B|} \int_B |f(x)g(x) - S_{r(B)^m}(f)(x)S_{r(B)^m}(g)(x)|^q \, dx \right)^{\frac{1}{q}}  \\
& = \frac{1}{|B|^{\frac{\alpha}{n}+ s}}\left\|f g - S_{r(B)^m}(f) S_{r(B)^m}(g)\right\|_{L^q(B)}\\
 & \leq C   \left( \norm{\nabla f}{\mathcal{L}^{p_1,\lambda_1}(\re^n)} \norm{g}{\mathcal{L}^{p_2,\lambda_2}(\re^n)} + \norm{ f}{\mathcal{L}^{p_1,\lambda_1}(\re^n)}\norm{\nabla g}{\mathcal{L}^{p_2,\lambda_2}(\re^n)} \right),
\end{align*}
and \eqref{leibCMPt} follows.
\end{proof}

In relation with \eqref{CMPt}, we define another  suitable notion of Campanato-Morrey spaces associated
to an approximation of the identity ${\mathcal S}=\{S_t\}$: a function $f$ belongs to $\tilde{L}_{\mathcal
S}^{p,\lambda}(\re^n)$ if
$$ \|f\|_{\tilde{L}_{\mathcal S}^{p,\lambda}(\re^n)} := \sup\limits_{B \subset
\re^n} \inf_{h\in L^1_{loc}} \frac{1}{|B|^{\lambda}} \left(\frac{1}{|B|}
\int_B |f(x) - S_{r(B)^m}(h)(x)|^p \, dx \right)^{\frac{1}{p}} < \infty,$$
where  the supremum is taken over all Euclidean balls $B\subset \re^n.$
Then, we have the following Leibniz-type rule:

\begin{theor}\label{leibniz3} Let ${\mathcal S}:=\{S_t\}_{t>0}$ and ${\mathcal
S'}:=\{t\partial_t S_t\}_{t>0}$ be   approximations of the identity in
$\re^n$ of order $m>0$ and constant $\varepsilon$ in \eqref{gammadecay},  $1<p_1,p_2<\infty,$
$0<\alpha<\min(\varepsilon,1)$ and $q>0$ such that
$\frac{1}{q}=\frac{1}{p_1}+\frac{1}{p_2} - \frac{1-\alpha}{n}.$
Given $\lambda_1, \lambda_2 \geq 0$ set
$
\lambda = \frac{1}{n}+ \lambda_1 + \lambda_2
$
and assume that
$
\varepsilon > n \left(\lambda + \frac{1}{q} \right).
$
Then there exists a structural constant $C > 0$ such that the following
Leibniz-type rule holds true
\begin{equation}\label{leibCMPt-bis}
\norm{fg}{\tilde{L}_{{\mathcal S}}^{q,\lambda}(\re^n)} \leq C \left(\norm{\nabla
f}{\mathcal{L}^{p_1,\lambda_1}(\re^n)}
\norm{g}{\mathcal{L}^{p_2,\lambda_2}(\re^n)} +
\norm{f}{\mathcal{L}^{p_1,\lambda_1}(\re^n)} \norm{\nabla
g}{\mathcal{L}^{p_2,\lambda_2}(\re^n)}\right).
\end{equation}
\end{theor}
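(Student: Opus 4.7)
The plan is to reduce Theorem \ref{leibniz3} to Theorem \ref{coro:bp} in essentially the same way that Theorem \ref{leibniz2} was derived from Theorem \ref{thm:bp}. The crucial point is that the norm $\|\cdot\|_{\tilde{L}_{\mathcal{S}}^{q,\lambda}(\re^n)}$ contains an infimum over test functions $h$, so we only need to exhibit one good choice of $h$ for each ball $B$. The natural candidate, tailored to match the left-hand side of Theorem \ref{coro:bp}, is $h_B := S_{r(B)^m}(f)\,S_{r(B)^m}(g)$.

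First, I would fix an arbitrary Euclidean ball $B \subset \re^n$ and, inside the infimum defining the $\tilde{L}_{\mathcal{S}}^{q,\lambda}$-seminorm of $fg$, select precisely $h=h_B$ above. With this choice,
\[
\left(\frac{1}{|B|}\int_B |f(x)g(x) - S_{r(B)^m}(h_B)(x)|^q dx\right)^{1/q}
= \frac{1}{|B|^{1/q}}\,\left\| fg - S_{r(B)^m}\!\left[S_{r(B)^m}(f)S_{r(B)^m}(g)\right]\right\|_{L^q(B)},
\]
which is exactly the quantity estimated (in its unweighted form, $w_1=w_2\equiv 1$) by Theorem \ref{coro:bp}. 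Applying that theorem yields the dyadic bound
\[
r(B)^\alpha \sum_{l\ge 0} 2^{-l(\varepsilon-\alpha)}\Bigl(\|\nabla f\|_{L^{p_1}(2^{l+1}B)}\|g\|_{L^{p_2}(2^{l+1}B)} + \|f\|_{L^{p_1}(2^{l+1}B)}\|\nabla g\|_{L^{p_2}(2^{l+1}B)}\Bigr).
\]

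Next, I would handle each dyadic Lebesgue norm exactly as in the proof of Theorem \ref{leibniz2}: dominate $\|\nabla f\|_{L^{p_1}(2^{l+1}B)}$ by $|2^{l+1}B|^{\lambda_1+1/p_1}\|\nabla f\|_{\mathcal{L}^{p_1,\lambda_1}(\re^n)}$, and similarly for the other three factors. Setting $s := \lambda_1+\lambda_2+1/p_1+1/p_2$, the bound becomes
\[
|B|^{\alpha/n + s}\sum_{l\ge 0} 2^{-l(\varepsilon - \alpha - ns)}\Bigl(\|\nabla f\|_{\mathcal{L}^{p_1,\lambda_1}}\|g\|_{\mathcal{L}^{p_2,\lambda_2}} + \|f\|_{\mathcal{L}^{p_1,\lambda_1}}\|\nabla g\|_{\mathcal{L}^{p_2,\lambda_2}}\Bigr).
\]
The geometric series converges precisely because $\alpha+ns = n(\lambda + 1/q) < \varepsilon$, which is the standing hypothesis. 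Dividing by $|B|^{\lambda + 1/q} = |B|^{\alpha/n + s}$ and taking the supremum over balls $B$ (and noting that the infimum over $h$ can only decrease the expression further) yields \eqref{leibCMPt-bis}.

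The only substantive step beyond bookkeeping is invoking Theorem \ref{coro:bp} rather than Theorem \ref{thm:bp}; the remark immediately following the statement of Theorem \ref{coro:bp} already supplies the representation $fg - S_{r(B)^m}[\mathcal{P}_{r(B)^m}F] = -\int_0^{r(B)^m} t\partial_t S_t[\mathcal{P}_t F]\,dt/t$ and the observation that $S_{t}[\mathcal{P}_t \mathbf{1}]$ has vanishing $t$-derivative, so no new kernel estimate is required. Thus the anticipated main obstacle is conceptual rather than technical: recognizing that the \emph{infimum} built into $\tilde{L}_{\mathcal{S}}^{q,\lambda}$ is exactly what allows one to replace the tensor operator $S_{r(B)^m}\otimes S_{r(B)^m}$ acting on $(f,g)$ with the single operator $S_{r(B)^m}$ acting on the auxiliary function $h_B = S_{r(B)^m}(f)S_{r(B)^m}(g)$, matching the form of the $\tilde{L}$-seminorm while preserving the bilinear representation formula.
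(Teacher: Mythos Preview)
Your proposal is correct and follows essentially the same approach as the paper: choose $h = S_{r(B)^m}(f)\,S_{r(B)^m}(g)$ in the infimum defining $\|fg\|_{\tilde{L}_{\mathcal{S}}^{q,\lambda}}$, apply Theorem~\ref{coro:bp} in place of Theorem~\ref{thm:bp}, and then repeat verbatim the Campanato--Morrey bookkeeping from the proof of Theorem~\ref{leibniz2}.
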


The proof follows by estimating the norm
$$ \sup\limits_{B \subset \re^n} \inf_{h\in L^1_{loc}}
\frac{1}{|B|^{\lambda}} \left(\frac{1}{|B|} \int_B |f(x) g(x) -
S_{r(B)^m}(h)(x)|^p \, dx \right)^{\frac{1}{p}}$$
with $h=S_{r(B)^m}(f) S_{r(B)^m}(g)$  and following the
arguments in Theorem~\ref{leibniz2} by invoking   Theorem \ref{coro:bp} instead of Theorem
\ref{thm:bp}.

\section{Extensions to  doubling Riemannian manifolds and Carnot groups} \label{sec:extensions}

\subsection{Doubling Riemannian manifolds}\label{sec:dm}

Let $(M,\rho,d\mu)$ be a doubling Riemannian manifold, this is  a space of homogeneous type with a gradient vector field $\nabla$ (e.g. a complete Riemannian manifold with nonnegative Ricci curvature).

An approximation of the identity  of order $m>0$  in $M$ is a collection of operators ${\mathcal S}:=\{S_t\}_{t>0}$ acting on functions defined on $M,$
\[
S_tf(x)=\int_{M}p_t(x,y)f(y)\,d\mu(y),
\]
such that for each $t>0$ the kernels $p_t$ satisfy $\int_{M}p_t(x,y)\,d\mu(y)=1$ for all $x$  and  the \emph{scaled Poisson bound}
\begin{equation}\label{pboundho}
\abs{p_t(x,y)}\le \mu(B_\rho(x,t^{1/m}))^{-1}\,\gamma\left(\frac{\rho(x,y)}{t^{1/m}}\right),
\end{equation}
where $\gamma:[0,\infty)\to[0,\infty)$ is a bounded, decreasing function such that
 \begin{equation}\label{gammadecayho}
\lim_{r\to\infty} r^{2n+\varepsilon}\gamma(r)=0,\qquad \text{ for some }\, \varepsilon>0.
\end{equation}

\begin{theor} \label{thm:bpmanifold}  Assume $(M,\rho,\mu)$ is a doubling Riemannian manifold.
Let ${\mathcal S}:=\{S_t\}_{t>0}$  and ${\mathcal S'}:=\{t\partial_t S_t\}_{t>0}$ be approximations of the identity in $M$ of order $m>0$ and constant $\varepsilon$ in \eqref{gammadecayho},
 $1<p_1,p_2<\infty,$ $q>0,$ and  $0<\alpha <\min\{1,\varepsilon\}$ such that
$\frac{1}{q}=\frac{1}{p_1}+\frac{1}{p_2} - \frac{1-\alpha}{n}.$
Then  there exists a  constant $C$ such that for all  balls $B\subset M$
\begin{align*}
\lefteqn{\left\|f g - S_{r(B)^m}(f) S_{r(B)^m}(g)\right\|_{L^q(B)}} & & \\
& &  \le C\, r(B)^\alpha \sum_{l\geq 0} 2^{-l(\varepsilon-\alpha)} \left[\left\|\nabla f \right\|_{L^{p_1}(2^{l+1}B)} \left\|g \right\|_{L^{p_2}(2^{l+1}B)} + \left\|f\right\|_{L^{p_1}(2^{l+1} B)} \left\|\nabla g\right\|_{L^{p_2}(2^{l+1}B)} \right].
\end{align*}
\end{theor}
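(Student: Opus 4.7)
The plan is to reproduce the scheme used for Theorem \ref{thm:bp} in the Euclidean setting, replacing each Euclidean ingredient by its counterpart on the doubling Riemannian manifold $(M,\rho,\mu)$. More precisely, I will (i) establish an analog of the bilinear representation formula of Theorem \ref{thm:representation} adapted to the metric measure structure of $M$, and (ii) transfer the boundedness of the logarithmic potential operator $\mathcal{J}_B$ from Proposition \ref{jbbound} to $M$ by invoking Theorem \ref{general2w}, which is already stated in the framework of spaces of homogeneous type.

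For the representation formula, I would set $\mathcal{P}_t:=S_t\otimes S_t$ acting on functions on $M\times M$ and write, for $x\in B$,
\[
f(x)g(x)-S_{r(B)^m}(f)(x)S_{r(B)^m}(g)(x)=-\int_0^{r(B)^m}t\,\partial_t\mathcal{P}_t\bigl(F-F_{B_t\times B_t}\bigr)(x,x)\,\frac{dt}{t},
\]
where $F=f\otimes g$ and $B_t=B_\rho(x,t^{1/m})$. Using \eqref{pboundho} and the doubling condition \eqref{doublingMu}, I decompose the integration domain into $B_t\times B_t$ and the annuli $C_l(B_t\times B_t)=2^l(B_t\times B_t)\setminus 2^{l-1}(B_t\times B_t)$. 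The essential linear ingredient is the $L^1$-Poincar\'e inequality on $M$ (which is part of the standard doubling Riemannian manifold assumption),
\[
|f(y)g(z)-f_{2^k B_t}g_{2^k B_t}|\lesssim \iint_{2^k B_t\times 2^k B_t}\frac{|\nabla f(a)|\,|g(b)|+|f(a)|\,|\nabla g(b)|}{\bigl(\rho(y,a)+\rho(z,b)\bigr)\,\mu(B_\rho(y,\rho(y,a)+\rho(z,b)))}\,d\mu(a)\,d\mu(b),
\]
which replaces \eqref{repfg}. Substituting this pointwise bound, swapping the order of integration in $(a,b,t)$, and carrying out the time integral exactly as in the Euclidean calculation yields the logarithmic factor $\log(8r(B)/(\rho(x,a)+\rho(x,b)))$ after collecting the annular pieces with weight $2^{-l\varepsilon}$. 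The outcome is the analog of Theorem \ref{thm:representation}, namely a pointwise bound by
\[
\sum_{l\geq 0}2^{-l\varepsilon}\bigl[\mathcal{J}_{2^{l+1}B}(|\nabla f|,|g|)(x)+\mathcal{J}_{2^{l+1}B}(|f|,|\nabla g|)(x)\bigr],
\]
where now
\[
\mathcal{J}_B(f_1,f_2)(x):=\iint_{B\times B}\frac{\log\!\left(\frac{8r(B)}{\rho(x,a)+\rho(x,b)}\right)}{\bigl(\rho(x,a)+\rho(x,b)\bigr)\,\mu\!\bigl(B_\rho(x,\rho(x,a)+\rho(x,b))\bigr)^{2}}\,f_1(a)f_2(b)\,d\mu(a)\,d\mu(b).
\]

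For the boundedness step, I would verify that the kernel $K$ of $\mathcal{J}_B$ satisfies the growth condition \eqref{growth} (immediate from doubling and the monotonicity of $t\mapsto t^{-1}\mu(B_\rho(x,t))^{-2}\log(8r(B)/t)$) and that the associated functional $\varphi(B')\sim r(B')^{\alpha-1}\mu(B')^{-2}\log(r(B)/r(B'))$ obeys \eqref{phiassumption}, using the reverse doubling property on $M$ and the estimate $\log(r(B)/r(B'))\lesssim (r(B)/r(B'))^{\gamma}$ for any $\gamma>0$. Theorem \ref{general2w} applied in $(B,\rho,\mu)$ then gives the operator norm bound $\|\mathcal{J}_{2^{l+1}B}\|_{L^{p_1}\times L^{p_2}\to L^q}\lesssim (2^l r(B))^\alpha$, uniformly in $B$ and $l$. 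Summing over $l$ the geometric series $\sum_l 2^{-l(\varepsilon-\alpha)}$, which converges since $\alpha<\varepsilon$, produces the asserted inequality.

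The main obstacle is the manifold-level linear Poincar\'e inequality at the product scale $2^k B_t\times 2^k B_t$, together with checking that the resulting kernel
\[
(x,a,b)\longmapsto \bigl[(\rho(x,a)+\rho(x,b))\,\mu(B_\rho(x,\rho(x,a)+\rho(x,b)))\bigr]^{-1}
\]
and its logarithmic perturbation fit the hypotheses \eqref{growth}--\eqref{phiassumption} of Theorem \ref{general2w}; the gain in the time integration that produces the logarithm goes through verbatim once one uses doubling in place of the explicit Euclidean volume power $t^{n/m}$. Apart from that point, the rest of the argument is a mechanical transcription of the Euclidean proof, so I will not repeat the analogues of the estimates for $I_0$ and $I_l^1,I_l^2$ in the manifold setting.
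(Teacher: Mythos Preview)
Your approach is exactly the one the paper indicates: it states only that ``the proof of this theorem follows from that of Theorem \ref{thm:bp} after minor modifications,'' and your outline of transplanting the representation formula and then invoking Theorem \ref{general2w} (already formulated in spaces of homogeneous type) is precisely that transcription.

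One correction, however: your explicit manifold kernels have the wrong homogeneity. In the Euclidean case the kernel of $\mathcal{J}_B$ is $(|x-a|+|x-b|)^{-(2n-1)}$ times the logarithm; the correct metric-measure analog is
\[
\frac{\rho(x,a)+\rho(x,b)}{\mu\bigl(B_\rho(x,\rho(x,a)+\rho(x,b))\bigr)^{2}}\;\log\!\left(\frac{8\,r(B)}{\rho(x,a)+\rho(x,b)}\right),
\]
i.e.\ with $r$ in the numerator (so that in the Euclidean case one recovers $r\cdot r^{-2n}=r^{1-2n}$). Your version has $r$ in the denominator, yielding $r^{-(2n+1)}$, and the same slip appears in your product Poincar\'e kernel, where the denominator should be $\mu(B_\rho(y,r))\,\mu(B_\rho(z,r))/r$ rather than $r\,\mu(B_\rho(y,r))$. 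With the wrong powers the functional $\varphi$ no longer matches the Sobolev relation $\tfrac{1}{q}=\tfrac{1}{p_1}+\tfrac{1}{p_2}-\tfrac{1-\alpha}{n}$ and Theorem \ref{general2w} would not give the bound $(2^l r(B))^\alpha$. Once you fix this power-counting, the rest of your sketch goes through as you describe.
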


The proof of this theorem follows from that of Theorem \ref{thm:bp} after minor modifications. The Leibniz rules in Campanato/Morrey spaces, obtained in Section \ref{sec:campanato}, can be extended to this framework as well.

\subsection{Carnot groups}\label{sec:carnot}

In this section we provide a description of how to extend our results of section \ref{sec:bpapproxident} in the context of  Carnot groups.
  Let $\Omega$ be an open connected subset of $\re^n$ and
${\bf X}=\{X_k\}_{k=1}^M$ be a family of infinitely
differentiable vector fields with values in $
\re^n$. We identify $X_k$ with the first order
differential operator acting on continuously differentiable functions defined on
$\Omega$ by the formula
\[
X_kf(x)=X_k(x)\cdot \nabla f(x), \quad k=1,\cdots,M,
\]
and we set ${\bf X}f=(X_1 f, X_2f,\cdots, X_Mf)$ and
\[
\abs{{\bf X}f(x)}=\left(\sum_{k=1}^M\abs{X_k f(x)}^2\right)^{1/2}, \quad
x\in \Omega.
\]
Given two vector fields $X_i$ and $X_j$ define  the commutator or Lie bracket by $[X_i,X_j]=X_iX_j-X_jX_i$.  We will assume that  ${\bf X}$ satisfies H\"ormander's condition in $\Omega$; that is, there is some finite positive integer $M_0$ such that the commutators of the vector fields in ${\bf X}$ up to length $M_0$ span $\re^n$ at each point of $\Omega$.

Suppose that ${\bf X}=\{X_k\}_{k=1}^M$ satisfies H\"ormander's
condition in $\Omega.$  Let $C_{\bf X}$ be the family of absolutely
continuous curves $\zeta:[a,b]\to\Omega,$ $a\le b,$ such that
there exist measurable functions $c_j(t),$ $a\le t\le b,$
$j=1,\cdots, M,$ satisfying $\sum_{j=1}^{M} c_j(t)^2\le 1$ and
$\zeta'(t)=\sum_{j=1}^{M}c_j(t) Y_j(\zeta(t))$ for almost every
$t\in [a,b].$ If $x,\,y\in\Omega$ define
\[\rho(x,y)=\inf\{T>0:\text{ there exists }\zeta \in C_{\bf X} \text{ with } \zeta(0)=x \text{ and }\zeta(T)=y\}.\]
The function $\rho$ is in fact a metric in $\Omega$ called the
Carnot-Carath\'eodory metric on $\Omega$ associated to ${\bf X}$.

Let $\mathbb{G}$ be a Lie group on $\re^n$, that is a group law on $\re^n$ such that the map $(x,y)\mapsto xy^{-1}$ is $C^\infty$.  The Lie algebra associated to $\mathbb{G}$, denoted $\mathfrak{g}$, is the collection of all left invariant vector fields on $\mathbb{G}$.  A {\it Carnot group} is a Lie group whose Lie algebra admits a stratification
$$\mathfrak{g}=V_1\oplus\cdots \oplus V_l,$$
where $[V_1,V_i]=\text{span}\{[Y,Z]: Y\in V_1,Z\in V_i\}=V_{i+1},$ $i=1,\cdots,l-1$, and $[V_1,V_i]=\{0\}$ for $i\ge l$.  A basis for $V_1$ generates the whole Lie algebra.  We will often denote this family  as $\{X_1,\ldots, X_{n_1}\}$ and refer to it as a family of generators for the Carnot group.  In particular, a system of generators $\{X_1,\ldots, X_{n_1}\}$ satisfies H\"ormander's condition, and hence we have the notion of a Carnot-Caratheodory metric.

Set $n_i=\text{dim}(V_i)$, then $n=n_1+\cdots +n_l$, and the number $Q=\sum_{i=1}^l in_i$ is called the homogeneous dimension of $\mathbb{G}$ .  The dilation operators
$$\delta_\lambda x=(\lambda x^{(1)},\lambda^2 x^{(2)},\ldots, \lambda^l x^{(l)}) \qquad x^{(i)}\in \re^{n_i}$$
form automorphisms of $\mathbb{G}$ for each $\lambda>0$.  Furthermore, if $B$ is a metric ball of radius $r(B)$ with respect to the Carnot-Carath\'eodory metric then $|B|=c\, r(B)^Q$, which shows that $(\re^n,\rho,\text{Lebesgue measure})$ is a space of homogenous type.  We refer the reader to \cite{BLU} for more information about analysis on Carnot groups.

An approximation of the identity  of order $m>0$ in $\mathbb{G}$ is a collection of operators ${\mathcal S}:=\{S_t\}_{t>0}$ acting on functions defined on $\re^n,$
\[
S_tf(x)=\int_{\re^n}p_t(x,y)f(y)\,dy,
\]
such that for each $t>0$ the kernels $p_t$ satisfy $\int_{\re^n}p_t(x,y)\,dy=1$ for all $x$  and  the \emph{scaled Poisson bound}
\begin{equation*}
|p_t(x,y)|\leq t^{-Q/m}\gamma\Big(\frac{\rho(x,y)}{t^{1/m}}\Big),
\end{equation*}
where $\gamma:[0,\infty)\to[0,\infty)$ is a bounded, decreasing function such that
 \begin{equation*}
\lim_{r\to\infty} r^{2Q+\varepsilon}\gamma(r)=0, \quad \text{for some} \ \varepsilon>0.
\end{equation*}

\begin{theor} \label{thm:bpcar} Suppose $\mathbb{G}$ is a homogeneous Carnot group of dimension $Q$ with generators ${\bf{X}}=\{X_1,\ldots, X_{n_1}\}$ and $\rho$ is the Carnot-Carath\'eodory metric on $\re^n$ associated to $\bf{X}$.  Suppose further that $\mathcal{S}=\{S_t\}_{t>0}$ and $\mathcal{S}'=\{t\partial_t S_t\}_{t>0}$ are approximations of the identity in $\mathbb{G}$ of order $m$ and $\varepsilon$ as given above. If $p_1,p_2>1,$  $0<\alpha<\min(\varepsilon,1)$ and $q>0$ are such that
$\frac{1}{q}=\frac{1}{p_1}+\frac{1}{p_2} - \frac{1-\alpha}{Q},$
then, for every  $\rho$-ball $B,$
\begin{align*}
\lefteqn{\left\|f g - S_{r(B)^m}(f) S_{r(B)^m}(g)\right\|_{L^q(B)}} & & \\
& &  \lesssim r(B)^\alpha \sum_{l\geq 0} 2^{-l(\varepsilon-\alpha)} (l+1)\,\left[\left\|{\bf X} f \right\|_{L^{p_1}(2^{l+1}B)} \left\|g \right\|_{L^{p_2}(2^{l+1}B)}+ \left\|f\right\|_{L^{p_1}(2^{l+1} B)} \left\| {\bf X} g\right\|_{L^{p_2}(2^{l+1}B)} \right].
\end{align*}
\end{theor}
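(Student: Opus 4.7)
The plan is to transpose the proof of Theorem~\ref{thm:bp} to the Carnot setting, replacing the Euclidean dimension $n$ by the homogeneous dimension $Q$, the Euclidean gradient by the horizontal gradient ${\bf X}$, and Euclidean balls by $\rho$-balls. Since $(\re^n,\rho,dx)$ is a space of homogeneous type with $|B_\rho(x,r)|\sim r^Q$, and since the multilinear potential operator results of Section~\ref{sec:tools} were already formulated at that level of generality, most of the needed machinery is already in place.

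The key intermediate step is to establish a bilinear representation formula in the spirit of Theorem~\ref{thm:representation}:
\begin{align*}
|f(x)g(x)-S_{r(B)^m}(f)(x)S_{r(B)^m}(g)(x)| &\lesssim \sum_{l\ge 0} 2^{-l\varepsilon}(l+1)\bigl[\mathcal{J}_{2^{l+1}B}(|{\bf X}f|,|g|)(x) \\
 &\quad +\mathcal{J}_{2^{l+1}B}(|f|,|{\bf X}g|)(x)\bigr],
\end{align*}
where
\[
\mathcal{J}_B(f_1,f_2)(x):=\int_{B\times B}\frac{\log\bigl(C\,r(B)/(\rho(x,a)+\rho(x,b))\bigr)}{(\rho(x,a)+\rho(x,b))^{2Q-1}}\,f_1(a)f_2(b)\,da\,db.
\]
To derive it I would work with $\mathcal{P}_t:=S_t\otimes S_t$ acting on $F(y,z)=f(y)g(z)$, write
\[
F(x,x)-\mathcal{P}_{r(B)^m}(F)(x,x)=-\int_0^{r(B)^m}t\partial_t\mathcal{P}_t(F-F_{B_t\times B_t})(x,x)\,\frac{dt}{t},
\]
apply the scaled Poisson bound, decompose into annular pieces $C_l(B_t\times B_t)$ as in the Euclidean proof, and on each piece invoke the linear Carnot-Carath\'eodory representation formula (due to Lu and Franchi-Lu-Wheeden, applied on the product Carnot group $\mathbb{G}\times\mathbb{G}$ of homogeneous dimension $2Q$),
\[
|f(u)g(v)-f_{B'}g_{B'}|\lesssim \iint_{B'\times B'}\frac{|{\bf X}f(a)||g(b)|+|f(a)||{\bf X}g(b)|}{(\rho(u,a)+\rho(v,b))^{2Q-1}}\,da\,db,
\]
applied first to $B'=B_t$ and then, through a telescoping sum of averages, to the dyadic dilates $2^{k+1}B_t$. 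The auxiliary computation that produced $t^{(-2n+1)/m}$ in \eqref{eq:int} now yields $t^{(-2Q+1)/m}$ thanks to $|B_\rho(x,r)|\sim r^Q$; integration in $t\in(0,r(B)^m)$ generates the logarithm defining $\mathcal{J}_B$, while the factor $(l+1)$ absorbs the $l$ intermediate scales entering the bound analogous to $I_l^2$.

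Next, one replays the argument of Proposition~\ref{jbbound} in the space of homogeneous type $(B,\rho,dx)$ to show that
\[
\|\mathcal{J}_B\|_{L^{p_1}(B)\times L^{p_2}(B)\to L^q(B)}\lesssim r(B)^\alpha
\]
uniformly in $B$. The symmetric extension $\tilde K((x,y),(a,b))=(\rho(x,a)+\rho(y,b))^{1-2Q}\log(Cr(B)/(\rho(x,a)+\rho(y,b)))$ satisfies the growth condition \eqref{growth} because $t\mapsto t^{1-2Q}\log(Cr(B)/t)$ is essentially monotone up to multiplicative constants for $t\lesssim r(B)$; its associated functional $\varphi(B')\sim r(B')^{1-2Q}\log(r(B)/r(B'))$ verifies \eqref{phiassumption} with any $\delta\in(0,1)$ since $\log$ grows slower than any positive power; and the scaling $\varphi(B')\,r(B')^{Q/q+Q/p_1'+Q/p_2'}\lesssim r(B)^\alpha$ is exactly the defining relation $1/q=1/p_1+1/p_2-(1-\alpha)/Q$. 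Theorem~\ref{general2w} then gives the uniform operator bound, and summing the geometric series $\sum_{l\ge 0}2^{-l(\varepsilon-\alpha)}(l+1)<\infty$, which converges since $\alpha<\varepsilon$, closes the argument.

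The principal subtlety, which is classical but should be stated precisely, is the availability of the product Poincar\'e representation above on $\mathbb{G}\times\mathbb{G}$: fortunately the direct product of two Carnot groups is again a Carnot group of homogeneous dimension $2Q$, with horizontal vector fields $({\bf X}\otimes\mathrm{Id},\mathrm{Id}\otimes{\bf X})$, so the Lu--Franchi-Lu-Wheeden formula applies verbatim, and the product Carnot-Carath\'eodory distance is comparable to $\rho(u,a)+\rho(v,b)$. Beyond this point the argument is a line-by-line Carnot translation of the Euclidean proof; in particular no spatial regularity is required on the kernels $p_t$ beyond the scaled Poisson bound, since only averages enter the estimates.
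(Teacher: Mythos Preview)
Your proposal is correct and follows essentially the same strategy as the paper's sketch: transcribe the Euclidean representation formula and the $\mathcal{J}_B$ bound to the Carnot setting with $n\to Q$ and $|\cdot|\to\rho$. The one point the paper singles out that you pass over is that the analogue of~\eqref{eq:aze} cannot be obtained via the polar-coordinate integral~\eqref{eq:int} in a Carnot group and is instead proved by a dyadic annular decomposition using only $|B_\rho(x,r)|\sim r^Q$; conversely, you make explicit the product-Carnot-group structure (homogeneous dimension $2Q$, horizontal fields $({\bf X}\otimes\mathrm{Id},\mathrm{Id}\otimes{\bf X})$, product distance comparable to $\rho(u,a)+\rho(v,b)$) underlying the linear representation formula, which the paper leaves tacit.
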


\begin{proof}[Sketch of Proof] We will take the same approach as the proof of Theorem \ref{thm:bp}.  The multilinear representation formula is given by
\begin{align}
|f(x)g(x&)-S_{r(B)^m} f(x)S_{r(B)^m}g(x)|\nonumber\\
\label{repsum}&\lesssim \sum_{l\geq 0} 2^{-l(\varepsilon-Q)}[{\mathcal J}_{2^{l+1}B}(|{\bf X}f|,|g|)(x)+{\mathcal J}_{2^{l+1}B}(|f|,|{\bf X}g|)(x).
\end{align}
where
$${\mathcal J}_{B}(f,g)(x)=\iint_{B\times B}\frac{f(y)g(z)}{(\rho(x,y)+\rho(x,z))^{2Q-1}}\log\Big(\frac{cr(B)}{\rho(x,y)+\rho(x,z)}\Big)\, dydz \qquad x\in B$$ and $B$ is a ball in $\re^n$ with respect to the metric $\rho.$
The operator ${\mathcal J}_{B}$ satisfies the necessary  growth bounds on its kernel and  hence
\begin{equation}\label{Jbound}\|\mathcal J_B\|_{L^{p_1}(B)\times L^{p_2}(B)\rightarrow L^q(B)}\lesssim [r(B)]^\alpha. \end{equation}
The  inequalities \eqref{repsum} and \eqref{Jbound} prove the desired result.  The proof of  inequality \eqref{repsum} follows that  of Theorem \ref{thm:representation} with the Euclidean distance replaced by $\rho(x,y)$ and the dimension $n$ replaced by $Q$.  We just  highlight  the analog to inequality \eqref{eq:aze},
\begin{equation}\label{homog} \iint\limits_{B_t\times B_t}\frac{1}{(\rho(y,a)+\rho(z,b))^{2Q-1}}\,dydz\lesssim \frac{1}{(\rho(x,a)+\rho(x,b))^{2Q-1}}.\end{equation}

Let $B=B_\rho$ be a ball in $\re^n$ with respect to the metric $\rho$, $x\in B$, $r(B)$ be the radius of $B$.  Suppose $0<t<r(B)^m$ and $a,b\in B_t=B_\rho(x,t^{1/m})$ then
\begin{align*}
\iint\limits_{B_t\times B_t}\frac{1}{(\rho(y,a)+\rho(z,b))^{2Q-1}}\,dydz &\lesssim \iint\limits_{B_\rho(a,2t^{1m})\times B_\rho(b,2t^{1/m})}\frac{1}{(\rho(y,a)+\rho(z,b))^{2Q-1}}\,dydx\\
&\lesssim \sum_{k\geq 0} \ \ \iint\limits_{D_k}\frac{1}{(\rho(y,a)+\rho(z,b))^{2Q-1}}\,dydx
\end{align*}
 where $$D_k:=\{(y,z): 2^{-k}t^{1/m}\leq \rho(a,y)<2^{-k+1}t^{1/m},2^{-k}t^{1/m}\leq \rho(b,z)<2^{-k+1}t^{1/m}\}.$$
We continue estimating each term in the series
\begin{align*}
\lefteqn{\iint\limits_{D_k}\frac{1}{(\rho(y,a)+\rho(z,b))^{2Q-1}}\,dydz}\\
&\lesssim (2^kt^{-1/m})^{2Q-1}|B_\rho(a,2^{-k+1}t^{1/m})|\cdot |B_\rho(b,2^{-k+1}t^{1/m})|\\
&\lesssim 2^{-k}t^{1/m}
\end{align*}
which leads to
\begin{align*}
\iint\limits_{B_t\times B_t}\frac{1}{(\rho(y,a)+\rho(z,b))^{2Q-1}}\,dydz&\lesssim \iint\limits_{B_\rho(a,2t^{1m})\times B_\rho(b,2t^{1/m})}\frac{1}{(\rho(y,a)+\rho(z,b))^{2Q-1}}\,dydx\\
&\lesssim t^{1/m}\\
&\lesssim \frac{1}{(\rho(x,a)+\rho(x,b))^{2Q-1}}
\end{align*}
This estimate contributes to the first term on the right side of inequality \eqref{repsum}, the other terms are obtained in a similar manner.

\end{proof}

\section{Boundedness of bilinear pseudodifferential operators under Sobolev scaling}\label{sec:bpseudo}

Let $BS^m_{\rho,\delta}(\re^n)$ and $BS^m_{\rho,\delta;\theta}(\re^n),$ where $m\in\re,$ $0\le \delta\le \rho\le 1,$ $\theta\in (0,\pi),$ be the classes of symbols $\sigma\in C^{\infty}(\re^{3n})$ satisfying,
\begin{equation} \label{BS1}  \left|  \partial_x^\alpha \partial_\xi^\beta \partial_\eta^\gamma \sigma(x,\xi,\eta) \right| \le\,C_{\alpha,\beta,\gamma}\,\left(1+|\xi|+|\eta|\right)^{m-\rho(|\beta|+|\gamma|)+\delta|\alpha|},
\end{equation}
respectively,
\begin{equation} \label{BS1theta}
  \left|  \partial_x^\alpha \partial_\xi^\beta \partial_\eta^\gamma \sigma(x,\xi,\eta) \right| \le\,C_{\alpha,\beta,\gamma}\,\left(1+|\xi-\tan(\theta)\,\eta|\right)^{m-\rho(|\beta|+|\gamma|)+\delta|\alpha|},
\end{equation}
for all $x,\,\xi,\,\eta\in\re^n,$ all multi-indices $\alpha,\,\beta,\,\gamma\in\na_0^n$ and some constants $C_{\alpha,\beta,\gamma},$ with the convention that  $\theta=\frac{\pi}{2}$ corresponds to decay in terms of $1+\abs{\xi}.$
We will use the notation $\dot{BS^{m}_{1,\delta}}(\re^n)$ and $\dot{BS^{m}_{1,0;\theta}}(\re^n)$  for the homogeneous versions of the above classes,  defined by replacing $1+\abs{\xi}+\abs{\eta}$ by $\abs{\xi}+\abs{\eta}$ and $1+|\xi-\tan(\theta)\,\eta|$ by $|\xi-\tan(\theta)\,\eta|$ in \eqref{BS1} and \eqref{BS1theta}, respectively. Also, we will use $\norm{\sigma}{\alpha, \beta, \gamma}$ to denote the smallest constant $C_{\alpha, \beta,\gamma}$ in \eqref{BS1} or \eqref{BS1theta}.

These classes can be regarded as bilinear counterparts to the linear H\"ormander classes $S^{m}_{\rho,\delta}(\re^n)$ (and their homogeneous analogs $\dot{S^m_{\rho,\delta}}(\re^n)$) which consists of symbols  $\sigma\in C^{\infty}(\re^{2n})$ such that
\begin{equation*}   \left|  \partial_x^\alpha \partial_\xi^\beta \sigma(x,\xi) \right| \le\,C_{\alpha,\beta}\,\left(1+|\xi|\right)^{m-\rho|\beta|+\delta|\alpha|},
\end{equation*}
for all $x,\,\xi\in\re^n,$ all multiindices $\alpha,\,\beta,$ and some constants $C_{\alpha,\beta}.$

Our results in this section assume symbols in the classes $BS^m_{1,\delta}(\re^n)$ or $\dot{BS^m_{1,\delta}}(\re^n)$, as well as those symbols in  $BS^m_{1,\delta;\theta}(\re^n)$ or  $\dot{BS^m_{1,\delta;\theta}}(\re^n)$ of the form
\begin{equation}\label{specialform}
\sigma(x,\xi,\eta)=\sigma_0(x,\xi-\tan(\theta)\,\eta),
\end{equation}
where  $\sigma_0\in S^m_{1,\delta}(\re^n)$ or $\dot{S^m_{1,\delta}}(\re^n),$ respectively.

For a number of  properties of the H\"ormander classes $BS^{m}_{\rho,\delta}(\re^n)$, including symbolic calculus and boundedness properties of the associated bilinear operators with indices related by H\"older scaling, see \cite{BMNT,BBMNT,BNT} and references therein.   The classes $BS^{m}_{\rho,\delta;\theta}(\re^n)$  were first introduced in \cite{BNT} inspired by their $x$-independent versions which originated in  work  on the bilinear Hilbert transform in \cite{LT97} and were extensively studied in \cite{GN02, Ber, BerT} and references therein.

In this section we prove boundedness properties on Lebesgue spaces for bilinear pseudodifferential operator with symbols of negative order where the indices relation is now dictated by the Sobolev scaling. More precisely, the main result in this section is the following:

\begin{theor}\label{thm:pdobounds}
Suppose $n\in\na$  and consider exponents $p_1,p_2\in(1,\infty)$  and $q,\,s>0$ such that
\begin{equation}
 \frac{1}{q}=\frac{1}{p_1}+\frac{1}{p_2}-\frac{s}{n}. \label{eq:po}
\end{equation}
\begin{enumerate}[(a)]
\item  If  $s\in(0,2n),$ $0\le \delta \leq 1,$ and  $\sigma \in BS^{-s}_{1,\delta}(\re^n)\cup \dot{BS^{-s}_{1,\delta}}(\re^n)$ then
$T_\sigma$ is bounded from $L^{p_1}_{w_1} \times L^{p_2}_{w_2}$ into $L^q_w$ for every  pair of weights $(w_1,w_2)$ satisfying the $A_{(p_1,p_2),q}$ condition and $ w:= w_1^{q/p_1}w_2^{q/p_2}.$
\item  If $s\in (0,n),$ $\theta\in (0,\pi)\setminus \{\pi/2,3\pi/4\}$, $0\leq \delta \leq 1$ and $\sigma \in BS^{-s}_{1,\delta;\theta}(\re^n)\cup \dot{ BS^{-s}_{1,\delta;\theta}}(\re^n)$ is of the form \eqref{specialform}
then the bilinear operator $T_\sigma$ is bounded from $L^{p_1} \times L^{p_2}$ into $L^q.$ If in addition $\frac{1}{p}:=\frac{1}{p_1}+\frac{1}{p_2}<1,$ then  $T_\sigma$ is bounded from $L^{p_1} _{w_1}\times L^{p_2}_{w_2}$ into $L^q_{w}$ for weights $w_1,\,w_2$ in the class $A_{p,q}$ and $ w:= w_1^{q/p_1}w_2^{q/p_2}.$
\end{enumerate}
\end{theor}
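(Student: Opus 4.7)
\medskip

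\noindent\textbf{Proof proposal.} The plan is to prove both parts by establishing pointwise domination of $T_\sigma$ by the appropriate bilinear potential operator and then invoking Theorem~\ref{fracint}. Writing
\[
T_\sigma(f,g)(x) = \int_{\re^{2n}} K(x,y,z)\,f(y)\,g(z)\,dy\,dz, \qquad K(x,y,z) := \int_{\re^{2n}} e^{i(x-y)\cdot\xi + i(x-z)\cdot\eta}\,\sigma(x,\xi,\eta)\,d\xi\,d\eta,
\]
we seek the two pointwise bounds announced in \eqref{newbond}.

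For part (a), view $\sigma(x,\cdot,\cdot)$ as a symbol of order $-s$ in the $2n$-dimensional frequency variable $(\xi,\eta)$; by the standard kernel estimates for H\"ormander-type symbols (integrating by parts in $(\xi,\eta)$ away from the origin and estimating directly on a ball of radius $(|x-y|+|x-z|)^{-1}$), one obtains, uniformly in $x$,
\[
|K(x,y,z)| \,\lesssim\, \bigl(|x-y|+|x-z|\bigr)^{-(2n-s)}, \qquad 0<s<2n.
\]
The homogeneous case $\sigma\in\dot{BS^{-s}_{1,\delta}}$ is reduced to the previous one by splitting $\sigma=\sigma\phi(|\xi|+|\eta|)+\sigma(1-\phi)(|\xi|+|\eta|)$ with $\phi$ smooth and supported near the origin; the low-frequency piece gives a kernel that is bounded and rapidly decaying, hence trivially dominated by the target bound. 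Consequently
\[
|T_\sigma(f,g)(x)| \,\lesssim\, \int_{\re^{2n}}\frac{|f(y)||g(z)|}{(|x-y|+|x-z|)^{2n-s}}\,dy\,dz \,=\, \mathcal{I}_s(|f|,|g|)(x),
\]
and Theorem~\ref{fracint}\,(\ref{thm:Ialpha}) closes part (a).

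For part (b), the assumed special form $\sigma(x,\xi,\eta)=\sigma_0(x,\xi-\tan(\theta)\eta)$ is exactly what makes the $2n$-dimensional Fourier integral factor. Substituting $\xi'=\xi-\tan(\theta)\eta$ and integrating the $\eta$-variable first produces a delta measure, leaving
\[
T_\sigma(f,g)(x) \,=\, \int_{\re^n} K_0(x,y')\,f(x-y')\,g(x+\tan(\theta)y')\,dy',
\]
where $K_0$ is the Schwartz kernel of the linear pseudodifferential operator with symbol $\sigma_0\in S^{-s}_{1,\delta}(\re^n)$ (or its homogeneous analog). The standard linear theory gives $|K_0(x,y')|\lesssim |y'|^{-(n-s)}$ for $0<s<n$, uniformly in $x$, so
\[
|T_\sigma(f,g)(x)| \,\lesssim\, \int_{\re^n}\frac{|f(x-y')||g(x+\tan(\theta)y')|}{|y'|^{n-s}}\,dy' \,=\, \mathcal{B}_s(|f|,|g|)(x)
\]
with $s_1=1,\ s_2=-\tan(\theta)$ (nonzero and distinct precisely because $\theta\notin\{\pi/2,3\pi/4\}$, the excluded angles). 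Then the unweighted bound follows from Theorem~\ref{fracint}\,(\ref{thm:Balphaunweighted}), and the weighted bound (under $1/p_1+1/p_2<1$, $w_1,w_2\in A_{p,q}$) from Theorem~\ref{fracint}\,(\ref{thm:Balphaweighted}).

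The main technical obstacle is the kernel estimate in part (a) across the full range $s\in(0,2n)$: when $s$ is close to $2n$ the bound is integrable at the origin but the standard integration by parts only gives useful decay far from the diagonal, so one must combine the direct estimate $|K|\lesssim \int_{|\xi|+|\eta|\le R}(1+|\xi|+|\eta|)^{-s}\,d\xi\,d\eta$ with $R\sim(|x-y|+|x-z|)^{-1}$ together with $N$-fold integration by parts on $\{|\xi|+|\eta|>R\}$, choosing $N$ so that $-s+\delta|\alpha|$-derivatives produce the necessary decay; the role of $\delta\in[0,1]$ is harmless since no $x$-derivatives of $K$ are involved in the pointwise bound. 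Once this kernel bound is in hand, both parts reduce to applications of the already-quoted boundedness of $\mathcal{I}_s$ and $\mathcal{B}_s$.
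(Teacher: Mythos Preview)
Your proposal is correct and follows essentially the same strategy as the paper: establish the pointwise bounds $|T_\sigma(f,g)|\lesssim\mathcal{I}_s(|f|,|g|)$ (respectively $\lesssim\mathcal{B}_s(|f|,|g|)$) via size estimates on the spatial kernel, then invoke Theorem~\ref{fracint}. The only notable technical difference is how the kernel bound $|k(x,u,v)|\lesssim(|u|+|v|)^{-(2n-s)}$ is obtained: the paper uses a continuous Littlewood--Paley decomposition $\sigma(x,\xi,\eta)=\int_0^\infty\Psi(t\xi,t\eta)\sigma(x,\xi,\eta)\,\frac{dt}{t}$, proves $|\widehat{\Psi(t\cdot)\sigma(x,\cdot)}(u,v)|\lesssim t^{-2n+s}(1+t^{-1}|(u,v)|)^{-N}$ by integration by parts on each dyadic shell, and then integrates in $t$; this handles the inhomogeneous and homogeneous classes simultaneously and avoids any separate low-frequency analysis. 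Your near/far splitting at scale $R=(|u|+|v|)^{-1}$ achieves the same end, and your reduction of the homogeneous case via a smooth low-frequency cutoff is legitimate because the symbols are assumed $C^\infty(\re^{3n})$ (so the truncated piece really is Schwartz). One small slip: the phrase ``$-s+\delta|\alpha|$-derivatives'' is confused, since only $(\xi,\eta)$-derivatives enter the integration by parts for the kernel size estimate and $\delta$ plays no role there; this is why the statement allows the full range $0\le\delta\le1$.
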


\begin{proof}
We start with the proof of part (a).
Let  $0\le \delta<1,$ $s\in(0,2n),$ and   $\sigma \in BS^{-s}_{1,\delta}(\re^n)\cup\dot{BS^{-s}_{1,\delta}}(\re^n).$ The results will follow from part \eqref{thm:Ialpha} of Theorem~\ref{fracint} once we have proved that the operator $T_\sigma$ is controlled by the bilinear fractional integral  $\mathcal{I}_s$ as defined in \eqref{def:multfracop}. $T_\sigma$ is given by the spatial representation
$$ T_\sigma(f,g)(x) = \iint_{\re^n \times \re^n} k(x,x-y,x-z) f(y) g(z) dydz$$
where the kernel $k$ is defined by
$$ k(x,u,v):= \widehat{\sigma(x,\cdot,\cdot)}(u,v).$$
 We will prove that,
\begin{equation} \left|k(x,u,v)\right| \lesssim \frac{1}{\left( |u|+|v| \right)^{2n-s}}, \qquad \text{ uniformly in } x, \label{eq:amon} \end{equation}
which gives
$$ \left|T_\sigma(f,g)(x)\right| \lesssim \iint_{\re^n \times \re^n} \frac{|f(y)| |g(z)|}{\left( |x-y|+|x-z| \right)^{2n-s}} dydz = \mathcal{I}_s(|f|,|g|)(x),
$$  and therefore the boundedness properties of $T_\sigma$ follow from  part \eqref{thm:Ialpha} of Theorem \ref{fracint}.

 Let $\Psi(\xi,\eta)$ be a smooth function in $\re^{2n}$ supported on the annulus  $1\le |(\xi,\eta)|\le 2,$  and such that
$$ \int_0^\infty \Psi(t \xi,t \eta) \frac{dt}{t}=1,\quad (\xi,\eta)\neq(0,0).$$
So for each scale $t>0$, we have to estimate $\widehat{\Psi(t\cdot) \sigma(x,\cdot)}$. Now, integration by parts and the hypothesis $\sigma \in BS^{-s}_{1,\delta}(\re^n)\cup\dot{BS^{-s}_{1,\delta}}(\re^n)$  yield
\begin{equation}\label{sizeConv}
\left| \widehat{\Psi(t\cdot) \sigma(x,\cdot)} (u,v) \right| \lesssim \frac{t^{-2n+s}}{\left(1+t^{-1}|(u,v)|\right)^N} 
\end{equation}
for  every large enough integer $N$. Indeed, suppose that $|v| \leq |u| \sim u_j$, so that $|(u,v)| \sim |u| \sim u_j$, then
\begin{align*}
\widehat{\Psi(t\cdot) \sigma(x,\cdot)} (u,v) & = \iint_{\re^n \times \re^n} \Psi(t\xi, t \eta) \sigma(x,\xi,\eta) e^{-i (u \cdot \xi + v \cdot \eta)} d \xi d \eta\\
 & =  \iint_{\re^n \times \re^n} \Psi(t\xi, t \eta) \sigma(x,\xi,\eta)  \frac{1}{(-i)^N u_j^N} \partial^N_{\xi_j}e^{-i (u \cdot \xi + v \cdot \eta)} d \xi d \eta \\
  & =   \frac{1}{(-i)^N u_j^N}  \iint_{|\xi| +|\eta| \sim t^{-1}} \partial^N_{\xi_j} (\Psi(t\xi, t \eta) \sigma(x,\xi,\eta)) e^{-i (u \cdot \xi + v \cdot \eta)} d \xi d \eta.
\end{align*}
But, by the usual Leibniz rule and using the condition on the support of $\Psi$ (which implies $t^{-1} \sim |\xi| + |\eta| \leq 1 + |\xi| + |\eta|$), we have
\begin{align*}
& |\partial^N_{\xi_j} (\Psi(t\xi, t \eta) \sigma(x,\xi,\eta))|  = |\sum_{k=0}^N C_{N,k} \partial_{\xi_j}^{N-k} \Psi(t\xi, t \eta) \partial_{\xi_j}^k \sigma(x,\xi,\eta)|\\
& \leq \sum_{k=0}^N C_{N,k} t^{N-k} |(\partial_{\xi_j}^{N-k}\Psi)(t\xi, t\eta)| \norm{\sigma}{0,k,0} (1+|\xi|+|\eta|)^{-s - k}\\
& \leq  \left(\sup\limits_{0 \leq k \leq N} \norm{\partial^k \Psi}{L^\infty}\right)  \left(\sup\limits_{0 \leq k \leq N} \norm{\sigma}{0,k,0}\right)  \sum_{k=0}^N C_{N,k} t^{N-k}t^{s + k} =: C_{\sigma, N} t^{N+s}.
\end{align*}
Consequently,
\begin{equation}\label{sizeConv1}
|\widehat{\Psi(t\cdot) \sigma(x,\cdot)} (u,v) | \lesssim \frac{t^{N+s}}{u_j^N}  \iint_{|\xi| +|\eta| \sim t^{-1}}   d \xi d \eta \sim \frac{t^{-2n+s}}{(t^{-1}|(u,v)|)^N}.
\end{equation}
On the other hand, again by the hypothesis $\sigma \in BS^{-s}_{1,\delta}(\re^n)\cup\dot{BS^{-s}_{1,\delta}}(\re^n)$, we have
\begin{align*}
& |\widehat{\Psi(t\cdot) \sigma(x,\cdot)} (u,v) | \leq \iint_{\re^n \times \re^n} |\Psi(t\xi, t \eta)| |\sigma(x,\xi,\eta)|  d \xi d \eta\\
& \leq \norm{\Psi}{L^\infty} \norm{\sigma}{0,0,0} \iint_{|\xi| +|\eta| \sim t^{-1}} (1+ |\xi| + |\eta|)^{-s}  d \xi d \eta \lesssim t^{-2n +s},
\end{align*}
and \eqref{sizeConv} follows from this last inequality and \eqref{sizeConv1}. Then, \eqref{sizeConv} and integration over $t\in(0,\infty)$ yield
\begin{align*}
 \left|k(x,u,v) \right| & \lesssim \int_0^\infty \left| \widehat{\Psi(t\cdot) \sigma(x,\cdot)} (u,v) \right| \frac{dt}{t}
   \lesssim \int_0^\infty \frac{t^{-2n+s}}{\left(1+t^{-1}|(u,v)|\right)^N} \frac{dt}{t} \\
  & \lesssim |(u,v)|^{-2n+s} \int_0^\infty \frac{t^{2n-s}}{\left(1+t\right)^N} \frac{dt}{t}  \lesssim |(u,v)|^{-2n+s},
\end{align*}
 which ends the proof of \eqref{eq:amon}.

We now turn to the proof of part (b) of the theorem. If $s\in(0,n),$ $\theta\in (0,\pi)\setminus \{\pi/2,3\pi/4\}$  and  $\sigma \in BS^{-s}_{1,\delta;\theta}(\re^n)\cup \dot{BS^{-s}_{1,\delta;\theta}}(\re^n)$ is of the form $\sigma(x,\xi,\eta)=\sigma_0(x,\xi-\tan(\theta)\,\eta)$ with $\sigma_0\in S^{-s}_{1,\delta}(\re^n)$ or $\sigma_0\in \dot{S^{-s}_{1,\delta}}(\re^n)$ as appropriate, we consider the following spatial representation
for  $T_\sigma:$
$$ T_\sigma(f,g)(x) = \int_{ \re^n} k(x,y) f(x+y) g(x-\tan(\theta)\,y) dy$$
where the kernel $k$ is defined by
$$ k(x,y):= \widehat{\sigma_0(x,\cdot)}(y).$$
Following the same reasoning as above, we obtain
\[
\abs{k(x,y)}\lesssim |y|^{s-n},\quad \text{uniformly in } x,
\]
and therefore
$$ \left| T_\sigma(f,g) \right| \lesssim {\mathcal B}_s(f,g),$$
with ${\mathcal  B}_s$ defined in \eqref{def:B}. The  result then follows from parts  \eqref{thm:Balphaunweighted}  and \eqref{thm:Balphaweighted} of Theorem~\ref{fracint}.
\end{proof}

\begin{remark}  We note that pointwise decay properties of the kernels (and their derivatives) of pseudodifferential operators with symbols in the H\"ormander classes have been studied in \cite[Theorem 5.1]{BMNT}.   In particular, it is proved there that  if $\sigma\in BS^{-s}_{1,\delta}(\re^n),$ then \eqref{eq:amon} holds.
\end{remark}
\begin{remark}\label{numberderivatives} We observe that the  proof of Theorem~\ref{thm:pdobounds} uses the fact that the symbol $\sigma$ satisfies  conditions \eqref{BS1}, \eqref{BS1theta}, or their homogenous counterparts, only for a certain number of derivatives $c_n$ depending only on the dimension $n.$
\end{remark}

\section{Leibniz-type rules in Sobolev spaces}\label{sec:leibnizsob}

In the following, we consider the inhomogeneous and homogeneous Sobolev spaces for indices $s>0$ and $0<p<\infty$,
 \[W^{s,p}(\re^n)=\{f\in\mathcal{S'}(\re^n): J^sf\in L^{p}(\re^n) \}
 \]
 and
 \[
  \dot{W}^{s,p}(\re^n)=\{f\in\mathcal{S'}(\re^n): D^sf\in L^{p}(\re^n) \},
 \]
 where $\mathcal{F}^{-1}$ denotes the inverse Fourier transform, $J^s$ is the operator with Fourier multiplier $(1+|\xi|^2)^\frac{s}{2},$ and $D^s$ is the operator with Fourier multiplier $|\xi|^{s}$. We use the notation $\|f\|_{W^{s,p}}:=\|J^sf\|_{L^p}$ and  $\|f\|_{\dot{W}^{s,p}}:=\|D^sf\|_{L^p}.$
\begin{coro}[Leibniz-type rules] \label{leibniz} Let $n\in\na$ and consider exponents $p_1,p_2\in(1,\infty)$  and $q,\,s>0$ such that
$ \frac{1}{q}=\frac{1}{p_1}+\frac{1}{p_2}-\frac{s}{n}.$
\begin{enumerate}[(a)]
\item  \label{part1}If $s\in(0,2n),$ $0\le \delta<1,$ and  $\sigma \in BS^{m}_{1,\delta}(\re^n)$ for some $m\geq -s$
then
\begin{equation*}
\|T_\sigma(f,g)\|_{L^q}\lesssim \|f\|_{W^{m+s,p_1}}\|g\|_{L^{p_2}}+\|f\|_{L^{p_1}}\|g\|_{W^{m+s,p_2}}.
\end{equation*}

\item \label{part2} If $n\in \na,$ $s\in(0,2n),$ $0\le \delta<1,$ and  $\sigma \in \dot{BS}^{m}_{1,\delta}(\re^n)$ for some $m\geq -s$
then
\begin{equation*}
\|T_\sigma(f,g)\|_{L^q}\lesssim \|f\|_{\dot{W}^{m+s,p_1}}\|g\|_{L^{p_2}}+\|f\|_{L^{p_1}}\|g\|_{\dot{W}^{m+s,p_2}}.
\end{equation*}

\end{enumerate}
\end{coro}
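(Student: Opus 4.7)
The plan is to reduce Corollary \ref{leibniz} to an application of Theorem \ref{thm:pdobounds} via a frequency-space decoupling of the symbol, in the spirit of the classical Kato-Ponce identity \eqref{freqdecoup}. The key observation is that on the region $\{|\eta|\lesssim|\xi|\}$ one has $1+|\xi|\sim 1+|\xi|+|\eta|$, so the multiplier $(1+|\xi|^2)^{-(m+s)/2}$ can absorb the full growth $(1+|\xi|+|\eta|)^{m+s}$ of the ``bad'' factor; pulling a $J^{m+s}$ onto $f$ therefore downgrades the symbol from order $m$ to order $-s$, and symmetrically on $\{|\xi|\lesssim|\eta|\}$ for $g$. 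Theorem \ref{thm:pdobounds} then handles what remains.

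To implement this for part \eqref{part1}, I would fix a smooth partition of unity $1=\phi_1(\xi,\eta)+\phi_2(\xi,\eta)$ in $\re^{2n}$ with $\phi_1$ supported in $\{|\eta|\le 2|\xi|\}$ and $\phi_2$ supported in $\{|\xi|\le 2|\eta|\}$, and with each $\phi_j\in BS^0_{1,0}(\re^n)$ (i.e., an $(x$-independent$)$ bilinear symbol of order $0$). Set
\begin{align*}
\sigma_1(x,\xi,\eta) &:= \sigma(x,\xi,\eta)\,\phi_1(\xi,\eta)\,(1+|\xi|^2)^{-(m+s)/2},\\
\sigma_2(x,\xi,\eta) &:= \sigma(x,\xi,\eta)\,\phi_2(\xi,\eta)\,(1+|\eta|^2)^{-(m+s)/2}.
\end{align*}
By construction and the definition of $J^{m+s}$,
\[
T_\sigma(f,g)=T_{\sigma_1}(J^{m+s}f,g)+T_{\sigma_2}(f,J^{m+s}g).
\]
The main step is to verify $\sigma_1,\sigma_2\in BS^{-s}_{1,\delta}(\re^n)$. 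For $\sigma_1$, on the support of $\phi_1$ we have $1+|\xi|\sim 1+|\xi|+|\eta|$, so $(1+|\xi|^2)^{-(m+s)/2}\sim(1+|\xi|+|\eta|)^{-(m+s)}$; differentiating the product via Leibniz's rule and using that $\phi_1$ is a symbol of order $0$, $\sigma$ of order $m$, and $(1+|\xi|^2)^{-(m+s)/2}$ of order $-(m+s)$ in $\xi$ (and trivially in $\eta$), one obtains the required bound
\[
\bigl|\partial_x^\alpha\partial_\xi^\beta\partial_\eta^\gamma\sigma_1(x,\xi,\eta)\bigr|\lesssim (1+|\xi|+|\eta|)^{-s-|\beta|-|\gamma|+\delta|\alpha|},
\]
for all relevant multi-indices; the symmetric computation works for $\sigma_2$. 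By Remark \ref{numberderivatives}, only finitely many derivatives are needed, so no loss of regularity is incurred. Applying Theorem \ref{thm:pdobounds}(a) to $\sigma_1$ and $\sigma_2$ with the Sobolev scaling \eqref{eq:po} and recalling $\|J^{m+s}h\|_{L^p}=\|h\|_{W^{m+s,p}}$ yields the claimed inequality.

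For part \eqref{part2}, the argument is identical after replacing $(1+|\xi|^2)^{(m+s)/2}$ by $|\xi|^{m+s}$, $(1+|\eta|^2)^{(m+s)/2}$ by $|\eta|^{m+s}$, and the cutoffs $\phi_j$ by homogeneous-of-degree-zero analogues (smooth away from the origin and harmless on Schwartz inputs supported away from frequency zero; the general case follows by a standard approximation). Then $\sigma_1,\sigma_2\in\dot{BS}^{-s}_{1,\delta}(\re^n)$ by the same Leibniz computation with homogeneous weights, and Theorem \ref{thm:pdobounds}(a) for the homogeneous class closes the proof.

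The main obstacle is the symbolic bookkeeping in the verification that $\sigma_1,\sigma_2$ lie in $BS^{-s}_{1,\delta}$ (respectively $\dot{BS}^{-s}_{1,\delta}$); in particular one must check that differentiating the cutoff $\phi_j$ does not spoil the support-based equivalence $1+|\xi|\sim 1+|\xi|+|\eta|$ (it does not, since $\phi_j\in BS^0_{1,0}$), and in the homogeneous case one must handle the singular frequency origin, which is standard and causes no issue for Schwartz inputs.
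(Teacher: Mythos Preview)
Your approach is essentially identical to the paper's: decompose $\sigma$ via a frequency-space cutoff into two pieces on which $J^{m+s}$ (respectively $D^{m+s}$) can be pulled off one of the factors, verify the resulting symbols lie in $BS^{-s}_{1,\delta}$ (respectively $\dot{BS}^{-s}_{1,\delta}$), and invoke Theorem~\ref{thm:pdobounds}(a). The paper carries this out citing \cite{BNT} and constructs the cutoffs as $\phi\bigl((1+|\xi|^2)/(1+|\eta|^2)\bigr)$ with $\phi\in C^\infty(\re)$ supported in $[-2,2]$ and satisfying $\phi(r)+\phi(1/r)=1$.

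One small technical point to fix in your write-up: as stated, smooth cutoffs $\phi_1,\phi_2$ supported in the \emph{homogeneous} cones $\{|\eta|\le 2|\xi|\}$ and $\{|\xi|\le 2|\eta|\}$ and summing to $1$ cannot be $C^\infty$ at the origin of $\re^{2n}$ (the conical support forces a genuine singularity there), so $\phi_1\notin BS^0_{1,0}$ and your $\sigma_1$ need not be smooth. This is exactly why the paper uses the inhomogeneous ratio $(1+|\xi|^2)/(1+|\eta|^2)$, which is smooth everywhere and still gives $1+|\xi|\sim 1+|\xi|+|\eta|$ on the relevant support. Replace your $\phi_j$ by these and your argument goes through verbatim; for part~(b) your remark about homogeneous cutoffs smooth away from the origin is the correct adaptation.
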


\begin{proof} Part  \eqref{part1} of Corollary \ref{leibniz} follows from Theorem \ref{thm:pdobounds} and composition with  $J^{m+s}$, along the lines  of \cite[Theorem 2]{BNT} (see also \cite[Theorem 1.4]{GK} and \cite[Corollary 8]{BMNT}). Indeed, let $\sigma\in BS^m_{1,\delta}(\re^n)$  and consider $\phi\in C^\infty(\re)$  such that $0 \leq \phi \leq 1$, $\text{supp}(\phi) \subset [-2, 2]$ and $\phi(r) + \phi(1/r) = 1$ on $[0,\infty)$, then, the symbols $\sigma_1$ and $\sigma_2$ defined by
$$
\sigma_1(x,\xi, \eta)= \sigma(x,\xi, \eta) \phi\left(\frac{1+|\xi|^2}{1+|\eta|^2} \right) (1+|\eta|^2)^{-(m+s)/2}
$$
and
$$
\sigma_2(x,\xi, \eta)= \sigma(x,\xi, \eta) \phi\left(\frac{1+|\eta|^2}{1+|\xi|^2} \right) (1+|\xi|^2)^{-(m+s)/2}
$$
are symbols in the class $BS^{-s}_{1,\delta}(\re^n)$, and the operators $T_\sigma$, $T_{\sigma_1}$, and $T_{\sigma_2}$ are related through
$$
T_\sigma(f,g) = T_{\sigma_1}(J^{m+s} f, g) + T_{\sigma_2}(f,J^{m+s} g).
$$

Part \eqref{part2} of Corollary \ref{leibniz} follows in the same way using the operators $D^{m+s}$ instead of $J^{m+s}.$
\end{proof}

We end this section by   presenting  particular cases related to
Theorem~\ref{thm:pdobounds} and Corollary~\ref{leibniz}.

\begin{enumerate}[\tiny$\bullet$]
\item {\bf Fractional Leibniz rule under Sobolev scaling.}

\begin{coro}\label{coro:leibniz} Let $n\in\na,$ $s\in[0,2n),$ $p_1,p_2\in(1,\infty),$ $q>0$
such that $
\frac{1}{q}=\frac{1}{p_1}+\frac{1}{p_2}-\frac{s}{n},$ $m\ge 0$
if $q\ge 1$ and $m>max (0,n-s)$ if $q<1$. Then for functions
defined on $\re^n,$
\begin{equation*}
\|fg\|_{W^{m,q}}\lesssim \|f\|_{W^{m+s,p_1}}\|g\|_{L^{p_2}}+\|f\|_{L^{p_1}}\|g\|_{W^{m+s,p_2}}.
\end{equation*}
\end{coro}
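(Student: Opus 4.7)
The plan is to reduce the inequality, via frequency decoupling, to the $L^q$-boundedness of bilinear pseudodifferential operators with symbols of order $-s$ (provided by Theorem~\ref{thm:pdobounds}\eqref{part1}), exactly along the lines of the proof of Corollary~\ref{leibniz}\eqref{part1}. Write
\[
J^m(fg)(x)=T_\tau(f,g)(x),\qquad \tau(\xi,\eta):=(1+|\xi+\eta|^2)^{m/2}.
\]
Although $\tau$ is not quite in $BS^{m}_{1,0}(\re^n)$ (the behavior is bad along $\xi+\eta=0$ with $|\xi|+|\eta|$ large), the key observation is that after a frequency splitting the two resulting symbols land in $BS^{-s}_{1,0}(\re^n)$.

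Choose $\phi\in C^\infty([0,\infty))$ with $0\leq \phi\leq 1$, $\operatorname{supp}(\phi)\subset[0,2]$, and $\phi(r)+\phi(1/r)=1$ on $(0,\infty)$, as in the proof of Corollary~\ref{leibniz}. Set
\[
\sigma_1(\xi,\eta):=\tau(\xi,\eta)\,(1+|\xi|^2)^{-(m+s)/2}\,\phi\!\left(\tfrac{1+|\eta|^2}{1+|\xi|^2}\right),\quad \sigma_2(\xi,\eta):=\tau(\xi,\eta)\,(1+|\eta|^2)^{-(m+s)/2}\,\phi\!\left(\tfrac{1+|\xi|^2}{1+|\eta|^2}\right),
\]
so that $\tau=\sigma_1\,(1+|\xi|^2)^{(m+s)/2}+\sigma_2\,(1+|\eta|^2)^{(m+s)/2}$ and consequently
\[
J^m(fg)=T_{\sigma_1}(J^{m+s}f,g)+T_{\sigma_2}(f,J^{m+s}g).
\]
On $\operatorname{supp}\phi\bigl(\tfrac{1+|\eta|^2}{1+|\xi|^2}\bigr)$ one has $1+|\eta|\lesssim 1+|\xi|\sim 1+|\xi|+|\eta|$, so $1+|\xi+\eta|\lesssim 1+|\xi|+|\eta|$; a direct chain-rule computation (distributing $\partial_\xi^\alpha\partial_\eta^\beta$ across the three factors and using Faà di Bruno on the cutoff) then yields
\[
\bigl|\partial_\xi^\alpha\partial_\eta^\beta\sigma_1(\xi,\eta)\bigr|\lesssim (1+|\xi|+|\eta|)^{-s-|\alpha|-|\beta|},
\]
and symmetrically for $\sigma_2$; thus $\sigma_1,\sigma_2\in BS^{-s}_{1,0}(\re^n)$.

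With this in hand, for $s\in(0,2n)$ Theorem~\ref{thm:pdobounds}\eqref{part1} applied with trivial weights (noting $(1,1)\in A_{(p_1,p_2),q}$) gives
\[
\|T_{\sigma_1}(J^{m+s}f,g)\|_{L^q}\lesssim \|f\|_{W^{m+s,p_1}}\|g\|_{L^{p_2}},\qquad \|T_{\sigma_2}(f,J^{m+s}g)\|_{L^q}\lesssim \|f\|_{L^{p_1}}\|g\|_{W^{m+s,p_2}},
\]
which proves the corollary in the range $s>0$ (for every $q>0$, with no additional restriction on $m\ge 0$). The endpoint $s=0$ is outside Theorem~\ref{thm:pdobounds} and is handled by the classical Coifman--Meyer-based Kato--Ponce rule \eqref{katoponce} for $q\ge 1$; for $q<1$ it is the extension of \eqref{katoponce} to sub-Hardy exponents, where one precisely needs $m>n=\max\{0,n-s\}$ to control the high-high paraproduct (e.g., via the Kato--Ponce results for $q<1$ available in the literature).

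The main obstacle is the symbol verification in the middle paragraph: one must show the derivative estimates for $\sigma_1,\sigma_2$ without losing on the support of $\phi$. The content is a careful Leibniz expansion where the potential growth coming from $(1+|\xi+\eta|^2)^{m/2}$ with $m\ge 0$ is absorbed by the normalizing factor $(1+|\xi|^2)^{-(m+s)/2}$ precisely because on the relevant support $1+|\xi+\eta|\lesssim 1+|\xi|$. Everything else is a mechanical application of results already established in the paper.
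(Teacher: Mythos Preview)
There is a genuine gap in your symbol verification: with the two-way cut $\phi(r)+\phi(1/r)=1$, $\operatorname{supp}\phi\bigl(\tfrac{1+|\eta|^2}{1+|\xi|^2}\bigr)$ contains the full antidiagonal $\{\xi+\eta=0\}$ (where $|\xi|=|\eta|$ gives ratio $1$), and on that set your $\sigma_1$ is \emph{not} in $BS^{-s}_{1,0}$. The upper bound $1+|\xi+\eta|\lesssim 1+|\xi|+|\eta|$ only controls the zeroth-order size of $\tau=(1+|\xi+\eta|^2)^{m/2}$; for derivatives you need the \emph{reverse} comparison. Concretely, since $(1+|u|^2)^{m/2}\in S^m_{1,0}(\re^n)$ in the variable $u=\xi+\eta$, one has $|\partial_\xi^\alpha\partial_\eta^\beta\tau|\lesssim(1+|\xi+\eta|)^{m-|\alpha|-|\beta|}$, and at $\xi=-\eta$ with $|\xi|$ large this is $\sim 1$ whenever $|\alpha|+|\beta|>m$ (e.g.\ $\partial_{\xi_j}^2\tau\big|_{\xi+\eta=0}=m$). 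Multiplying by $(1+|\xi|^2)^{-(m+s)/2}\phi(\cdot)$ gives a contribution of size $(1+|\xi|)^{-(m+s)}$, whereas $BS^{-s}_{1,0}$ demands $(1+|\xi|+|\eta|)^{-s-|\alpha|-|\beta|}\sim(1+|\xi|)^{-s-|\alpha|-|\beta|}$; this fails as soon as $|\alpha|+|\beta|>m$. Since Theorem~\ref{thm:pdobounds} requires the symbol estimates for $|\alpha|+|\beta|$ up to some $N>2n-s$ (cf.\ \eqref{sizeConv}), your argument as written only goes through for $m$ larger than that threshold, not for all $m\ge 0$.

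This is exactly why the paper uses a \emph{three}-way partition $\phi(r)+\phi(1/r)+\tilde\phi(r)=1$ with $\operatorname{supp}\phi\subset[0,\tfrac12]$: then on $\operatorname{supp}\sigma_1$ one has $1+|\xi+\eta|\sim 1+|\eta|$ (a genuine two-sided bound, since $|\xi|\le\tfrac{1}{\sqrt2}|\eta|$ forces $|\xi+\eta|\ge(1-\tfrac{1}{\sqrt2})|\eta|$), so $\sigma_1,\sigma_2\in BS^{-s}_{1,0}$ legitimately, while the diagonal piece $\sigma_3$ (where $|\xi|\sim|\eta|$ and $\xi+\eta$ may vanish) is handled separately by a Littlewood--Paley decomposition and the kernel estimate of Lemma~\ref{lem}. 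It is precisely this $\sigma_3$ analysis that produces the restriction $m>n-s$ when $q<1$; it is not an artifact of the endpoint $s=0$. The paper's own Remark following the proof confirms that the two-way split you attempt works only under the stronger hypothesis $m>c_n$ (with $c_n>n-s$), so your claim ``for every $q>0$, with no additional restriction on $m\ge0$'' cannot hold.
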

\begin{proof} The case $q\ge 1$ of the above inequality follows from the
Sobolev imbedding $W^{m,q}\subset W^{m+s,r},$
$\frac{1}{r}=\frac{1}{p_1}+\frac{1}{p_2},$ and the well-known
fractional Leibniz rule  \eqref{katoponce}. For the case $q<1$
we proceed as follows:

 Consider $\phi,\,\tilde{\phi}\in C^\infty(\re)$ such that $0 \leq \phi \leq
 1$, $\text{supp}(\phi) \subset [0, \frac{1}{2}],$
 $\text{supp}(\tilde\phi)\subset[\frac{1}{4},4]$ and $\phi(r)
 + \phi(1/r) +\tilde \phi(r) = 1$ on $[0,\infty)$. Then, since $J^m(fg)$ is a bilinear pseudodifferential operator with symbol $(1+|\xi+\eta|^2)^{m/2},$ we get
 $$
J^m(fg) = T_{\sigma_1}(J^{m+s} f, g) + T_{\sigma_2}(f,J^{m+s} g)+T_{\sigma_3}(f,g),
$$
where
\begin{align*}
&\sigma_1(x,\xi, \eta):=  \left(1+|\xi+\eta|^2\right)^{m/2} \phi\left(\frac{1+|\xi|^2}{1+|\eta|^2} \right) (1+|\eta|^2)^{-(m+s)/2},\\
&\sigma_2(x,\xi, \eta):= \left(1+|\xi+\eta|^2\right)^{m/2} \phi\left(\frac{1+|\eta|^2}{1+|\xi|^2} \right)(1+|\xi|^2)^{-(m+s)/2},\\
 &\sigma_3(x,\xi, \eta):=  \left(1+|\xi+\eta|^2\right)^{m/2} \tilde\phi\left(\frac{1+|\xi|^2}{1+|\eta|^2} \right).
\end{align*}
The symbols $\sigma_1$ and $\sigma_2$ belong to the class $BS^{-s}_{1,0}$ ($1+|\xi+\eta|\sim 1+|\eta|$ and $1+|\xi+\eta|\sim 1+|\xi|,$ in the respective supports) and therefore Corollary~\ref{leibniz} imply
that
\[
\|fg\|_{W^{m,q}}\lesssim \|f\|_{W^{m+s,p_1}} \|g\|_{L^{p_2}}+\|f\|_{L^{p_1}}\|g\|_{W^{m+s,p_2}}+\|T_{\sigma_3}(f,g)\|_{L^q}
\]
 Since $1+|\xi+\eta|$ is not comparable to $1+|\eta|$ or $1+|\xi|$ in the support of $\sigma_3$, we cannot expect to prove that this symbol belongs to a suitable  class. We will then  split $\sigma_3$ into elementary symbols. Choose smooth cut-off functions $(\zeta^j)_{1\leq j\leq 3}$, such that $\widehat{\zeta^j}$ is supported on $B(0,4)\setminus B(0,1)$ and
\begin{align*}
\sigma_3(x,\xi,\eta) &= \sum_{l\geq 0} \sum_{l\geq k} 2^{km} \widehat{\zeta^3}\left(\frac{1+|\xi+\eta|^2}{2^{2k}} \right) \widehat{\zeta^1}\left(\frac{1+|\xi|^2}{2^{2l}} \right) \widehat{\zeta^2}\left(\frac{1+|\eta|^2}{2^{2l}} \right)\\
& =: \sum_{l\geq 0} \sum_{l\geq k} m_{k,l}(\xi,\eta).
\end{align*}
Now choose $\Psi^1,\Psi^2$ smooth functions verifying the same
support properties as the $\zeta^j$'s with
$\widehat{\Psi^j}\equiv 1$ on the support of
$\widehat{\zeta^j}$, so that
$$
T_{\sigma_3}(f,g) = \sum_{l\geq 0} \sum_{l\geq k} T_{m_{k,l}}(\Psi^1_{l}(f),\Psi^2_l(g)),
$$
where $\Psi_l$ stands for the usual dilation of $\Psi$ and we
identify $\Psi_l$ with the multiplier it produces. Now we focus
on $K_{k,l}$, the bilinear kernel of $T_{m_{k,l}}$, that is
$$
T_{m_{k,l}}(\Psi^1_{l}(f),\Psi^2_l(g))(x) = \int K_{k,l}(x-y,x-z) \Psi^1_{l}(f)(y) \Psi^2_l(g)(z) dydz.
$$
Then,
$$
\left|K_{k,l}(x-y,x-z)\right| \leq \left|\int e^{i((x-y)\xi+(x-z)\eta)} m_{k,l}(\xi,\eta) d\xi d\eta \right|.
$$
First we notice that $m_{k,l}$ is supported on the set
$\{(\xi,\eta),\ |\xi| \simeq |\eta| \simeq 2^l,\
|\xi+\eta|\simeq 2^{k}\}$ whose measure is bounded by
$2^{n(k+l)}$. After the change of variables $u:=(\xi+\eta)$ and
$v:=(\xi-\eta)$ we get
$$
\left|K_{k,l}(x-y,x-z)\right| \lesssim \left|\int e^{i((2x-y-z)u+(z-y)v)} m_{k,l}\left(\frac{u+v}{2},\frac{u-v}{2}\right) du dv \right|.
$$
Next, integration by parts and the bounds
$$ \left|\partial_u^\alpha \partial_v^\beta m_{k,l}\left(\frac{u+v}{2},\frac{u-v}{2}\right)\right| \lesssim 2^{km} 2^{-k|\alpha|} 2^{-l|\beta|},
$$
yield the following pointwise estimates for $K_{k,l}$
$$
|K_{k,l}(x-y,x-z)| \lesssim 2^{km} \frac{2^{n(k+l)}}{(1+2^k|2x-y-z|+2^l|z-y|)^{2n-s}}.
$$
By Lemma \ref{lem} below, with $m>n-s$, we deduce that
$$
\sum_{k=0}^l |K_{k,l}(x-y,x-z)| \lesssim 2^{lm} \frac{2^{2nl}}{(2^l|2x-y-z|+2^l|z-y|)^{2n-s}}.
$$
Consequently, since $|2x-y-z|+|z-y| \simeq |x-y|+|x-z|$, we get
\begin{align*}
 & |T_{\sigma_3}(f,g)(x)| \leq  \sum_{l\geq 0} \sum_{k=0}^l \left|T_{m_{k,l}}(\Psi^1_{l}(f),\Psi^2_{l}(g))(x)\right| \\
  & \lesssim \sum_{l\geq 0} \iint 2^{lm} \frac{2^{2nl}}{(2^l|y+z-2x|+2^l|y-z|)^{2n-s}} |\Psi^1_l(f)(y) \Psi^2_l(g)(z)| dy dz \\
 & \simeq \sum_{l\geq 0} \iint \frac{1}{(|y+z-2x|+|y-z|)^{2n-s}} 2^{l(m+s)}|\Psi^1_l(f)(y) \Psi^2_l(g)(z)| dy dz \\
 &\simeq \iint \frac{1}{(|y-x|+|z-x|)^{2n-s}} \sum_{l\geq 0}  2^{l(m+s)}|\Psi^1_l(f)(y) \Psi^2_l(g)(z)| dy dz \\
 & \leq \iint \frac{1}{(|y-x|+|z-x|)^{2n-s}} \left(\sum_l 2^{2l(m+s)}|\Psi^1_l(f)(y)|^2\right)^{\frac{1}{2}} \left(\sum_l |\Psi^2_l(g)(z)|^2 \right)^{\frac{1}{2}}\\
 & \simeq {\mathcal I}_s\left( \left(\sum_l 2^{2l(m+s)}|\Psi^1_l(f)|^2\right)^{\frac{1}{2}}, \left(\sum_l |\Psi^2_l(g)|^2 \right)^{\frac{1}{2}} \right)(x).
\end{align*}
 Then the proof follows from the boundedness of the bilinear
 operator ${\mathcal I}_s$ and Littlewood-Paley
 characterizations of Lebesgue spaces, since
 $p_1,p_2\in(1,\infty)$.
\end{proof}

\begin{lemma}\label{lem} For $l \in \na_0$, $a,b,s>0$ and $m, n \in \na_0$ with $m> n-s$, we have
$$
\sum\limits_{k=0}^l \frac{2^{k(m+n)}}{(a2^k + b)^{2n-s}} \lesssim \frac{2^{l(m+n)}}{(a2^l + b)^{2n-s}},
$$
where the implicit constants depend only on $n,m,$ and $s$.
\end{lemma}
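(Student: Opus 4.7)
The strategy is to bound each summand $R_k := 2^{k(m+n)}(a 2^k + b)^{-(2n-s)}$ by $R_l$ times a factor that decays geometrically in $l-k$, so that the entire sum collapses to a convergent geometric series times $R_l$. I do not anticipate needing any dyadic split (e.g., into the regimes $a2^k \lesssim b$ and $a2^k \gtrsim b$); a single two-sided comparison is enough to absorb both regimes in one shot.

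The key elementary input is
\[
a 2^k + b \;\le\; a 2^l + b \;\le\; 2^{l-k}\bigl(a 2^k + b\bigr), \qquad 0 \le k \le l,
\]
which follows from $a 2^l = 2^{l-k} \cdot a 2^k$ together with $b \le 2^{l-k} b$. In the regime relevant to Corollary~\ref{coro:leibniz} one has $s \in (0,2n)$, so $2n-s \ge 0$ and raising the inequality above to the $(2n-s)$-th power yields $(a 2^l + b)^{2n-s} \le 2^{(l-k)(2n-s)}(a 2^k + b)^{2n-s}$. Substituting into $R_k/R_l$ I would get
\[
\frac{R_k}{R_l} \;=\; 2^{-(l-k)(m+n)} \left(\frac{a 2^l + b}{a 2^k + b}\right)^{2n-s} \;\le\; 2^{-(l-k)(m-n+s)}.
\]
(If one wishes to cover $2n-s<0$ as well, the trivial monotonicity $(a2^l+b)^{2n-s} \le (a2^k+b)^{2n-s}$ gives the even better bound $R_k/R_l \le 2^{-(l-k)(m+n)}$, which is geometric whenever $m+n>0$.)

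The hypothesis $m > n-s$ is precisely what makes the exponent $m-n+s$ strictly positive, so the ratios $R_k/R_l$ decay geometrically as $l-k$ grows. Summing gives
\[
\sum_{k=0}^l R_k \;\le\; R_l \sum_{j=0}^l 2^{-j(m-n+s)} \;\le\; \frac{R_l}{1 - 2^{-(m-n+s)}},
\]
and the implicit constant depends only on $m$, $n$, $s$, as required. The main conceptual point—really the only obstacle—is recognizing that the comparison $(a2^l+b)/(a2^k+b)\le 2^{l-k}$ is the right one to use, since it is exactly strong enough that the quantitative form of $m>n-s$ converts the competing growth in $2^{k(m+n)}$ and decay in $(a2^k+b)^{-(2n-s)}$ into net geometric decay measured backward from $k=l$.
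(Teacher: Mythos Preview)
Your proof is correct and considerably cleaner than the paper's. The key observation that $a2^l+b\le 2^{l-k}(a2^k+b)$ for $0\le k\le l$ yields $R_k/R_l\le 2^{-(l-k)(m-n+s)}$ in one line, and the hypothesis $m>n-s$ then makes the sum geometric. Nothing more is needed in the range $2n-s>0$, which is the only regime used in Corollary~\ref{coro:leibniz} (there $s\in(0,2n)$).

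By contrast, the paper first normalizes $a\sim 2^{k_0}$, shifts the index of summation, and then performs a double case split according to whether $b\le 2^{k_0+l}$ or $b>2^{k_0+l}$, and within the first case a further split on $2^k\le b$ versus $2^k>b$. Each sub-sum is estimated separately and recombined. Your ratio argument bypasses all of this: it is indifferent to which of $a2^k$ or $b$ dominates because the inequality $a2^l+b\le 2^{l-k}(a2^k+b)$ holds uniformly in both regimes. What the paper's decomposition buys is perhaps a more hands-on feel for where the mass of the sum sits, but your approach is shorter, sharper, and makes the role of the hypothesis $m>n-s$ transparent. Your parenthetical on $2n-s<0$ is fine too (under $m+n>0$), though that regime never arises in the paper.
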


\begin{proof} Given $a > 0$, let $k_0 \in \ent$ such that $2^{k_0 -1} \leq a \leq 2^{k_0}$. Suppose first that $0 < b \leq 2^{k_0 +l}$ and write
\begin{align*}
& \sum\limits_{k=0}^l \frac{2^{k(m+n)}}{(a2^k + b)^{2n-s}}\simeq \sum\limits_{k=0}^l \frac{2^{k(m+n)}}{(2^{k+k_0} + b)^{2n-s}} \simeq \sum\limits_{k=k_0}^{l+k_0} \frac{2^{(k-k_0)(m+n)}}{(2^k + b)^{2n-s}} \\
& \simeq \mathop{\sum\limits_{k=k_0}}^{l+k_0}_{2^k \leq b} \frac{2^{(k-k_0)(m+n)}}{(2^k + b)^{2n-s}} + \mathop{\sum\limits_{k=k_0}}^{l+k_0}_{2^k > b} \frac{2^{(k-k_0)(m+n)}}{(2^k + b)^{2n-s}} \lesssim \mathop{\sum\limits_{k=k_0}}^{l+k_0}_{2^k \leq b} \frac{2^{(k-k_0)(m+n)}}{ b^{2n-s}} + \mathop{\sum\limits_{k=k_0}}^{l+k_0}_{2^k > b} \frac{2^{(k-k_0)(m+n)}}{2^{k(2n-s)}}\\
& \lesssim \frac{b^{m+n}}{b^{2n-s}} 2^{-k_0(m+n)} + 2^{-k_0(m+n)} 2^{(k_0+l)[m+n - (2n-s)]}\\
 & \lesssim \left( \frac{b}{2^{k_0}} \right)^{m+n-(2n-s)} 2^{-k_0(2n-s)}+  2^{-k_0(2n-s)} 2^{l [m+n - (2n-s)]} \lesssim  2^{-k_0(2n-s)} 2^{l [m+n - (2n-s)]}\\
& \simeq 2^{l(m+n)} \frac{1}{2^{(l+k_0)(2n-s)}} \simeq   \frac{2^{l(m+n)}}{(2^{l+k_0} + b )^{2n-s}}  \simeq   \frac{2^{l(m+n)}}{(a 2^{l} + b )^{2n-s}}.
\end{align*}
In the case  $b > 2^{k_0 +l}$ we do
\begin{align*}
& \sum\limits_{k=0}^l \frac{2^{k(m+n)}}{(a2^k + b)^{2n-s}}\simeq \sum\limits_{k=0}^l \frac{2^{k(m+n)}}{(2^{k+k_0} + b)^{2n-s}} \simeq \sum\limits_{k=k_0}^{l+k_0} \frac{2^{(k-k_0)(m+n)}}{(2^k + b)^{2n-s}}\\
& \lesssim \frac{1}{ b^{2n-s}} \sum\limits_{k=k_0}^{l+k_0} 2^{(k-k_0)(m+n)} \simeq  \frac{2^{l(m+n)}}{ b^{2n-s}} \simeq \frac{2^{l(m+n)}}{ (2^{k_0+l} + b)^{2n-s}} \simeq \frac{2^{l(m+n)}}{ (a2^{l} + b)^{2n-s}}.
\end{align*}

\end{proof}

\begin{remark} A shorter proof of  Corollary~\ref{coro:leibniz} for $q<1$ can be obtained as follows if we assume $m>c_n$ where $c_n$ is as in Remark~\ref{numberderivatives} (note that $c_n>n-s$).
Consider $\phi\in C^\infty(\re)$  such that $0 \leq \phi \leq 1$, $\text{supp}(\phi) \subset [-2, 2]$ and $\phi(r) + \phi(1/r) = 1$ on $[0,\infty)$
and write
$$
J^m(fg) = T_{\sigma_1}(J^{m+s} f, g) + T_{\sigma_2}(f,J^{m+s} g),
$$
where
$$
\sigma_1(\xi, \eta)= (1+\abs{\xi+\eta}^2)^{m/2} \phi\left(\frac{1+|\xi|^2}{1+|\eta|^2} \right) (1+|\eta|^2)^{-(m+s)/2}
$$
and
$$
\sigma_2(\xi, \eta)= (1+\abs{\xi+\eta}^2)^{m/2} \phi\left(\frac{1+|\eta|^2}{1+|\xi|^2} \right) (1+|\xi|^2)^{-(m+s)/2}.
$$
By Remark \ref{numberderivatives} we can use Theorem~\ref{thm:pdobounds} and conclude that  $T_{\sigma_1}$ and $T_{\sigma_2}$ are bounded from $L^{p_1}\times L^{p_2}$ into $L^q$ if $m> c_n$  and therefore
\begin{align}\label{mlarge}
\|T_{\sigma_1}(J^{m+s}f,g)\|_{L^q} \lesssim \|f\|_{W^{m+s,p_1}}\|g\|_{L^{p_2}},\quad m> c_n,\\
\|T_{\sigma_2}(J^{m+s}f,g)\|_{L^q} \lesssim \|f\|_{L^{p_1}}\|g\|_{W^{m+s,p_2}},\quad m >  c_n,\nonumber
\end{align}
from which the desired result follows.
\end{remark}

\item {\bf Paraproduct estimates under Sobolev scaling.} Let $n\in \na,$ $s\in(0,2n),$ $p_1,p_2\in(1,\infty)$ and $q>0$ such that
$ \frac{1}{q}=\frac{1}{p_1}+\frac{1}{p_2}-\frac{s}{n}.$
Consider a radial, real-valued function  $\varphi\in\mathcal{S}(\re^n)$ such that $\hat{\varphi}(\xi)=1$ for $\abs{\xi}\le 1$ and $\varphi(\xi)=0$ for  $\abs{\xi}\ge 3/2.$ Let  $\psi$ be given by $\hat{\psi}(\xi)=\hat{\varphi}(\xi/2)-\hat{\varphi}(\xi).$
 For $f\in L^1(\re^n)$ we set
 \[
 S_j(f):=\varphi_j* f\qquad \text{and}\qquad \Delta_j(f):=S_{j+1}(f)-S_j(f),
 \]
 where $\varphi_j(x)=2^{jn}\varphi(2^j x),$ $j\in\ent.$ We also define $\psi_j(x):=2^{jn}\psi(2^jx)$ and note that $\text{supp}(\widehat{\psi_j})\subset\{\xi:2^j\le \abs{\xi}\le 3\,2^j\}.$ For $f,\,g\in\mathcal{S}(\re^n)$ we define the Bony paraproduct   of $f$ and $g$ by
\[
 \Pi(f,g):= \sum_{j\in\ent}\Delta_j(f)S_{j-1}(g).
 \]
Straightforward computations show that
$fg= \Pi(f,g)+\Pi(g,f)+\sum_{m=-1}^1R_m(f,g), $ where
$R_m(f,g)=\sum_{j\in\ent}\Delta_j(f)\Delta_{j+m}(g)$ for $m=-1,0,1.$

The symbol $\sigma$ of the paraproduct $\Pi$ is $x$-independent,
\[
\Pi(f,g)(x)=\int_{\re^n}\int_{\re^n} \sigma(\xi,\eta) \hat{f}(\eta)\hat{g}(\xi) e^{ix(\xi+\eta)}\,d\eta\,d\xi,
\]
is given by
\[
\sigma(\xi,\eta)=\sum_{j\in\ent}\widehat{\psi_j}(\xi)\widehat{\varphi_{j-1}}(\eta),
\]
and belongs to the class $\dot{BS}^{0}_{1,0}.$ As a consequence of Corollary~\ref{leibniz}, we have
\begin{equation*}
\|\Pi(f,g)\|_{L^q}\lesssim \|f\|_{\dot{W}^{s,p_1}}\|g\|_{L^{p_2}}+\|f\|_{L^{p_1}}\|g\|_{\dot{W}^{s,p_2}}.
\end{equation*}

\item {\bf Lowering the exponents for linear embeddings.} It is well-known that in $\re^n$, for $s\in (0,1)$, $W^{s,p}$ is
continuously embedded into $L^q$ as soon as $p< d/s$ and $q\geq 1$ with$$ \frac{1}{q}=\frac{1}{p}-\frac{s}{d}.$$

By the previous approach, we get bilinear analogs: indeed we have proved
that $(f,g)\rightarrow fg$ is continuous from $W^{s,p_1} \times W^{s,p_2}$
into $L^q$ as soon as $p<d/s$ (where $p$ is the harmonic mean value of
$p_1,p_2$) and $q>1/2$.
It is then possible to use this bilinear approach to give extensions of
the linear inequalities for $q<1$.

\begin{prop} Let consider $s\in(0,1)$ and $p=t/2< d/s$ and $q\leq 1$ with
$$ \frac{1}{q}=\frac{1}{p}-\frac{s}{d}=\frac{2}{t}-\frac{s}{d}.$$
Then for every nonnegative smooth function $h$, we have
$$ \|h \|_{L^q} \lesssim \| h^{1/2} \|_{W^{s,t}}^2 =
\|h^{1/2}\|_{W^{s,2p}}^2.$$
\end{prop}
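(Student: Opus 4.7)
The plan is to deduce the inequality from the bilinear Sobolev embedding
\begin{equation*}
\|fg\|_{L^q}\lesssim \|f\|_{W^{s,p_1}}\|g\|_{W^{s,p_2}},\qquad \tfrac{1}{q}=\tfrac{1}{p_1}+\tfrac{1}{p_2}-\tfrac{s}{d},
\end{equation*}
valid for $p_1,p_2\in(1,\infty)$ and $q>1/2$, by specializing to $f=g=h^{1/2}$ with $p_1=p_2=t=2p$; this immediately produces $\|h\|_{L^q}=\|h^{1/2}\cdot h^{1/2}\|_{L^q}\lesssim \|h^{1/2}\|_{W^{s,t}}^{2}$. The hypothesis $p<d/s$ is given, while $q>1/2$ follows from $q\le 1$ and $t>1$, which together force $1\le 1/q=2/t-s/d<2$.

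For the bilinear embedding itself I would use Bony's paraproduct decomposition $fg=\Pi(f,g)+\Pi(g,f)+\sum_{m=-1}^{1}R_m(f,g)$ and handle the three types of pieces separately. For the paraproduct $\Pi(f,g)$, observe that $\Pi(f,g)=T_\tau(J^s f,g)$ with $\tau(\xi,\eta)=\sigma_\Pi(\xi,\eta)(1+|\xi|^2)^{-s/2}$. The frequency support of $\sigma_\Pi$ forces $|\eta|\lesssim|\xi|$, so on that support $(1+|\xi|^2)^{-s/2}\sim(1+|\xi|+|\eta|)^{-s}$ and hence $\tau\in BS^{-s}_{1,0}$; Theorem~\ref{thm:pdobounds}(a) then yields
\begin{equation*}
\|\Pi(f,g)\|_{L^q}\lesssim \|J^s f\|_{L^{p_1}}\|g\|_{L^{p_2}}\le \|f\|_{W^{s,p_1}}\|g\|_{W^{s,p_2}},
\end{equation*}
and the symmetric argument disposes of $\Pi(g,f)$.

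The main obstacle is the resonant term $R(f,g)=\sum_m\sum_j \Delta_j f\,\Delta_{j+m}g$, whose symbol $\sum_j \widehat{\psi_j}(\xi)\widehat{\psi_{j+m}}(\eta)$ is localised on $|\xi|\sim|\eta|$ but leaves $|\xi+\eta|$ unconstrained, so that it does \emph{not} belong to $BS^{-s}_{1,0}$. I would mimic the treatment of $T_{\sigma_3}$ in the proof of Corollary~\ref{coro:leibniz}: decompose $R$ into elementary pieces $m_{k,l}$ supported on $\{|\xi|\simeq|\eta|\simeq 2^l,\ |\xi+\eta|\simeq 2^k\}$ with $k\le l$, integrate by parts in the bilinear kernel, sum in $k$ via Lemma~\ref{lem}, and arrive at the pointwise domination
\begin{equation*}
|R(f,g)(x)|\lesssim \mathcal{I}_s\!\left(\bigl({\textstyle\sum_l 2^{2ls}|\Psi_l f|^2}\bigr)^{1/2},\bigl({\textstyle\sum_l |\Psi_l g|^2}\bigr)^{1/2}\right)\!(x).
\end{equation*}
The bilinear Riesz potential bound of Theorem~\ref{fracint}(a) (valid for $p_1,p_2\in(1,\infty)$ and $q>0$) together with the standard Littlewood-Paley characterisations of $W^{s,p_1}$ and $L^{p_2}$ then close the estimate. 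The key point, and what permits the passage to the quasi-Banach range $q<1$ where no H\"older-type step is available, is precisely this reduction to a pointwise $\mathcal{I}_s$ bound, which is the same mechanism that unlocked the $q<1$ range of Corollary~\ref{coro:leibniz}.
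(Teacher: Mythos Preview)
Your reduction is exactly the paper's proof: write $h=h^{1/2}\cdot h^{1/2}$ and invoke the bilinear embedding $\|fg\|_{L^q}\lesssim\|f\|_{W^{s,p_1}}\|g\|_{W^{s,p_2}}$ with $f=g=h^{1/2}$ and $p_1=p_2=t$. The paper stops there, simply citing this embedding as already established by ``the previous approach''.

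Your additional sketch of the embedding, however, contains a misstep in the handling of the resonant piece. You assert that the symbol $\sigma_R(\xi,\eta)=\sum_j\widehat{\psi_j}(\xi)\widehat{\psi_{j+m}}(\eta)$, being unconstrained in $|\xi+\eta|$, forces you into the elaborate $T_{\sigma_3}$/Lemma~\ref{lem} machinery of Corollary~\ref{coro:leibniz}. That route does \emph{not} go through here: Lemma~\ref{lem} requires $m>n-s$, whereas the relevant order in your situation is $m=0$, and since $s<1\le n$ the hypothesis fails and the sum over $k$ diverges. Fortunately the detour is unnecessary. At any $(\xi,\eta)$ only $O(1)$ summands of $\sigma_R$ are nonzero, each localized where $|\xi|\sim|\eta|\sim 2^j$, so $|\partial_\xi^\alpha\partial_\eta^\beta\sigma_R|\lesssim(|\xi|+|\eta|)^{-|\alpha|-|\beta|}$ and $\sigma_R\in\dot{BS}^0_{1,0}$; moreover on its support $1+|\xi|\sim 1+|\xi|+|\eta|$, so $\sigma_R\,(1+|\xi|^2)^{-s/2}\in BS^{-s}_{1,0}$ by \emph{exactly} the same reasoning you used for $\Pi$. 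Thus $R(f,g)=T_\rho(J^sf,g)$ with $\rho\in BS^{-s}_{1,0}$, and Theorem~\ref{thm:pdobounds}(a) disposes of it just as it did the paraproducts. Even more directly, the full product has symbol $\sigma\equiv 1\in BS^0_{1,0}$, so Corollary~\ref{leibniz}(a) with $m=0$ yields the bilinear embedding in one stroke---no paraproduct decomposition is needed at all.
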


\begin{proof} We just write $h =h^{1/2} h^{1/2}$ and apply the bilinear
inequalities to the functions $f=g=h^{1/2}$ with the exponents
$p_1=p_2=t$.
\end{proof}

Such inequalities are of interest since they allow for an exponent $q\leq 1$. To do that we have to pay the cost of estimating the regularity of $\sqrt{h}$.

\end{enumerate}

\section{Acknowledgement}

The authors would like to thank the anonymous referee for his/her careful reading of the manuscript and useful corrections.

\end{document}